\newtheorem{tw}{Theorem}[subsection]
\newtheorem{lm}[tw]{Lemma}
\newtheorem{wn}[tw]{Corollary}
\newtheorem{stw}[tw]{Proposition}
\newenvironment{dow}{\it Proof.\rm}{\hfill $\Box$}
\theoremstyle{definition}
\newtheorem*{df}{Definition}
\newtheorem{uw}[tw]{Remark}
\newtheorem{prz}[tw]{Example}
\newcommand{\BN}{{\mathbb N}}
\newcommand{\BR}{{\mathbb R}}
\newcommand{\WW}{{\mathcal W}}
\newcommand{\FF}{{\mathcal{F}}}
\newcommand{\HH}{{\mathcal{H}}}
\newcommand{\BB}{{\mathcal{B}}}
\newcommand{\LL}{{\mathcal{L}}}
\newcommand{\MM}{{\mathcal{M}}}
\newcommand{\PP}{{\mathcal{P}}}
\newcommand{\RR}{{\mathcal{R}}}
\newcommand{\EE}{{\mathcal{E}}}
\newcommand{\bX}{{\mathbf{X}}}
\newcommand{\BRD}{{\mathbb{R}^{d}}}
\newcommand{\nsubsection}{\setcounter{equation}{0}\subsection}
\begin{document}
\title{Semi-Dirichlet forms, Feynman-Kac functionals and the
Cauchy problem for semilinear parabolic equations}
\author {Tomasz Klimsiak \smallskip\\
{\small Faculty of Mathematics and Computer Science, Nicolaus
Copernicus University} \\ {\small  Chopina 12/18,
87--100 Toru\'n, Poland}\\
{\small e-mail: tomas@mat.uni.torun.pl}}
\date{}
\maketitle
\begin{abstract}
In the first part of the paper we prove various results on
regularity of  Feynman-Kac functionals of Hunt processes
associated with time dependent semi-Dirichlet forms. In the second
part we study the Cauchy problem for semilinear parabolic
equations with measure data involving operators associated with
time-dependent forms. Model examples are non-symmetric divergence
form operators and fractional laplacians with possibly variable
exponents. We first introduce a definition of a solution
resembling Stampacchia's definition in the sense of duality and
then, using the results of the first part, we prove the existence,
uniqueness and regularity of solutions of the problem under mild
assumptions on the data.
\end{abstract}

\footnotetext{{\em Mathematics Subject Classifications (2010):}
Primary: 35K58; Secondary; 35K90, 60H30.}

\footnotetext{{\em Key words or phrases:} Semi-Dirichlet form,
Feynman-Kac functional, Semilinear parabolic equation, measure
data.}

\footnotetext{Tel.: +48-566112954,
E-mail: tomas@mat.umk.pl}

\nsubsection{Introduction}

Let $E$ be a locally compact separable metric space, $m$ be an
everywhere dense Borel measure on $E$  and let
$\{B^{(t)};t\in\BR\}$ be a family of regular semi-Dirichlet forms
on $L^2(E;m)$ with common domain $F$.
Let us consider a time-dependent semi-Dirichlet form
\[
\mathcal{E}(u,v)=\left\{
\begin{array}{l}(-\frac{\partial u}{\partial t},v)+\BB(u,v),
\quad(u,v)\in\WW\times L^2(0,T;F),\smallskip \\
(u,\frac{\partial v}{\partial t})+\BB(u,v),\quad(u,v)\in
L^2(0,T;F)\times\WW,
\end{array}
\right.
\]
where $\WW=\{u\in L^2(0,T;F);\,\frac{\partial u}{\partial t}\in
L^2(0,T;F')\}$, $(\cdot,\cdot)$ stands for the duality pairing
between $L^2(0,T;F)$ and $L^2(0,T;F')$, and
\[
\BB(u,v)=\int_\BR B^{(t)}(u(t),v(t))\,dt.
\]
Let $\mathbb{M}=(\{\bX_t,t\ge0\},\,\{P_z,z\in E\times\BR\})$ be a
Hunt process with life-time $\zeta$ properly associated with
$\EE$. The main object of the present paper is to study regularity
of the Feynman-Kac functionals of the form
\begin{equation}
\label{eq1.01}
u(z)=E_z\mathbf{1}_{\{\zeta>T-\tau(0)\}}\varphi(\bX_{T-\tau(0)})
+E_z\int_0^{\zeta_\tau}dA_r^\mu,\quad z\in
E_{0,T}\equiv(0,T]\times E.
\end{equation}
Here $E_z$ denotes the expectation with respect to $P_z$,
$\zeta_\tau=\zeta\wedge(T-\tau(0))$, where $\tau$ is the uniform
motion to the right, $\varphi:E\rightarrow\BR$ and $A^{\mu}$ is
the additive functional of $\mathbb{M}$ in Revuz correspondence
with a smooth measure $\mu$ on  $E_{0,T}$.

Our interest in functionals of the form (\ref{eq1.01}) comes from
the fact that  regularity of $u$ implies  regularity of solutions
of the Cauchy problem
\begin{equation}
\label{eqi.2} -\frac{\partial u}{\partial t}-L_tu=\mu,\quad
u(T)=\varphi,
\end{equation}
where $L_t$ is the operator associated with the form $B^{(t)}$.
The study of equations of the form (\ref{eqi.2}) and more general
semilinear equations of the form
\begin{equation}
\label{eqi.3} -\frac{\partial u}{\partial t}
-L_tu=f(t,x,u)+\mu,\quad u(T)=\varphi
\end{equation}
is the second main goal of the paper. We are interested in
equations with $\varphi\in L^1(E;m)$ and ``true'' measure data.
Therefore in the paper we assume that $\mu$ belongs to the space
$\mathcal{R}(E_{0,T})$  of all smooth (with respect to the
capacity determined by $\EE$) measures on $E_{0,T}$ such that
$E_zA_{\zeta_\tau}^{|\mu|}<\infty$ for $\EE$-quasi-every (q.e.)
$z\in E_{0,T}$, and that $\delta_{\{T\}}\otimes\varphi\cdot
m\in\mathcal{R}(E_{0,T})$. These are minimal assumptions on
$\mu,\varphi$ under which $u$ is finite $m_1$-a.e., and hence
finite $\EE$-q.e. The class $\mathcal{R}(E_{0,T})$ is quite wide.
If $\EE$ satisfies some duality condition (see condition
$(\Delta$) below) then it includes the space $\MM_{0,b}(E_{0,T})$
of all bounded smooth measures on $E_{0,T}$. Our general framework
of time-dependent semi-Dirichlet forms associated with the family
of semi-Dirchlet forms allows us to study (\ref{eqi.2}),
(\ref{eqi.3}) for wide class of local and nonlocal operators
$L_t$. Model examples are diffusion operators with drift terms and
fractional laplacians with constant and variable exponents (for
more examples see \cite{Fukushima,KR:JFA,KR:SM,MR,Oshima}). We
think that applicability of our general results to parabolic
equations with measure data involving nonlocal operators is of
particular interest, because to our knowledge, with the exception
of \cite{KR:JEE}, no such result has appeared in the literature.

Large majority of known  results on the regularity of $u$ given by
(\ref{eq1.01}) concerns the case where $\mu=g\cdot m_1$. One can
roughly divide them into two groups. In the first group of results
one shows that $u$ is continuous and then  that it is a viscosity
solution of (\ref{eqi.2}). To show this one assumes that
$\varphi,g$ are continuous with polynomial growth and $L_t$ is a
non-divergent form diffusion operator or L\`evy type operator with
diffusion part in the non-divergent form with Lipschitz continuous
coefficients (see, e.g., \cite{BBP,Pardoux}). The results of the
second group say that $u$ is a Sobolev space weak (in the
variational sense) solution of (\ref{eqi.2}). In the known results
$f,\varphi$ are assumed to be square integrable and $L_t$ is a
diffusion operator with regular coefficients (see \cite{BM,BL}),
uniformly elliptic diffusion operator with measurable coefficients
(see \cite{BPS,Lejay,Ro:PTRF}) or L\`evy type operator whose
diffusion part has regular coefficients (see \cite{Situ}). In
\cite{ZR} diffusion operators with singular coefficients are
considered. However, in the case considered in \cite{ZR} the
regularity of $u$ follows from that for diffusion with no singular
part and from the stochastic representation of the divergence (see
\cite{Kl:JTP,Ro:Stochastics,Stoica}).

In \cite{KR:JEE}  regularity of $u$ given by (\ref{eq1.01}) and
connections of (\ref{eq1.01}) with  solutions of (\ref{eqi.2}) are
investigated in case $L_t$ is a uniformly elliptic divergence form
operator, $\varphi\in L^1(E;m)$ and  $\mu$ is a general bounded
smooth measure. We generalize considerably  these results. The
remarkable feature of \cite{KR:JEE} and the present paper is that
in both papers the regularity of Feynman-Kac functionals of the
form (\ref{eq1.01}) plays an important role in the proof of their
connections with PDEs. Secondly, as in \cite{KR:JEE}, in the
present paper the proof of regularity of Feynman-Kac functionals
(and hence of solutions to related PDEs) relies purely on the
theory of Dirichlet forms. In the existing literature the proofs
of regularity of Feynman-Kac functionals are usually based on
results on stochastic flows (in the case of regular coefficients)
or on regularity results from the theory of PDEs combined with
approximation methods based on regularization of the data involved
in the functional.

In the first part of the paper we prove that in  general, if
$\varphi\in L^1(E;m)$ and $\mu\in\RR(E_{0,T})$, then $u$ given by
(\ref{eq1.01}) is quasi-l.s.c and quasi-c\`adl\`ag, and if $u\in
L^2(E_{0,T};m_1)$, where $m_1=dt\otimes m$, then $(0,T]\ni
t\mapsto u(t)\in L^2(E;m)$ is c\`adl\`ag. If $A^\mu$ is continuous
then $u$ is quasi-continuous, and if moreover $u\in
L^2(E_{0,T};m_1)$, then $(0,T]\ni t\mapsto u(t)\in L^2(E;m)$ is
continuous. We also show that if the following duality condition
is satisfied:
\begin{enumerate}
\item[$(\Delta)$]  for some
$\alpha\ge 0$ there exists a nest $\{F_n\}$ on $E_{0,T}$ such that for  every
$n\ge 1$ there is a non-negative $\eta_n \in L^2(E_{0,T};m_1)$
such that $\eta_n>0$ $m_1$-a.e. on $F_n$ and
$\hat{G}_\alpha^{0,T}\eta_n$ is bounded,
\end{enumerate}
where $\hat{G}_\alpha^{0,T}$ is the adjoint operator to the
resolvent $G_\alpha^{0,T}$ of the operator
$-\frac{\partial}{\partial t}-L_t$, then
\begin{equation}
\label{eq1.05} \MM_{0,b}(E_{0,T})\subset\mathcal{R}(E_{0,T}).
\end{equation}
Condition $(\Delta)$ is satisfied for instance if $\alpha\hat
G^{0,T}_{\gamma+\alpha}$ is Markovian for some $\gamma\ge0$. From
(\ref{eq1.05}) it follows in particular that if $\varphi\in
L^1(E;m)$ then $\delta_{\{T\}}\otimes\varphi\cdot
m\in\mathcal{R}(E_{0,T})$. We next prove some energy estimates for
$u$. To this end, we first prove that if $\varphi\in L^2(E;m)$ and
$\mu\in S_0(E_{0,T})$, i.e. $\mu$ is a finite energy measure on
$E_{0,T}$, then $u\in L^2(0,T;F)$ and $u$ is a weak solution of
(\ref{eqi.2}) in the variational sense. We then use this result to
show that if $\varphi\in L^1(E;m)$, $\mu\in\MM_{0,b}(E_{0,T})$ and
for some $\gamma\ge 0$ the form
$\EE_{\gamma}=\EE+\gamma(\cdot,\cdot)_L^2$ has the dual Markov
property then $u\in L^1(E_{0,T};m_1)$, $T_k(u)=((-k)\vee u)\wedge
k \in L^2(0,T;F)$ for every $k\ge 0$ and
\[
\int_0^TB^{(t)}_\gamma(T_k(u)(t),T_k(u)(t))\,dt\le k(\|\mu\|_{TV}
+\|\varphi\|_{L^1}+\gamma\|u\|_{L^1}).
\]

In the second part of the paper we study the Cauchy problems
(\ref{eqi.2}), (\ref{eqi.3}). Before describing briefly our main
results let us mention that one delicate issue one encounters when
considering (\ref{eqi.2}), (\ref{eqi.3}) with measure data is to
give proper definition of a solution. This is caused by the fact
that even in the linear case the distributional solution may be
not unique (see \cite{Serrin} for a suitable example of linear
equation with uniformly elliptic divergence form operator). The
problem of existence and uniqueness of solutions of equations with
measure data was first addressed in Stampacchia's paper
\cite{Stampacchia} devoted to the Dirichlet problem for  elliptic
equations with uniformly elliptic divergence form operator. To
overcame the difficulty with the uniqueness of solutions
Stampacchia introduced the so-called solutions  by the method of
duality and showed that in his class of solutions the problem is
well posed. A drawback to the original Stampacchia's definition of
solutions, and perhaps the main reason why the theory of solutions
by duality have not been developed, is that it applies mainly to
linear equations. In the early nineties of the last century the
so-called entropy and renormalized solutions were introduced (see,
e.g., \cite{BBGGPV,DMOP} and the references therein), and an
extensive study of nonlinear equations with measure data and local
operators began. For a selection of important results on the
subject we refer the reader to \cite{BBGGPV,DMOP} (elliptic
equations) and \cite{DPP,PPP} (parabolic equations).

In the present paper by a solution to (\ref{eqi.2}) we mean $u$
satisfying (\ref{eq1.01}). In case ($\Delta$) is satisfied we show
that equivalently  $u$ can be defined as a measurable function on
$E_{0,T}$ satisfying the equation
\begin{equation}
\label{eq1.04} (u,\eta)_{L^2(E_{0,T};m_1)}
=(\varphi,(\hat{G}^{0,T}\eta)(T))_{L^2(E;m)}
+\int_{E_{0,T}}\hat{G}^{0,T}\eta\,d\mu
\end{equation}
for every non-negative $\eta \in L^2(E_{0,T};m_1)$ such that
$\hat{G}_0^{0,T}\eta$ is bounded. It follows in particular that
under ($\Delta$) there is at most one $u$ satisfying
(\ref{eq1.04}). The definition of a solution to (\ref{eq1.01}) via
(\ref{eq1.04}) resembles Stampacchia's definition given in
\cite{Stampacchia}. In case of local operators, it  coincides with
the original definition from \cite{Stampacchia}. Note also that
our definition (\ref{eq1.04}) extends to the parabolic case and
semi-Dirichlet forms the definition introduced in \cite{KR:JFA}
(see also \cite{KR:SM}) in case  of elliptic equations with
measure data involving operators associated with Dirichlet forms.

In the semilinear case the definitions of solutions are similar to
those in the linear case. We call a measurable
$u:E_{0,T}\rightarrow\BR$ a solution to (\ref{eqi.3}) if
(\ref{eq1.01}) is satisfied with $\mu$ replaced by $f_u\cdot
m+\mu$, where $f_u=f(\cdot,\cdot,u)$. In case $(\Delta)$ is
satisfied, $u$ is a solution of (\ref{eqi.3}) if $f_u\in
L^1(E_{0,T};m_1)$  and (\ref{eq1.04}) is satisfied with $\mu$
replaced by $f_u\cdot m+\mu$.  We prove the existence and
uniqueness of solutions to  (\ref{eqi.3}) for $f$ satisfying the
monotonicity condition, continuous with respect to $u$ and such
that $f(\cdot,\cdot,0)\in \mathcal{R}(E_{0,T})$ and
\begin{equation}
\label{eqi.4} \forall_{y\in\BR}\quad f(t,x,y)\in
qL^1(E_{0,T};m_1),
\end{equation}
where $qL^1(E_{0,T};m_1)$ is the space of quasi-integrable
functions on $E_{0,T}$ (see Section 3). Let us note that equations
of the form (\ref{eqi.3}) with local operators (nonlinear of
Leray-Lions type) were considered in \cite{BBGGPV,BG}. In these
papers it is assumed that $f$ satisfies stronger than
(\ref{eqi.4}) growth condition
\begin{equation}
\label{eqi.5} \forall_{r\ge 0}\quad E_{0,T}\ni(t,x)\mapsto
\sup_{|y|\le r}|f(t,x,y)|\in L^1(E_{0,T};m_1).
\end{equation}
Elliptic problems with Laplace operator and right-hand side
satisfying weak growth condition of the form  (\ref{eqi.4}) were
considered in \cite{OP} for $f$ independent of $x$ and in
\cite{Kl:AMPA} for diagonal systems. Let us also mention the
papers \cite{BS,Blasio} in which $L$ (independent of $t$) is
assumed to be accretive on $L^1(E_{0,T};m_1)$, $\mu\in
L^1(E_{0,T};m_1)$ and $f$ satisfies some condition which implies
(\ref{eqi.5}). It is worth noting that except for \cite{Kl:AMPA}
in  all the mentioned papers $f(\cdot,\cdot,0), \mu$ are assumed
to be in $L^1(E_{0,T},m_1)$ or in $\MM_{0,b}(E_{0,T})$. In the
present paper we  consider the class $\mathcal{R}(E_{0,T})$, which
for some classes of operators defined on bounded smooth domains
$D\subset\BR^d$ includes weighted Lebesgue spaces
$L^1(D_{0,T};\delta^{\alpha}\cdot m_1)$ for some $\alpha\ge 0$,
where $\delta(x)=\mbox{dist}(x,\partial D )$. These classes of
spaces are important in applications to elliptic systems (see
\cite{QS}).

Finally, let us note that in the paper we assume that
$\{B^{(t)}\}$ appearing in the definition of $\EE$ is a family of
regular semi-Dirichlet forms. However, at the end of Section
\ref{sec4} we show that in the case where $\{B^{(t)}\}$ is a
family of non-negative quasi-regular Dirichlet forms one can apply
the so-called transfer method to the form $\EE$. Therefore all the
results of the paper on regularity of (\ref{eq1.01}) and solutions
of (\ref{eqi.2}), (\ref{eqi.3}) also hold true under the last
assumption on $\{B^{(t)}\}$.

\nsubsection{Preliminaries}
\label{sec2}

In the paper $E$ denotes a locally compact separable metric space
and  $m$ denotes an everywhere dense measure on the Borel
$\sigma$-algebra $\BB(E)$.

Let  $F$ be a dense subspace of $H\equiv L^{2}(E; m)$ and $B:
F\times F\rightarrow\BR$ be a bilinear form. We say that $B$ is
closed  on $F$ if
\begin{enumerate}
\item[(B1)]there exists $\alpha_{0}\ge 0$ such that
\[
B_{\alpha_{0}}(u, u)\ge 0, \quad u\in F,
\]
where $B_{\alpha_{0}}(u, v)=B(u, v)+\alpha_{0}(u,v)_{L^{2}}$,
\item[(B2)]there exists $K\ge 0$ such that
\[
|B(u,v)|\le K B_{\alpha_{0}}(u, u)^{1/2}B_{\alpha_{0}}(v,v)^{1/2},
\quad u,v\in F,
\]
\item[(B3)] $F$ is a Hilbert space with the inner product
\[
(u,v)_{F}\equiv \frac12 (B_{\alpha_{0}}(u,v) +B_{\alpha_{0}}(v,u)).
\]
\end{enumerate}
We say that $B$  has the Markov property if
\begin{enumerate}
\item[(B4)]for all $u\in F$ and  $a\ge 0$, $u\wedge a\in F$ and
$B(u\wedge a, u-u\wedge a)\ge 0$.
\end{enumerate}
We say that $B$ has the dual Markov property if
\begin{enumerate}
\item[($\hat{\mbox{B}}$4)]for all $u\in F$ and $a\ge 0$, $u\wedge a\in F$ and
$ B(u-u\wedge a,u\wedge a)\ge 0$.
\end{enumerate}

We say that a form $(B,F)$ is a Dirichlet form if it is closed and
has  the Markov property (B4). A Dirichlet form $(B,F)$ is called
non-negative if $\alpha_{0}=0$.

It is known (see \cite[Theorem 1.1.5]{Oshima}) that if $(B,F)$ is
a Dirichlet form then (B4) is equivalent to the following
condition: $\alpha G_{\alpha}^{0}$ is Markovian for every
$\alpha>0$, i.e. if $0\le f\le 1$ then $0\le\alpha
G_{\alpha}^{0}f\le 1$, where $\{\alpha G_{\alpha}^{0},
\alpha>\alpha_{0}\}$ is the resolvent associated with $(B,F)$.

We say that $(B,F)$ is regular if there exists a subset
$\mathcal{C}$ of the space $C_0(E)$ of continuous functions on $E$
with compact support such that $F\cap\mathcal{C}$ is
$B_{\alpha_{0}}$-dense in $F$ and  dense in $C_{0}(E)$ with
uniform norm.

For $k\ge 0$ put
\[
T_{k}(u)=\max\{\min\{u, k\}, -k\}, \quad u\in\BR.
\]
Then for every $\alpha>\alpha_{0}$ and $u\in F$,
\[
\alpha(T_{k}(u)- \alpha G_{\alpha}^{0} T_{k}(u),
u-T_{k}(u))_{L^{2}}\ge 0,
\]
because $-k\le \alpha G_{\alpha}^{0} T_{k}(u)\le k$ by the
Markovovian property of $\alpha G_{\alpha}^{0}$. By Theorems 1.1.4
and  1.1.5 in \cite{Oshima} the above inequality implies that
\begin{enumerate}
\item[(B4a)]$T_{k}(u)\in F$ for every $u\in F$ and
\[
B(T_{k}(u), T_{k}(u))\le B(T_{k}(u), u).
\]
\end{enumerate}

It follows that if $(B, F)$ is closed then condition (B4) is
equivalent to (B4a). Similarly, if $(B, F)$ is closed then
$(\hat{\mbox{B}}4)$ is equivalent to
\begin{enumerate}
\item[($\hat{\mbox{B}}$4a)]$T_{k}(u)\in F$ for every $u\in F$ and
\[
B(T_{k}(u), T_{k}(u))\le B(u, T_{k}(u)).
\]
\end{enumerate}

In what follows $E^{1}=\BR\times E$, $m_{1}=\lambda^{1}\otimes m$
and $\lambda^{1}$ is the Lebesgue measure on $\BR$. We set
$\FF=L^{2}(\BR;F)$, $\FF_{0,T}=L^{2}(0,T;F)$,
$\FF_{T}=L^{2}(-\infty,T;F)$ and $\HH=L^{2}(E^{1};m_{1})$,
$\HH_{0, T}=L^{2}(0,T;H)$, $\HH_{T}=L^{2}(-\infty,T;H)$. Let
$\FF'=L^2(\BR;F')$ denotes the dual space to $\FF$. We set
\[
\WW=\{u\in\FF; \frac{\partial u}{\partial t}\in\FF'\}
\]
and define $\WW_{T}, \WW_{0,T}$ analogously to $\WW$ but with
$\FF$ replaced  by $\FF_{T}, \FF_{0, T}$, respectively. For
$u\in\WW$ we put
\[
\|u\|_{\WW}=\|\frac{\partial u}{\partial t}\|_{\FF'} +
\|u\|_{\FF}.
\]
The norms  $\|u\|_{\WW_{0, T}}$ and $\|u\|_{\WW_{T}}$) are defined
analogously: we replace $\FF$ in the above definition by $\FF_{0,
T}$ and $\FF_{T}$, respectively.

For $a,b\in\BR\cup\{+\infty\}\cup\{-\infty\}$ let  $C(a, b; H)$
denote  the space of all functions $u\in\BB((a,b]\times E)$ such
that the mapping $(a,b]\ni t\mapsto u(t)\in H$ is continuous and
let $C(\BR; H)= C(-\infty,+\infty;H)$. It is well known (see
\cite{LM}) that $\WW\subset C(\BR; H)$.

By  $D(a,b;H)$ we denote the space of those functions
$u\in\BB((a,b]\times E)$  for which the mapping $(a,b]\ni t\mapsto
u(t)\in H$ is c\`adl\`ag, i.e. right continuous with left limits.

Let $\{B^{(t)},t\in\BR\}$ be a family of regular Dirichlet forms
on $F$. In the paper we assume that for every $u, v\in F$ the
mapping
\[
\BR\ni t\mapsto B^{(t)}(u, v)
\]
is measurable and the constant $\alpha_{0}$ of conditions (B2),
(B3) does not depend on $t$. We may and will assume that
$\alpha_{0}<1$. We also assume that there exists $\lambda>0$ such
that
\begin{equation}
\label{eqp.0} \frac1\lambda B_{\alpha_{0}}^{(0)}(u, u)\le
B_{\alpha_{0}}^{(t)}(u, u)\le \lambda B_{\alpha_{0}}^{(0)}(u, u),
\quad u\in F,\quad t\in\BR.
\end{equation}
For $(u,v)\in(\FF\times\WW)\cup(\WW\times\FF)$ we put
\begin{equation}
\label{eq2.02} \EE(u, v)= \left\{
\begin{array}{l}(-\frac{\partial u}{\partial t}, v)
+\BB(u, v), \quad (u, v)\in\WW\times\FF, \smallskip \\
(u, \frac{\partial v}{\partial t}) +\BB(u, v), \quad (u, v)\in\FF\times\WW
\end{array}
\right.
\end{equation}
and $\EE_{\alpha}(u, v)=\EE(u,v)+\alpha(u, v)_{L^2}$, where
$(\cdot,\cdot)$ stands for the duality pairing between $\FF$ and
$\FF'$ and
\[
\BB(u,v)=\int_{\BR}B^{(t)}(u(t), v(t))\,dt.
\]

It is known (see, e.g., \cite{Oshima,Stannat} that for every
$\alpha>\alpha_{0}$ and $f\in\HH$ there exist unique $G_{\alpha}f,
\hat{G}_{\alpha}\in\FF$ such that
\[
\EE_{\alpha}(G_{\alpha}f, v)=(f, v), \quad v\in\WW,
\]
\[
\EE_{\alpha}(v, \hat{G}_{\alpha}f)=(f, v), \quad v\in\WW.
\]
Moreover, there exist strongly continuous semigroups $\{T_{t},
t\ge 0\}$, $\{\hat{T}_{t}, t\ge 0\}$ on $\HH$ such that
$\|T_{t}\|_{L^{2}}\le e^{\alpha_{0}t}$,
$\|\hat{T}_{t}\|_{L^{2}}\le e^{\alpha_{0}t}$, $t\ge 0$, and
\begin{equation}
\label{eqp.1}
G_{\alpha}f=\int_{0}^{\infty}e^{-\alpha t}T_{t}f\,dt, \quad
\hat{G}_{\alpha}f=\int_{0}^{\infty}e^{-\alpha t}\hat{T}_{t}f\,dt.
\end{equation}
It is also known that $T_{t}$ (resp. $\hat{T}_{t}$) can be
extended  to  $L^{\infty}(E^{1};m_{1})$ (resp. $L^{1}(E^{1};
m_{1})$) and that $T_{t}$ (resp. $\hat{T}_{t}$) is a contraction
on $L^{\infty}(E^{1};m_{1})$ (resp. $L^{1}(E^{1};m_{1})$).
Therefore $\alpha G_{\alpha}$ (resp. $\alpha\hat{G}_{\alpha}$)
given by (\ref{eqp.1}) is well defined and is a contraction on
$L^{\infty}(E^{1};m_{1})$ (resp. $L^{1}(E^{1};m_{1})$).

We say that a form $\EE$ has the dual Markov property if ($\hat{\mbox{B}}4)$
holds with $B$ replaced by $\EE$ and $F$ replaced by $\mathcal{W}$.
This is equivalent to say that $\alpha\hat{G}_\alpha$ is a contraction on $L^1(E^1;m_1)$
for every $\alpha>0$ (see the reasoning in the proof of \cite[Theorem 1.1.5]{Oshima}).
By standard approximation arguments (see the proof of Theorem \ref{tw3.8}), if $\EE$
has the dual Markov property then
\[\int_0^T B^{(t)}(u(t)-(u\wedge a)(t), (u\wedge a)(t))\ge 0\]
for every $u\in \FF_{0,T}$ and $a\in\mathbb{R}$.

A function $u\in\BB^{+}(E^{1})$ satisfying $\beta
G_{\beta+\alpha}u\le u$  (resp. $\beta \hat{G}_{\beta+\alpha}u\le
u$) for every $\beta\ge 0$ is called an $\alpha$-excessive (resp.
$\alpha$-co-excessive) function. By $\PP_{\alpha}$ (resp.
$\hat{\PP}_{\alpha}$) we denote the set of all $\alpha$-excessive
(resp. $\alpha$-co-excessive) functions.

Let $\psi\in L^{2}(E^{1}; m_{1})$, $0<\psi\le 1$, $m_{1}$-a.e. For
an open set $U\subset E^{1}$ we put
\[
\mbox{Cap}_{\psi}(U)\equiv(h_{U}, \psi)_{L_2},
\]
where $h=G_{1}\psi$ and $h_{U}$ is the reduced function of $h$ on
$U$ (see \cite{Stannat}). For an arbitrary set $B\subset E^{1}$ we
put
\[
\mbox{Cap}_{\psi}(B)=\inf\{\mbox{Cap}_{\psi}(U); B\subset U,
U\subset E^{1}, U\mbox{-open}\}.
\]

We say that set $B$ is $\EE$-exceptional if Cap$_{\psi}(B)=0$. We
say that some property is satisfied quasi everywhere (q.e.) if the
set of those $z\in E^{1}$ for which it does not hold is
$\EE$-exceptional. The capacity Cap$_{\psi}$ is equivalent to the
capacity considered in \cite[Section 6.2]{Oshima}. It is known
(see the argument following (6.2.2) on page 237 in \cite{Oshima})
that for every $f\in\WW$,
\begin{equation}
\label{eq2.4} \|e_{f}\|_{\FF}\le c\|f\|_{\WW},
\end{equation}
where
\[
e_{f}=\min\{u\in\PP_{1}\cap\FF: u\ge f\mbox{ a.e.}\}
\]
(see \cite[Theorem 6.2.6]{Oshima}). By \cite[Proposition
3.6]{Stannat}  (see also the reasoning in the proof of
\cite[Proposition 3.7]{Stannat}), for every $u\in\HH$ and
$\lambda>0$,
\[
\mbox{Cap}_{\psi}(\{|u|>\lambda\})\le \frac 1 \lambda
\|u\|_{L^{2}}\cdot\|\psi\|_{L^{2}}.
\]
Combining the above with (\ref{eq2.4}) we conclude that for every
$f\in\WW$,
\begin{equation}
\label{eq1.1c} \mbox{Cap}_{\psi}(\{|f|>\lambda\})\le \frac c
\lambda \|f\|_{\WW}\cdot\|\psi\|_{L^{2}}.
\end{equation}

Let $\{F_{k}\}$ be an increasing sequence of closed subsets of
$E^{1}$.  It is called a nest if Cap$_{\psi}(F_{k}^{c})\rightarrow
0$ as $k\rightarrow\infty$. We say that a function $u\in
\BB(E^{1})$ is quasi-continuous (resp. quasi-l.s.c.) if there
exists a nest $\{F_{k}\}$ such that $u_{|F_{k}}$ is continuous
(resp. l.s.c.) for every $k\ge 1$.

A Borel measure $\mu$ on $E^{1}$ is called smooth if it does not
charge $\EE$-exceptional sets and there exists a nest $\{F_{k}\}$
such that $|\mu|(F_{k})<\infty$ for $k\ge 1$, where $|\mu|$ is the
variation of $\mu$. By $S$ we denote the set of all smooth
measures on $E^1$. $S_{0}$ is the set of all measures of finite
energy integrals, i.e. the subset of $S$ consisting of all
measures $\mu$ having the property that there is $K\ge0$ such that
\[
\int_{E}|\eta|\,d|\mu|\le K\|\eta\|_{\WW}, \quad \eta\in\WW.
\]

Let us remark that each $\eta\in\WW$ possesses a quasi-continuous
version, so that the integral on the left-hand side of the above
inequality is well defined.

For a given Borel measure $\mu$ on $E^{1}$ and a Borel measurable
function $f$ on $E^{1}$ let $f\cdot\mu$ denote the Borel measure
on $E^{1}$ given by the formula
\[
(f\cdot\mu)(B)=\int_{B}f\,d\mu, \quad B\in \BB(E^{1}).
\]
We will also use the notation
\[
\langle f, \mu\rangle=\int_{E^{1}}f\,d\mu.
\]
Since Cap$_\psi$ is strongly subadditive (see \cite{Stannat}),
using and (\ref{eq1.1c}) and repeating the proofs of Lemmas 2.2.8,
2.2.9 in \cite{Fukushima} (it is enough to replace the capacity
appearing there by Cap$_\psi$) one can show that for every $\mu\in
S$ there exists a nest $\{F_{k}\}$ such that
$\mathbf{1}_{F_{k}}\cdot\mu\in S_{0}$.

Let us  recall that for every $\mu\in S_{0}$ and
$\alpha>\alpha_{0}$  there exists unique $U_{\alpha}\mu,
\hat{U}_{\alpha}\mu\in\FF$ such that
\[
\EE_{\alpha}(U_{\alpha}\mu, \eta)
=\EE_{\alpha}(\eta,\hat{U}_{\alpha}\mu)=\int_{E^{1}}\eta\,d\mu
\]
for every $\eta\in\WW$.

It is known (see \cite{MR,Oshima}) that with a regular Dirichlet
form $(B,F)$ one can associate a Hunt process
$\mathbb{M}^0=(\{X_{t},t\ge 0\}, \{P_{x},x\in
E\cup\{\Delta\}\},\FF^{0},\{\theta_{t}^{0}, t\ge 0\},\zeta^0)$
such that for every $f\in\BB_{b}(E)$ and $\alpha>0$ the function
\[
(R_{\alpha}^{0}f)(x)=E_{x}\int_{0}^{\infty}e^{-\alpha
t}f(X_{t})\,dt,\quad x\in E
\]
is a $B$-quasi-continuous $m$-version of $G^{0}_{\alpha}$. It is
also known (see \cite{Oshima,Stannat}) that with the
time-dependent form $\EE$ defined by (\ref{eq2.02}) one can
associated a Hunt process $\mathbb{M}=(\{\bX_{t},t\ge0\},\{P_{z},
z\in E^{1}\cup\{\Delta\}),\FF,\{\theta_{t},t\ge 0\},\zeta)$ such
that for every $f\in\BB_{b}(E^{1})$ and $\alpha>0$ the function
\[
(R_{\alpha}f)(z)=E_{z}\int_{0}^{\infty}e^{-\alpha
t}f(\bX_{t})\,dt,\quad z\in E^1
\]
is an $\EE$-quasi-continuous $m_{1}$-version of $G_{\alpha}f$.
Moreover,
\[
\bX_{t}=(\tau(t), X_{\tau(t)}), \quad t\ge 0,
\]
where $\tau(t)$ is the uniform motion to the right, i.e.
$\tau(t)=\tau(0)+t$, $\tau(0)=s$, $P_{z}$-a.s. for $z=(s,x)$, and
for each $s\in\BR$ the process $\mathbb{M}^{(s)}=(\{X_{t+s}, t\ge
0\}, \{P_{s,x}, x\in E\}, \FF^{(s)}=\{\FF_{s+t}, t\ge 0\})$ is a
Hunt process associated with the form $(B^{(s)}, F)$.

A real valued $\FF$ adapted process $A$ is called an additive
functional (AF) of $\mathbb{M}$ if there exists a set
$\Lambda\subset\Omega$ (called defining set) and an
$\EE$-exceptional set $N\subset E^{1}$ such that
$P_{z}(\Lambda)=1$, $z\in E_{1}\setminus N$,
$\theta_{t}\Lambda\subset\Lambda$, $t\ge 0$ and for every
$\omega\in\Lambda$,
\begin{enumerate}
\item[(a)]$[0,\infty)\ni t\mapsto A_{t}(\omega)$ is c\`adl\`ag,
\item[(b)]$A_{0}(\omega)=0$, $|A_{t}(\omega)|<\infty$, $t\in [0,\zeta)$,
$A_{t}(\omega)=A_{\zeta}(\omega)$, $t\ge\zeta(\omega)$,
\item[(c)]$A_{s+t}(\omega)=A_{t}(w)+A_{s}(\theta_{t}\omega)$, $s,t\ge 0$.
\end{enumerate}
An AF $A$ of $\mathbb{M}$ is called natural (NAF) if $A$ and $\bX$
has no common discontinuities. An AF $A$ of $\mathbb{M}$ is called
continuous (CAF) if $A$ is a continuous process. Finally, an AF
$A$ of $\mathbb{M}$ is called positive (PAF) if $A_t$ is
non-negative for every $t\ge 0$.

Let $\mu$ be a non-negative Borel measure on $E^{1}$ and $A$  be a
PAF of $\mathbb{M}$. We say that $\mu$ and $A$ are in the Revuz
correspondence if for every $m_{1}$-integrable
$\alpha$-co-excessive function $h$ and every $f\in
\BB_{b}^{+}(E_{1})$,
\[
\int_{E^{1}}f(z)h(z)\mu(dz) = \lim_{t\rightarrow 0}\frac{1}{t}
E_{h\cdot m_1}(f\cdot A)_{t}=
\lim_{\beta\rightarrow\infty}\beta(h, U^{\beta}_{A}f)_{L^{2}},
\]
where
\[
(f\cdot A)_{t}=\int_{0}^{t}f(\bX_{s})\,dA_{s},
\quad (U_{A}^{\beta}f)(z)
=E_{z}\int_{0}^{\infty}e^{-\beta t}f(\bX_{t})\,dA_{t}.
\]

By \cite[Theorem 6.4.7]{Oshima}, for every $\mu\in S_{0}$ there
exists  a unique NAF $A$ of $\mathbb{M}$ in the Revuz
corespondence with $\mu$. Since each measure $\mu\in S$ may be
approximated by measures in $S_{0}$, repeating step by step the
proof of \cite[Theorem 4.1.16]{Oshima} one can show that for every
$\mu\in S$ there exists a unique NAF $A$ of $\mathbb{M}$ in the
Revuz correspondence with $\mu$. We will denote it by $A^{\mu}$.

\nsubsection{Feynman-Kac functionals}
\label{sec3}

In this section we prove basic regularity results for $u$ defined
by (\ref{eq1.01}). We begin with continuity properties. Then we
prove that $u$ is the usual weak solution to (\ref{eqi.2}) if
$\varphi\in L^2(E;m)$ and $\mu\in S_0(E_{0,T})$. In the last part
we derive energy estimates for $u$ in  case $\varphi\in L^1(E;m)$,
$\mu\in\MM_{0,b}(E_{0,T})$.

\subsubsection{General continuity properties}

Let $E_T=(-\infty,T]\times E$, $E_{0,T}=(0,T]\times E$.  By
$\tilde{C}(E^{1})$ (resp. $\tilde{C}(E_{T}), \tilde{C}(E_{0, T})$)
we denote the set of Borel measurable functions $u$ on $E^{1}$
(resp. $E_{T},E_{0,T}$) such that for $m_{1}$-a.e. $z\in E^{1}$
(resp. $E_{T}, E_{0,T}$) the process $t\mapsto u(\bX_{t})$ is
right continuous on $[0,\zeta)$ (resp. $[0,\zeta_{\tau})$, where
$\zeta_{\tau}=\zeta\wedge(T-\tau(0))$) and the process $t\mapsto
u(\bX_{t-})$ is left continuous on $(0, \zeta)$ (resp. $(0,
\zeta_{\tau})$) $P_{z}$-a.s.

In \cite{MR} it is proved that if $B$ is a quasi-regular Dirichlet
form and $u\in \tilde{C}(E)$ then $u$ is quasi-continuous. From
this it follows that for every finely open $U\subset E$, if $u\in
\tilde{C}(U)$ then $u$ is quasi-continuous on $U$ (to see this it
is enough to consider the part of the form $B$ on $U$, which is
also quasi-regular). If $U$ is not finely open then in general the
last implication does not hold. In the following lemma we show
that it is true, however, if $U=E_{T}$.

\begin{lm}
\label{lm1.1}
Assume that $f\in\tilde{C}(E_{T})$. Then $f$ is quasi-continuous on $E_{T}$.
\end{lm}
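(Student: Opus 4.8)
The plan is to reduce the statement to the case of a finely open set, which is exactly the situation recalled in the remark preceding the lemma; here, as in that remark, ``quasi-continuous on $E_{T}$'' refers to the part of $\EE$ on $E_{T}$. Put $U:=(-\infty,T)\times E$. This set is open in $E^{1}$, hence finely open, and it is the fine interior of $E_{T}$; the part of $E_{T}$ lying outside $U$ is the slice $S:=\{T\}\times E$. The crucial point is that passing to the part of the process $\mathbb{M}$ on $E_{T}$ introduces nothing new: because the time coordinate $\tau(t)=\tau(0)+t$ moves continuously and strictly to the right, the exit time of $E_{T}$ for $\mathbb{M}$ equals the exit time of $U$ for $\mathbb{M}$, and both are $\zeta_{\tau}=\zeta\wedge(T-\tau(0))$. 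Hence the part of $\mathbb{M}$ on $E_{T}$ is $\mathbb{M}$ killed at $\zeta_{\tau}$, i.e. the part of $\mathbb{M}$ on the \emph{open} set $U$, and the form it generates is the part of $\EE$ on $U$, which is therefore quasi-regular, so the quoted result applies to it. Moreover $S$ has zero capacity for this part form: the killed process never attains the time level $T$, so $S$ is never hit; equivalently, the equilibrium potentials of the strips $(T-\varepsilon,T)\times E$ are bounded by the remaining lifetime $T-\tau(0)<\varepsilon$, so their part-capacities tend to $0$ as $\varepsilon\downarrow0$.

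Granting this, I would first observe that $f\in\tilde C(E_{T})$ is equivalent to $f_{|U}\in\tilde C(U)$ with respect to the part process on $U$. Indeed $m_{1}(E_{T}\setminus U)=m_{1}(S)=0$, so ``for $m_{1}$-a.e. $z\in E_{T}$'' and ``for $m_{1}$-a.e. $z\in U$'' are the same requirement, and for $t\in[0,\zeta_{\tau})$ we have $\tau(t)<T$, so $\bX_{t}\in U$; thus the right continuity of $t\mapsto f(\bX_{t})$ on $[0,\zeta_{\tau})$ and the left continuity of $t\mapsto f(\bX_{t-})$ on $(0,\zeta_{\tau})$ appearing in the definition of $\tilde C(E_{T})$ are precisely the conditions defining $\tilde C(U)$. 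By the quoted result (the finely open case, i.e. \cite{MR} and its version for the present setting, applied to the part of $\EE$ on $U$), $f$ is then quasi-continuous on $U$: there is an increasing family of closed sets $F_{k}$ with $f_{|F_{k}}$ continuous and with the part-capacity of $U\setminus F_{k}$ tending to $0$; replacing $F_{k}$ by $F_{k}\cap\big((-\infty,T-1/k]\times E\big)$ if necessary, and using that the time-strips above have part-capacity tending to $0$, we may take each $F_{k}$ closed in $E^{1}$. Since the part of $\EE$ on $E_{T}$ coincides with the part of $\EE$ on $U$ and $S$ carries zero capacity for it, $\{F_{k}\}$ is a nest on $E_{T}$ along which $f$ is continuous, i.e. $f$ is quasi-continuous on $E_{T}$.

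The substantive step — and the reason the analogous implication genuinely fails for a general non-finely-open set but survives for $E_{T}$ — is the identification made in the first paragraph: one must recognize the part of $\EE$ on $E_{T}$ as the part of $\EE$ on the open set $U$ (so that no quasi-regularity is lost) and check that the exceptional slice $S$ is truly negligible for that form. Once this is in place, everything else is the routine transfer of the known finely open case.
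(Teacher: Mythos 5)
The reduction to the finely open set $U=(-\infty,T)\times E$ fails at its central claim: the terminal slice $S=\{T\}\times E$ is \emph{not} negligible for the capacity with respect to which quasi-continuity is asserted. The relevant capacity is $\mbox{Cap}_\psi$ of the ambient form $\EE$ (a nest on $E_T$ means closed $F_k\subset E_T$ with $\mbox{Cap}_\psi(E_T\setminus F_k)\to0$), and the paper states and then crucially uses, immediately after this lemma in the proof of Proposition \ref{stw1.1}, that $\mbox{Cap}_\psi(\{T\}\times B)=0$ \emph{iff} $m(B)=0$. Indeed $S_{\{T\}\times B}$ is defined through $X_t$ \emph{and} $X_{t-}$, and the process started at $(s,x)$ with $s<T$ reaches the slice at time $T-s$ (via $X_T$ or $X_{T-}$) with non-vanishing probability; for the same reason the capacities of the strips $(T-\varepsilon,T)\times E$ do not tend to $0$ (an equilibrium potential is a hitting-probability-type quantity, not bounded by the remaining lifetime). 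Consequently your sets $F_k\cap\bigl((-\infty,T-1/k]\times E\bigr)$ satisfy $E_T\setminus F_k\supset\{T\}\times E$, so $\mbox{Cap}_\psi(E_T\setminus F_k)$ is bounded away from zero and you do not obtain a nest on $E_T$. What your argument proves is quasi-continuity on the open set $U$, which is the easy part; the entire content of the lemma is the control of $f$ on and near the slice. (A secondary gap: the result you quote from the remark preceding the lemma is stated for the spatial form $B$ on $E$ and its parts on finely open subsets of $E$; its analogue for the time-dependent form $\EE$ on subsets of $E^1$ is not available off the shelf and is essentially what the paper's proof establishes.)

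The paper's actual route is direct and does not pass through parts of the form. It proves the probabilistic formula $\mbox{Cap}_\psi(B)=E_\mu e^{-S_B}$ for all Borel $B$, with $S_B=\inf\{0\le t<\zeta:\,X_t\in B\mbox{ or }X_{t-}\in B\}\wedge\zeta$, shows that $C_b(E_T)$ is dense in $\tilde C(E_T)$ for the norm $\|f\|_T=E_{\mu,T}\sup_{t\ge0}e^{-t}(|f(\bX_t)|\vee|f(\bX_{t-})|)$ --- a norm that does see the values of $f$ on the slice, through $t=T-\tau(0)$ --- and then runs a Borel--Cantelli argument on the sets $A_n=\{|f_{n+1}-f_n|>2^{-n}\}$. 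The feature of $E_T$ that replaces fine openness is the identity $\|\mathbf{1}_{A_n}\|=\|\mathbf{1}_{A_n}\|_T$ for $A_n\subset E_T$: since $\tau(t)=\tau(0)+t$ moves only to the right, the process started at times $>T$ never enters $E_T$, so the capacity of subsets of $E_T$ is computed from starting points in $E_T$ alone. That is the identification you should be making, rather than discarding the slice.
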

\begin{dow}
We may assume that $f\ge 0$. Let us extend $f$ by zero to the
whole $E^{1}$. As in the proof of \cite[Lemma V.2.6]{MR}, with
\cite[Proposition V.1.6]{MR} replaced by \cite[Proposition
IV.3.4]{Stannat}, we show that for every open $B\subset E^1$,
\begin{equation}
\label{eq1.1} \mbox{Cap}_{\psi}(B)=E_{\mu}e^{-S_{B}},
\end{equation}
where
\[
S_{B}=\inf\{t\ge 0,  \bar{X}_{0}^{t}\cap B\neq\emptyset\}.
\]
We next repeat, with some obvious changes, arguments from the
proof of \cite[Lemma V.2.19]{MR} to show that (\ref{eq1.1}) holds
for every $B\in\mathcal{B}(E^{1})$. Since $\mathbb{M}$ is special
standard, $S_{B}=\inf\{0\le t<\zeta;X_{t}\in A\mbox{ or }X_{t-}\in
A\}\wedge\zeta$.  For $f\in\BB_{b}(E^{1})$ set
\[
\|f\|=E_{\mu}\sup_{t\ge 0} e^{-t}(|f(\mathbf{X}_{t})| \vee
|f(\bX_{t-})|),
\]
\[
\|f\|_{T}=E_{\mu,T}\sup_{t\ge 0}
e^{-t}(|f(\mathbf{X}_{t})|\vee|f(\bX_{t-})|),
\]
where $P_{\mu, T}(\cdot)=\int_{E_{T}}P_{z}(\cdot)\varphi (z)\,dz$,
$P_{\mu}(\cdot)=\int_{E}P_{z}(\cdot)\varphi(z)\,dz$. Arguing as in
the proof of  \cite[Lemma 5.23]{MR} (with the norm $\|\cdot\|_{T}$
on $\tilde{C}(E_{T})$) we show that
$\overline{C_{b}(E_{T})}=\tilde{C}(E_{T})$, where
$\overline{C_{b}(E_{T})}$ is the closure of $C_{b}(E_{T})$ in
$\tilde{C}(E_{T})$ with respect to the norm $\|\cdot\|_{T}$. Let
$\{f_{n}\}\subset C_{b}(E_{T})$ be such that
$\|f-f_{n}\|_{T}\rightarrow 0$ and $\|f_{n+1}-f_{n}\|_{T} <
2^{-2n}$, $n\ge 1$. For $n,N\in\BN$ set
\[
A_{n}=\{z\in E_{T}; \, |f_{n+1}-f_{n}|>2^{-n}\}, \quad
B_{N}=\bigcup_{n\ge N}A_{n}.
\]
By (\ref{eq1.1}), for every $N\in\mathbb{N}$,
\begin{align*}
\mbox{Cap}_{\psi}(B_{N})\le \sum_{n\ge N}\mbox{Cap}_{\psi}(A_{n})
= \sum_{n\ge N}E_{\mu}e^{-S_{B}}&=\sum_{n\ge N}
\|\mathbf{1}_{A_{n}}\|= \sum_{n\ge N}\|\mathbf{1}_{A_{n}}\|_{T}\\
&\le \sum_{n\ge N}2^n\|f_{n+1}-f_{n}\|_{T}\le 2^{-N+1}.
\end{align*}
By standard argument (see the proof of \cite[Proposition
5.24]{MR}) we can now show that the function
\[
\tilde{f}(z)=\left\{
\begin{array}{ll}\lim_{n\rightarrow\infty}f_{n}(z), &
z\in\bigcup_{N\in\mathbb{N}}(E_{T} \setminus B_{N}),\smallskip \\ f(z), &
\mbox{otherwise}
\end{array}
\right.
\]
is quasi-continuous on $E_{T}$ and $\tilde{f}=f$ q.e. on $E_{T}$.
\end{dow}
\medskip

Let $S(E_T), S(E_{0,T})$ denote the spaces of smooth measures with
support in $E_T, E_{0,T}$, respectively. By $\mathcal{R}$ (resp.
$\mathcal{R}(E_{T}),\mathcal{R}(E_{0,T})$) we denote the space of
all $\mu\in S$ (resp. $\mu\in S(E_{T})$, $\mu\in S(E_{0,T})$) such
that $E_{z}A^{|\mu|}_{\infty}<\infty$ for q.e. $z\in E^{1}$ (resp.
$E_{T},E_{0,T}$). Observe that if $\mu\in S(E_{T})$ then
$\mathcal{R}=\mathcal{R}(E_{T})$.

We say that a Borel measurable function $u$ is quasi-c\`adl\`ag on
$E^{1}$ (resp. $E_{T},E_{0,T}$) if for q.e. $z\in E^{1}$ (resp.
 $E_{T}$, $E_{0,T}$) the process $t\mapsto u(\bX_{t})$ is
c\`adl\`ag on $[0,\zeta]$ (resp. $[0,\zeta_{\tau}]$) $P_{z}$-a.s.

\begin{stw}
\label{stw1.1} Assume that $\delta_{\{T\}}\otimes\varphi\cdot m$,
$\mu\in\mathcal{R}(E_{0, T})$ and $\varphi\ge 0$, $\mu\ge 0$.
Let $u:E_T\rightarrow\BR$ be defined as
\begin{equation}
\label{eq1.0}
u(z)=E_{z}\mathbf{1}_{\{\zeta>T-\tau(0)\}}\varphi(\bX_{T-\tau(0)})
+E_{z}\int_{0}^{\zeta_{\tau}}dA_{r}^{\mu}, \quad z\in E_{T}.
\end{equation}
Then
\begin{enumerate}
\item[\rm{(i)}] $u$ is quasi-l.s.c. and quasi-c\`adl\`ag, and if $u\in
L^{2}(E_{T};m_{1})$  then $u\in D(-\infty,T;H)$. If moreover
$A^{\mu}$ is continuous on $[0,\zeta_{\tau}]$ then $u$ is
quasi-continuous on $E_{T}$, and if $u\in L^{2}(E_{T};m_{1})$ then
$u\in C(-\infty,T H)$,

\item[\rm{(ii)}] there exists a MAF $M$ such that
\begin{equation}
\label{eq1.5}
u(\bX_{t})=\mathbf{1}_{\{\zeta>T-\tau\{0\}\}}\varphi(\bX_{T-\tau(0)})
+\int_{t}^{\zeta_{\tau}}dA_{r}^{\mu}-
\int_{t}^{\zeta_{\tau}}dM_{r}, \quad t\in[0,\zeta_{\tau}]
\end{equation}
$P_z$-a.s. for q.e. $z\in E^1$.
\end{enumerate}
\end{stw}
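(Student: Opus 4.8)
The plan is to establish part (ii) first, since the representation (\ref{eq1.5}) is the engine behind every regularity assertion in (i). As noted in the statement we may take $\varphi\ge0$, $\mu\ge0$. Set $\xi=\mathbf{1}_{\{\zeta>T-\tau(0)\}}\varphi(\bX_{T-\tau(0)})+A^{\mu}_{\zeta_{\tau}}$; by the hypotheses $\mu\in\RR(E_{0,T})$ and $\delta_{\{T\}}\otimes\varphi\cdot m\in\RR(E_{0,T})$ we have $E_z|\xi|<\infty$ for q.e. $z$, so for such $z$ the closed martingale $N_t=E_z[\xi\,|\,\FF_t]$ is a well defined càdlàg uniformly integrable $P_z$-martingale. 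The heart of the argument is the identity
\[
N_t=A^{\mu}_{t\wedge\zeta_{\tau}}+u(\bX_{t\wedge\zeta_{\tau}}),\qquad t\ge0,
\]
where at the terminal time $u(\bX_{\zeta_{\tau}})$ is read as $\mathbf{1}_{\{\zeta>T-\tau(0)\}}\varphi(\bX_{T-\tau(0)})$. To prove it I would use, on $\{t<\zeta_{\tau}\}$, the elementary relations $\zeta_{\tau}\circ\theta_t=\zeta_{\tau}-t$, the $\theta_t$-invariance of $\{\zeta>T-\tau(0)\}$, and $\bX_{T-\tau(0)}\circ\theta_t=\bX_{T-\tau(0)}$, together with additivity of $A^{\mu}$, to rewrite $\xi-A^{\mu}_t=\bigl(\mathbf{1}_{\{\zeta>T-\tau(0)\}}\varphi(\bX_{T-\tau(0)})+A^{\mu}_{\zeta_{\tau}}\bigr)\circ\theta_t$; the Markov property of $\mathbb{M}$ then gives $E_z[\xi-A^{\mu}_t\,|\,\FF_t]=u(\bX_t)$, and on $\{t\ge\zeta_{\tau}\}$ the random variable $\xi$ is $\FF_{\zeta_{\tau}}$-measurable so the identity is trivial.

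Now put $M_t:=N_t-N_0=A^{\mu}_{t\wedge\zeta_{\tau}}+u(\bX_{t\wedge\zeta_{\tau}})-u(\bX_0)$. This is defined pathwise (it does not depend on the starting point), $M_0=0$, $M_t=M_{t\wedge\zeta_{\tau}}$, and a short computation using once more additivity of $A^{\mu}$, the cocycle property $\bX_s\circ\theta_t=\bX_{s+t}$ and the fact that $\zeta_{\tau}$ is a terminal time shows $M$ is additive. Since $N$ is a $P_z$-martingale for the chosen $z$, so is $M$, and the standard propagation argument — $E_{z'}[M_{t+s}-M_t\,|\,\FF_t]=E_{\bX_t}[M_s]$ by the Markov property, with $\bX_t$ avoiding the exceptional set $P_{z'}$-a.s. — extends the martingale property to q.e. starting point $z'$. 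Hence $M$ is a MAF, and rearranging the displayed identity on $[0,\zeta_{\tau}]$ yields (\ref{eq1.5}).

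For part (i), the quasi-càdlàg property is then immediate: $A^{\mu}$ is a NAF and $M$ a càdlàg martingale, so by (\ref{eq1.5}) the path $t\mapsto u(\bX_t)$ is càdlàg on $[0,\zeta_{\tau}]$ for q.e. $z$. For quasi-lower-semicontinuity I would write $u=u_1+u_2$ with $u_2(z)=E_zA^{\mu}_{\zeta_{\tau}}$, $u_1(z)=E_z[\mathbf{1}_{\{\zeta>T-\tau(0)\}}\varphi(\bX_{T-\tau(0)})]$, and exhibit each summand as an increasing limit of quasi-continuous functions: choosing (as in Section~\ref{sec2}) a nest $\{F_k\}$ with $\mathbf{1}_{F_k}\cdot\mu\in S_0(E_{0,T})$, the potentials $E_{\cdot}A^{\mathbf{1}_{F_k}\mu}_{\zeta_{\tau}}$ are quasi-continuous and increase to $u_2$, and approximating $\varphi$ from below by $\varphi_k$ with $\delta_{\{T\}}\otimes\varphi_k\cdot m\in S_0(E_{0,T})$ gives quasi-continuous functions increasing to $u_1$; a sum of quasi-l.s.c. functions is quasi-l.s.c. (Alternatively one checks that $u_1,u_2$ are $0$-excessive for the process killed at $\zeta_{\tau}$ and uses that such functions are quasi-l.s.c.) When additionally $u\in L^2(E_T;m_1)$, the statement $u\in D(-\infty,T;H)$ follows from the quasi-càdlàg property by the usual argument — testing (\ref{eq1.5}) against co-excessive functions and using the Revuz formalism shows $(-\infty,T]\ni t\mapsto u(t)\in H$ is càdlàg.

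Finally, when $A^{\mu}$ is continuous, $u_2=E_{\cdot}A^{\mu}_{\zeta_{\tau}}$ is the potential of a positive CAF with $E_zA^{\mu}_{\zeta_{\tau}}<\infty$ q.e., hence quasi-continuous; with the quasi-continuity of $u_1$ obtained above this gives that $u$ is quasi-continuous on $E_T$ — equivalently, one verifies $u\in\tilde{C}(E_{T})$ and invokes Lemma~\ref{lm1.1}. The upgrade to $u\in C(-\infty,T;H)$ when $u\in L^2(E_T;m_1)$ is then obtained exactly as in the càdlàg case. I expect the main obstacle to be the bookkeeping at the terminal time $\zeta_{\tau}$, where $A^{\delta_{\{T\}}\otimes\varphi\cdot m}$ carries a jump while $\bX$ is continuous there: this must be handled both in identifying $N_t$ with $A^{\mu}_{t\wedge\zeta_{\tau}}+u(\bX_{t\wedge\zeta_{\tau}})$ and in checking membership in $\tilde{C}(E_{T})$, where one has to match the left limits of $t\mapsto u(\bX_t)$ with $u(\bX_{t-})$ at the (totally inaccessible) jump times of $\bX$; it is here that the NAF property of $A^{\mu}$, the jump structure of the MAF $M$, and the quasi-left-continuity of $\mathbb{M}$ enter.
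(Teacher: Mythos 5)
Your treatment of part (ii) and of the quasi-c\`adl\`ag/quasi-continuity assertions follows essentially the paper's route (the closed martingale $N_t=E_z[\xi\,|\,\FF_t]$, the strong Markov property, a version of $M$ independent of the starting point, then membership in $\tilde{C}(E_T)$ and Lemma \ref{lm1.1}), and that portion is sound. Two of your remaining steps, however, contain genuine gaps.

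First, the quasi-l.s.c.\ argument. You approximate $u_2=E_{\cdot}A^{\mu}_{\zeta_\tau}$ by the potentials $E_{\cdot}A^{\mathbf{1}_{F_k}\mu}_{\zeta_\tau}$ with $\mathbf{1}_{F_k}\cdot\mu\in S_0$ and assert that these are quasi-continuous. They are not in general: $A^{\mathbf{1}_{F_k}\mu}$ is only a NAF, possibly with jumps, so by the very statement you are proving its potential is merely quasi-c\`adl\`ag and quasi-l.s.c.\ --- invoking that here is circular, and the device ``an increasing limit of quasi-continuous functions is quasi-l.s.c.'' does not apply to your approximants. The fallback you mention (``excessive functions are quasi-l.s.c.'') is likewise not a quotable fact in this time-dependent semi-Dirichlet setting. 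The paper instead puts $u_\alpha(z)=E_z\mathbf{1}_{\{\zeta>T-\tau(0)\}}\varphi(\bX_{T-\tau(0)})+\alpha E_z\int_0^{\zeta_\tau}e^{-\alpha t}u(\bX_t)\,dt$: the second term is the potential of a \emph{continuous} AF, so $u_\alpha$ is quasi-continuous by the part of the proposition already proved, and the strong Markov property shows $u_\alpha\nearrow u$.

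Second, the passage from $u\in L^2(E_T;m_1)$ to $u\in D(-\infty,T;H)$, which you dismiss as ``the usual argument --- testing (\ref{eq1.5}) against co-excessive functions and using the Revuz formalism.'' Quasi-c\`adl\`agness of $t\mapsto u(\bX_t)$ along the paths does not yield that $(-\infty,T]\ni t\mapsto u(t)\in H$ is c\`adl\`ag, and no Revuz computation is indicated that would. This is precisely where the paper invests most of its effort: it introduces $w_\alpha=\alpha R_\alpha w\in\WW\subset C(\BR;H)$, derives the energy identities (\ref{eq1.7})--(\ref{eq1.8}) by the $v^{\varepsilon}$ test-function construction, uses $w_\alpha\nearrow w$ pointwise together with $w_\alpha\rightarrow w$ in $\FF$ and Peng's monotone limit theorem to conclude that $t\mapsto\|w(t)\|^2_{L^2}$ and $t\mapsto(w(t),\eta)_{L^2}$ are c\`adl\`ag, and then combines lower semicontinuity, Pierre's c\`agl\`ad-version theorem and a weak-to-strong convergence argument to produce the left limits in $H$. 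None of this is present in, or recoverable from, your sketch, so the claims $u\in D(-\infty,T;H)$ and $u\in C(-\infty,T;H)$ remain unproved.
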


\begin{dow}
Let us consider the following additive functional
\begin{equation}
\label{eq1.2}
A_{t}=\mathbf{1}_{\{\tau(0)<T\le \tau(0)+t\}}\varphi(\bX_{T-\tau(0)}),
\quad t\ge 0.
\end{equation}
Observe that $A=A^{\nu}$, where
$\nu=\delta_{\{T\}}\otimes\varphi\cdot m$.  Set $\delta=\nu+\mu$
and
\begin{equation}
\label{eq1.3} w(z)=E_{z}A^{\delta}_{\infty}, \quad z\in E^{1}.
\end{equation}
By the assumptions, $w(z)<\infty$ for a.e. $z\in E^{1}$. Using
argument analogous  to that in the proof of \cite[Lemma
4.2]{KR:JFA} one can show that in fact  $w(z)<\infty$ for q.e.
$z\in E^{1}$. Observe that
\begin{equation}
\label{eq1.4} w(z)=u(z), \quad z\in(-\infty,T)\times E.
\end{equation}
Since for every $B\in\mathcal{B}(E)$, Cap$_{\psi}(\{T\}\times
B)=0$  iff $m(B)=0$, it follows from (\ref{eq1.4}) that
$u(z)<\infty$ for q.e. $z\in E_{T}$. Let $N=\{z\in E^{1};
u(z)=\infty\}$. We may assume that $N$ is properly exceptional. By
the strong Markov property, for every $z\in E_{T}\setminus N$ and
$\sigma\in\mathcal{T}$ such that $0\le\sigma\le\zeta_{\tau}$ we
have
\[
u(\bX_{\sigma})=E_{z}(\mathbf{1}_{\{\zeta>T-\tau\{0\}\}}
\varphi(\bX_{T-\tau(0)})|\FF_{\sigma})
+E_{z}(A_{\infty}^{\mu}|\FF_{\sigma})-A_{\sigma}.
\]
By the section theorem it follows that $u(\bX)$ has the
representation (\ref{eq1.5}) with $M$ being a c\`adl\`ag version
of the martingale
\[
E_{z}(\mathbf{1}_{\{\zeta>T-\tau\{0\}\}}\varphi(\bX_{T-\tau(0)})
+\int_{0}^{\zeta_{\tau}}dA_{r}^{\mu}|\FF_{t}) -u(\bX_{0}).
\]
This shows (ii) because by \cite[Lemma A.3.5]{Fukushima} one can
choose such a version independently of $z$. From (\ref{eq1.5}) we
conclude that $u$ is quasi-c\`adl\`ag on $E_{T}$. Now let us
assume additionally that $A^{\mu}$ is continuous on
$[0,\zeta_{\tau}]$. Then from (\ref{eq1.5}) we deduce that
$u(\bX)$ is right-continuous on $[0, \zeta_{\tau}]$,
$(u(\bX))_{-}$ is left-continuous on $[0, \zeta_{\tau})$ and
$(u(\bX_{t}))_{-}=u(\bX_{t-})$, $t\in[0,\zeta_{\tau})$ (see the
reasoning in the proof of Claim 2 in \cite[Proposition
IV.5.14]{MR}). Hence $u\in\tilde{C}(E_{T})$, which when combined
with Lemma \ref{lm1.1} implies that $u$ is quasi-continuous on
$E_{T}$. For $\alpha>0$ put
\[
u_{\alpha}(z)=E_{z}\mathbf{1}_{\{\zeta>T-\tau(0)\}}
\varphi(\bX_{T-\tau(0)})+\alpha E_{z}
\int_{0}^{\zeta_{\tau}}e^{-\alpha t}u(\bX_{t})\,dt.
\]
By what has already been proved, $u_{\alpha}$ is quasi-continuous
on $E_{T}$. By the strong Markov property,
\[
u_{\alpha}(z)=E_{z}\mathbf{1}_{\{\zeta>T-\tau(0)\}}
\varphi(\bX_{T-\tau(0)})+E_{z}\int_{0}^{\zeta_{\tau}}
(1-e^{-\alpha t})\,dA_{t}^{\mu},
\]
which implies that $u_{\alpha}(z)\nearrow u(z)$, $z\in E_{T}$.
Hence $u$ is quasi-l.s.c on $E_{T}$. If $\alpha>\alpha_{0}$ and
$u\in L^{2}(E^{1}; m)$ then $w\in L^{2}(E^{1}, m_{1})$ (since
$w\le u$) and $w_{\alpha}$ defined as
\[
w_{\alpha}(z)=\alpha E_{z}\int_{0}^{\infty}e^{-\alpha t}
w(\bX_{t})\,dt, \quad z\in E^{1}
\]
belongs to $\mathcal{W}(E^{1})$. Since $w_{\alpha}$ is
quasi-continuous, $w_{\alpha}\in C(\mathbb{R};H)$. Moreover, since
$w_{\alpha}=\alpha R_{\alpha}w$,
\[
\EE_{\alpha}(w_{\alpha}, \eta)=\alpha(w, \eta), \quad \eta\in
\mathcal{W}(E^{1})
\]
or, equivalently,
\begin{equation}
\label{eq1.6} \EE(w_{\alpha}, \eta)=\alpha(w-w_{\alpha}, \eta),
\quad \eta\in \mathcal{W}(E^{1}).
\end{equation}
Let $t_{1}<t_{2}$ and let $v$ be a measurable function on $E^{1}$.
Put
\begin{equation}
\label{eq1.6a}
v^{\varepsilon}(t, x)=\left\{
\begin{array}{ll}\frac 1 \varepsilon v(t_{1}+\varepsilon, x)(t-t_{1}),
& (t,x)\in [t_{1}, t_{1}+\varepsilon]\times E, \smallskip \\ v(t,x),
& (t,x)\in[t_{1}, t_{2}]\times E, \smallskip \\ -\frac 1 \varepsilon
v(t_{2}, x)(t-t_{2}-\varepsilon), & (t,x)\in[t_{2},
t_{2}+\varepsilon]\times E, \smallskip \\ 0, &\mbox{otherwise}.
\end{array}
\right.
\end{equation}
Then taking $w_{\alpha}^{\varepsilon}$ as a test function  in
(\ref{eq1.6})  and letting $\varepsilon\rightarrow 0^{+}$ we get
\begin{align}
\label{eq1.7} \nonumber &\|w_{\alpha}(t_{1})\|^{2}_{L^{2}}-
\|w_{\alpha}(t_{2})\|^{2}_{L^{2}}+
2\int_{t_{1}}^{t_{2}}B^{(t)}(w_{\alpha}(t), w_{\alpha}(t))\,dt \\
&\qquad =2\alpha\int_{t_{1}}^{t_{2}}\|w(t)-w_{\alpha}(t)\|^{2}_{L^{2}}\,dt,
\end{align}
whereas taking $\eta^{\varepsilon}$ with nonnegative $\eta\in F$
as a test function and letting $\varepsilon\rightarrow 0^{+}$ we
get
\begin{align}
\label{eq1.8} \nonumber &(w_{\alpha}(t_{1}), \eta)_{L^{2}}-
(w_{\alpha}(t_{2}), \eta)_{L^{2}}+
\int_{t_{1}}^{t_{2}}B^{(t)}(w_{\alpha}(t), \eta)\,dt\\
&\qquad=\alpha\int_{t_{1}}^{t_{2}}(w(t)-w_{\alpha}(t),
\eta)_{L^{2}}\,dt.
\end{align}
Write
\[
x(t)=2\int_{0}^{t}B^{(s)}(w(s), w(s))\,ds, \quad x^{\eta}(t)
=\int_{0}^{t}B^{(s)}(w(s), \eta)\,ds,
\]
\[
x_{\alpha}(t)=2\int_{0}^{t}B^{(s)}(w_{\alpha}(s), w_{\alpha}(s))\,ds,
\quad x^{\eta}_{\alpha}(t)=\int_{0}^{t}B^{(s)}(w_{\alpha}(s),\eta)\,ds,
\]
and
\[
y_{\alpha}(t)=\|w_{\alpha}(t)\|^{2}_{L^{2}}-x_{\alpha}(t), \quad y(t)
=\|w(t)\|^{2}_{L^{2}}-x(t),
\]
\[
y^{\eta}_{\alpha}(t)=(w_{\alpha}(t),
\eta)_{L^{2}}-x_{\alpha}^{\eta}(t), \quad y^{\eta}(t)=(w(t),
\eta)_{L^{2}}-x^{\eta}(t).
\]
By what has already been proved, $w_{\alpha}(z)\nearrow w(z)$,
$z\in E^{1}$. It is also known that $w_{\alpha}\rightarrow w$ in
$\FF$ (see \cite[Theorem 6.1.2]{Oshima}). Therefore from
(\ref{eq1.7}), (\ref{eq1.8}) it may be concluded that
\[
y_{\alpha}(t)\rightarrow y(t), \quad y^{\eta}_{\alpha}(t)
\rightarrow y^{\eta}(t), \quad t\in\mathbb{R},
\]
\[
x_{\alpha}(t)\rightarrow x(t), \quad x^{\eta}_{\alpha}(t)\rightarrow x^{\eta}(t),
\quad t\in\mathbb{R}.
\]
Moreover, $y$ is nonincreasing and for every $\eta\in F$ such that
$\eta\ge 0$ the function $y_{\eta}$ is nonincreasing. Since the
sequences $\{\|w_{\alpha}(t)\|^{2}_{L^{2}}\}$, $\{(w_{\alpha}(t),
\eta)_{L^{2}}\}$ are nondecreasing we get by \cite{Peng} that the
mappings $t\mapsto \|w(t)\|^{2}_{L^{2}}$, $t\mapsto (w(t),
\eta)_{L^{2}}$ are c\`adl\`ag on $\mathbb{R}$. By the classical
results they are also l.s.c. We now show that $w\in D(\mathbb{R},
H)$. Let $t_{n}\rightarrow t_{0}^{+}$. Then
\[
\|w(t_{n})-w(t_{0})\|^{2}_{L^{2}}=\|w(t_{n})\|^{2}+\|w(t_{0})\|^{2}-
2(w(t_{n}), w(t_{0}))_{L^{2}}.
\]
Since $t\rightarrow\|w(t)\|^{2}_{L^{2}}$ is c\`adl\`ag,
\[
\limsup_{n\rightarrow\infty} \|w(t_{n})-w(t_{0})\|^{2}_{L^{2}}=
2\|w(t_{0})\|^{2}- 2\liminf_{n\rightarrow\infty}(w(t_{n}), w(t_{0}))_{L^{2}}.
\]
But the mapping $t\rightarrow(w(t),w(s))$ is l.s.c. Hence
\[
\limsup_{n\rightarrow\infty}\|w(t_{1})-w(t_{0})\|^{2}_{L^{2}}\le
0.
\]
Let $t_{n}\nearrow t_{0}^{-}$. Since
$t\rightarrow\|w(t)\|^{2}_{L^{2}}$  is locally bounded and
$t\rightarrow(w(t), \eta)_{L^{2}}$ is c\`adl\`ag, it follows that
there exists $v\in H$ not depending on the choice of the sequence
$\{t_{n}\}$ such that $w(t_{n})\rightarrow v$ weakly in $H$. By
\cite{Pierre} there exists an $m_{1}$-version $\tilde{w}$ of $w$
such that the mapping $\BR\ni t\mapsto\tilde{w}(t)\in H$ is
c\`agl\`ad, i.e. left continuous with right limits. Without loss
of generality we may assume that $\tilde{w}(t_{n})=w(t_{n})$
$m$-a.e. for $n\ge 1$. Therefore $\{w(t_{n})\}$ is strongly
convergent in $H$ and of course $w(t_{n})\rightarrow v$ in $H$. In
particular, $\|w(t_{n})\|_{L^{2}}\rightarrow \|v\|_{L^{2}}$. Since
$t\mapsto\|w(t)\|^{2}_{L^{2}}$ is c\`adl\`ag, there exists the
limit $\lim_{t\rightarrow t_{0}^{-}}\|w(t)\|^{2}_{L^{2}}$ and
obviously $\lim_{t\rightarrow t_{0}^{-}}\|w(t)\|_{L^{2}}=\|v\|$.
Therefore $\lim_{t\rightarrow t_{0}^{-}}w(t)=v$ strongly in $H$.
Finally, since $w(z)=u(z)$ for $z\in (-\infty,T)\times E$,  $u\in
D(-\infty, T; H)$.
\end{dow}

\begin{uw}
If in Lemma \ref{lm1.1} and Proposition \ref{stw1.1} we consider
the form $\EE$ on $[0,\infty)\times E$ instead the form $\EE$ on
$E^{1}$, then their assertions remains valid if we replace $E_{T}$
by $E_{0, T}$, replace $(-\infty, T]$ by $(0,T]$ and $\mathcal{R}$
by $\mathcal{R}(E_{0,T})$.
\end{uw}

\subsubsection{Energy estimates: the case of finite energy integral
measures}

In the sequel  $\int_{a}^{b}$ stands for $\int_{(a,b]}$.

\begin{df}
Let $\varphi\in L^{2}(E;m)$ and $\mu\in S_{0}(E_{0,T})$. We say
that a measurable function $u: E_{0,T} \rightarrow\mathbb{R}$ is a weak
solution of the Cauchy problem
\begin{equation}
\label{eq1.10}
-\frac{\partial u}{\partial t} -L_{t}u=\mu, \quad u(T)=\varphi
\end{equation}
if
\begin{enumerate}
\item[(a)] $u\in\FF_{0,T}$, $u\in D(0,T;H)$,
\item[(b)] for every  $t\in(0, T]$ and
$\eta\in\mathcal{W}(E_{0,T})$,
\begin{align*}
&(u(t),\eta(t))_{L^{2}}+ \int_{t}^{T}(u(s), \frac{\partial
\eta}{\partial t}(s))_{L^{2}}\,ds+ \int_{t}^{T}B^{(s)}(u(s),
\eta(s))\,ds \\
& \qquad=(\varphi, \eta(T))_{L^{2}}
+\int_{t}^{T}\!\!\int_{E}\eta(z)\,d\mu(z).
\end{align*}
\end{enumerate}
\end{df}

\begin{stw}
\label{stw1.2} There exists at most one weak solution of
$\mbox{\rm(\ref{eq1.10})}$.
\end{stw}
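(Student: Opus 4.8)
The plan is to show that the difference $w=u_1-u_2$ of two weak solutions of \mbox{\rm(\ref{eq1.10})} is identically zero, via the classical parabolic energy identity for $w$ followed by Gronwall's lemma. Subtracting the identities of part (b) written for $u_1$ and $u_2$ with the same data $\varphi,\mu$, the terms $(\varphi,\eta(T))_{L^2}$ and $\int_t^T\!\int_E\eta\,d\mu$ cancel, so $w\in\FF_{0,T}\cap D(0,T;H)$ satisfies
\[
(w(t),\eta(t))_{L^2}+\int_t^T\Big(w(s),\tfrac{\partial\eta}{\partial t}(s)\Big)_{L^2}ds+\int_t^T B^{(s)}(w(s),\eta(s))\,ds=0
\]
for all $t\in(0,T]$ and $\eta\in\WW(E_{0,T})$; taking $t=T$ gives $w(T)=0$ in $H$.

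The key point is that, although a single weak solution is only required to lie in $\FF_{0,T}\cap D(0,T;H)$, the difference $w$ actually belongs to $\WW(E_{0,T})=\WW_{0,T}$. Indeed, testing the identity above with $\eta(t,x)=\phi(t)v(x)$, $\phi\in C_c^\infty(0,T)$, $v\in F$, and $t$ so small that $\phi(t)=0$, one reads off that the $H$-valued map $s\mapsto w(s)$ has distributional time derivative given by $\langle\tfrac{\partial w}{\partial t}(s),v\rangle=B^{(s)}(w(s),v)$ on $(0,T)$; by (B2) together with (B3) and (\ref{eqp.0}) one has $\|B^{(s)}(w(s),\cdot)\|_{F'}\le c\|w(s)\|_F$, so $\tfrac{\partial w}{\partial t}\in L^2(0,T;F')=\FF'_{0,T}$ and hence $w\in\WW_{0,T}$. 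In particular $w\in C([0,T];H)$ and the integration-by-parts formula for $\WW_{0,T}$ (see \cite{LM}) is available.

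Applying that formula to $\int_t^T(w,\tfrac{\partial\eta}{\partial t})_{L^2}ds$ and using $w(T)=0$, the identity of the first paragraph collapses to $\int_t^T\langle\tfrac{\partial w}{\partial t}(s),\eta(s)\rangle\,ds=\int_t^T B^{(s)}(w(s),\eta(s))\,ds$ for every $\eta\in\WW(E_{0,T})$ and $t\in(0,T]$. Taking $\eta=w$ (legitimate since $w\in\WW(E_{0,T})$), the left-hand side equals $\tfrac12(\|w(T)\|_{L^2}^2-\|w(t)\|_{L^2}^2)=-\tfrac12\|w(t)\|_{L^2}^2$, so
\[
\tfrac12\|w(t)\|_{L^2}^2+\int_t^T B^{(s)}(w(s),w(s))\,ds=0,\qquad t\in(0,T].
\]
By (B1), $B^{(s)}(w(s),w(s))\ge-\alpha_0\|w(s)\|_{L^2}^2$, hence $\|w(t)\|_{L^2}^2\le 2\alpha_0\int_t^T\|w(s)\|_{L^2}^2\,ds$; the backward Gronwall inequality together with $w(T)=0$ forces $w(t)=0$ in $H$ for all $t$, i.e. $u_1=u_2$ $m_1$-a.e.

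The main obstacle is precisely the weak regularity imposed in the definition of a weak solution: one cannot use $u_i$ directly as a test function in (b), since it need not belong to $\WW(E_{0,T})$. What removes it is the observation that the ``true measure'' source $\mu$ cancels in the difference, leaving a homogeneous equation whose right-hand side is an honest element of $\FF'_{0,T}$; this forces $w\in\WW(E_{0,T})$, and from there the argument is the standard parabolic energy estimate. (Alternatively, one could sidestep this by time-regularizing $w$ — e.g. replacing it by $\alpha G_\alpha w$ as in the proof of Proposition~\ref{stw1.1} and letting $\alpha\to\infty$ — but the direct route above is shorter.)
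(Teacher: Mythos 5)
Your proof is correct and follows essentially the same route as the paper's: form the difference of two solutions, deduce that it lies in $\WW(E_{0,T})$, test the resulting homogeneous identity with the difference itself, and read off the energy identity $\tfrac12\|w(t)\|^2_{L^2}+\int_t^TB^{(s)}(w(s),w(s))\,ds=0$. The only cosmetic differences are that you spell out the argument for $w\in\WW(E_{0,T})$ (which the paper says is ``easily deduced'') and that you keep $\alpha_0$ and close with Gronwall's lemma, whereas the paper reduces to $\alpha_0=0$ without loss of generality so that $B^{(s)}(w,w)\ge0$ kills the energy term outright.
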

\begin{dow}
Without loss of generality we may assume that $\alpha_{0}=0$.
Assume  that $u_{1}, u_{2}$ are solutions of (\ref{eq1.10}) and
set $u=u_1-u_2$. Then for every $\eta\in\mathcal{W}(E_{0, T})$ and
$t\in [0,T]$,
\begin{equation}
\label{eq1.11} (u(t), \eta(t))_{L^{2}}+ \int_{t}^{T}(u(s),
\frac{\partial \eta}{\partial s}(s))_{L^{2}}\,ds
+\int_{t}^{T}B^{(s)}(u(s),\eta(s))\,ds=0.
\end{equation}
From this we easily deduce that $u\in\mathcal{W}(E_{0,T})$.
Replacing $\eta$ by $u$ in (\ref{eq1.11}) we get
\[
\|u(t)\|^{2}_{L^{2}}+ 2\int_{t}^{T}B^{(s)}(u(s), u(s))\,ds=0,
\]
which implies that $u=0$ a.e.
\end{dow}

\begin{tw}
\label{tw1.1} Assume that $\varphi\in L^{2}(E;m)$ and $\mu\in
S_{0}(E_{0, T})$. Then $u:E_{0,T}\rightarrow\BR$ defined by
\mbox{\rm(\ref{eq1.0})} is a weak solution of the Cauchy problem
\mbox{\rm(\ref{eq1.10})}.
\end{tw}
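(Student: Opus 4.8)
The plan is to identify the function $u$ defined by \mbox{\rm(\ref{eq1.0})} with the $0$-order potential of a single measure of finite energy on $E_{0,T}$ and to deduce both the regularity (a) and the weak equation (b) from this, the latter by localising in time the variational identity for that potential as in the derivation of \mbox{\rm(\ref{eq1.8})}. By linearity of \mbox{\rm(\ref{eq1.0})} in $(\varphi,\mu)$ I may assume $\varphi\ge0$ and $\mu\ge0$, and, as in the proof of Proposition \ref{stw1.2}, that $\alpha_0=0$; following the Remark after Proposition \ref{stw1.1} I work with the part of $\EE$ on $E_{0,T}$, i.e.\ with the process killed at $\zeta_\tau=\zeta\wedge(T-\tau(0))$, and with the associated resolvents $G_\alpha^{0,T}$ and potential operators $U_\alpha^{0,T}$.

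\emph{Step 1 (identification and (a)).} Put $\nu=\delta_{\{T\}}\otimes\varphi\cdot m$ and $\delta=\nu+\mu$. Using a quasi-continuous version of $\eta\in\WW_{0,T}$ and the embedding $\WW_{0,T}\subset C(0,T;H)$,
\[
\int_{E_{0,T}}|\eta|\,d\nu=\int_E|\eta(T,x)|\varphi(x)\,m(dx)\le\|\varphi\|_{L^2}\|\eta(T)\|_H\le c\,\|\varphi\|_{L^2}\|\eta\|_{\WW_{0,T}},
\]
so $\nu\in S_0(E_{0,T})$ and hence $\delta\in S_0(E_{0,T})$; moreover $A^\delta=A^\nu+A^\mu$, where $A^\nu$ is the additive functional \mbox{\rm(\ref{eq1.2})} (the computation at the beginning of the proof of Proposition \ref{stw1.1}), so that $u(z)=E_z\int_0^{\zeta_\tau}dA^\delta_r$. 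Since $\alpha_0=0$, for $v\in\WW_{0,T}$ with $v(0)=0$ one has $\int_0^T(v(s),\frac{\partial v}{\partial s}(s))_{L^2}\,ds+\int_0^TB^{(s)}(v(s),v(s))\,ds=\frac12\|v(T)\|_{L^2}^2+\int_0^TB^{(s)}(v(s),v(s))\,ds\ge\frac1\lambda\|v\|_{\FF_{0,T}}^2$ by \mbox{\rm(\ref{eqp.0})}, i.e.\ the backward Cauchy problem on $(0,T)$ is coercive, so $U_0^{0,T}\delta\in\FF_{0,T}$ is well defined; by the standard probabilistic representation of the potential of a finite-energy measure (letting $\alpha\downarrow0$ in $(U_\alpha^{0,T}\delta)(z)=E_z\int_0^{\zeta_\tau}e^{-\alpha r}\,dA^\delta_r$) one gets $u=U_0^{0,T}\delta$. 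In particular $u\in\FF_{0,T}\subset L^2(E_{0,T};m_1)$, and then Proposition \ref{stw1.1}(i) (in the form of the Remark; its hypotheses hold since $\nu,\mu\in S_0(E_{0,T})\subset\RR(E_{0,T})$) gives $u\in D(0,T;H)$. This proves (a).

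\emph{Step 2 ((b)).} By the defining property of $U_0^{0,T}\delta$, written via the $\FF\times\WW$ branch of $\EE$ against test functions in $\WW_{0,T}$ that vanish at the initial time,
\[
\int_0^T(u(s),\frac{\partial v}{\partial s}(s))_{L^2}\,ds+\int_0^TB^{(s)}(u(s),v(s))\,ds=(\varphi,v(T))_{L^2}+\int_{E_{0,T}}v\,d\mu,
\]
where the term $(\varphi,v(T))_{L^2}$ comes precisely from the $\nu$-part of $\delta$. Fix $t\in(0,T]$ and $\eta\in\WW_{0,T}$ and take $v=\eta\,\rho_\varepsilon$, with $\rho_\varepsilon:(0,T]\to[0,1]$ piecewise linear, equal to $0$ on $(0,t]$ and to $1$ on $[t+\varepsilon,T]$; then $v\in\WW_{0,T}$, $v(0)=0$ and $v(T)=\eta(T)$. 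Letting $\varepsilon\to0^+$ exactly as in the derivation of \mbox{\rm(\ref{eq1.8})} and using $u\in D(0,T;H)$ — the derivative of $\rho_\varepsilon$ near $t$ produces the boundary term $(u(t),\eta(t))_{L^2}$ via $\frac1\varepsilon\int_t^{t+\varepsilon}u(s)\,ds\to u(t)$ in $H$ (valid at every $t$ by right-continuity of $u$), the two integrals converge to their counterparts over $(t,T]$, and $\int_{E_{0,T}}v\,d\mu\to\int_t^T\!\!\int_E\eta\,d\mu$ — one obtains exactly the identity in part (b) of the definition of a weak solution, for every $t\in(0,T]$. Together with Step 1, $u$ is a weak solution of \mbox{\rm(\ref{eq1.10})}; its uniqueness is already given by Proposition \ref{stw1.2}.

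\emph{Main obstacle.} The points to be handled carefully are the identification $u=U_0^{0,T}\delta$ — which is where the assumption $\mu\in S_0$ (rather than merely $\mu\in\RR$) is used, together with the coercivity from Step 1 and the fact that $\{T\}\times E$ is not $\EE$-exceptional (since $m(E)>0$), which is what makes $\nu\in S_0(E_{0,T})$ and allows the $\nu$-part of $\delta$ to encode the terminal datum — and the passage $\varepsilon\to0^+$ in Step 2, in particular matching the $\nu$-part to the trace term $(\varphi,\eta(T))_{L^2}$ with no spurious contribution at the lower endpoint $t$; this is the standard time-localisation already carried out in the proof of Proposition \ref{stw1.1}, and is made rigorous by $u\in\FF_{0,T}\cap D(0,T;H)$ and the comparability \mbox{\rm(\ref{eqp.0})}. (Alternatively one may insert the resolvent approximation $u_\alpha=\alpha G_\alpha^{0,T}u$, use the identities \mbox{\rm(\ref{eq1.7})}--\mbox{\rm(\ref{eq1.8})} satisfied by $u_\alpha$, and then let $\alpha\to\infty$, as in the proof of Proposition \ref{stw1.1}.)
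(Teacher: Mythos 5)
Your proof is correct and follows essentially the same strategy as the paper: show $\nu=\delta_{\{T\}}\otimes\varphi\cdot m\in S_0$ (with the same estimate), identify $u$ with a potential of $\delta=\nu+\mu$, and then localize the variational identity in time with the piecewise-linear cutoff of (\ref{eq1.6a}), using Proposition \ref{stw1.1} to control the boundary term at $t$. The one place where the routes diverge is how the zero-order potential is reached: the paper avoids discussing coercivity at $\alpha=0$ altogether by forming the $\alpha$-potential $w_\alpha=U_\alpha\delta^\alpha$ of the exponentially weighted measure $\delta^\alpha=\nu+e^{-\alpha(T-\cdot)}\cdot\mu$ and then undoing the weight via $\tilde u(t)=e^{\alpha(T-t)}w_\alpha(t)$, checking directly that $e^{\alpha(T-s)}E_z\int_0^\infty e^{-\alpha t}\,dA_t^{\delta_\alpha}=E_zA_\infty^\delta=u(z)$; you instead normalize $\alpha_0=0$ and invoke coercivity of the killed form $\EE^{0,T}$ (coming from the sign of the boundary term for $v$ vanishing at the initial time) to define $U_0^{0,T}\delta$ and identify it with $u$ by letting $\alpha\downarrow 0$. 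Both are legitimate; your normalization ``WLOG $\alpha_0=0$'' silently performs the same exponential change of variables that the paper carries out explicitly, and your passage $\alpha\downarrow0$ (monotone convergence of the probabilistic representations plus convergence of $U_\alpha^{0,T}\delta$ in $\FF_{0,T}$) deserves a line of justification, but there is no gap.
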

\begin{dow}
Let $\nu=\delta_{\{T\}}\otimes\varphi\cdot m$ and
$\eta\in\mathcal{W}$.  Then
\begin{align*}
\nu(\eta)=\int_{E^{1}}\eta(z)\,
\nu(dz)&= \int_{E}\eta(T,x)\varphi(x)\,dx \\
& \le\|\eta(T)\|_{L^{2}}\cdot \|\varphi\|_{L^{2}}
\le \sup_{t\ge 0}\|\eta(t)\|_{L^{2}}\|\varphi\|_{L^{2}}\le
\|\eta\|_{\mathcal{W}}\cdot \|\varphi\|_{L^{2}}.
\end{align*}
Hence $\nu\in S_{0}$. Let $\alpha>\alpha_{0}$ and let
$\mu^{\alpha}=e^{-\alpha(T-\cdot)}\cdot\mu$,
$\delta^{\alpha}=\nu+\mu^{\alpha}$. Observe that
\[
A_{t}^{\mu_{\alpha}}=\int_{0}^{t}e^{-\alpha(T-\tau(r))}\,dA_{r}^{\mu},
\quad t\ge 0.
\]
Put
\[
w_{\alpha}(z)
=E_{z}\int_{0}^{\infty}e^{-\alpha t}\,dA_{t}^{\delta_{\alpha}}.
\]
It is known that $w_{\alpha}=U_{\alpha}\delta^{\alpha}$, i.e.
$\EE_{\alpha}(w_{\alpha},\eta)=\langle\delta^{\alpha},\eta\rangle$,
$\eta\in\mathcal{W}$. Hence
\[
(w_{\alpha}, \frac{\partial\eta}{\partial t})_{L^{2}}
+\mathcal{B}(w_{\alpha}, \eta)= \langle\delta^{\alpha},
\eta\rangle-\alpha(w_{\alpha}, \eta)_{L_2}, \quad
\eta\in\mathcal{W}.
\]
Therefore for any $t\in[0,T]$ and  $\varepsilon>0$ we have
\[
(w_{\alpha}, \frac{\partial\eta^{\varepsilon}}{\partial t})
+\mathcal{B}(w_{\alpha}, \eta^{\varepsilon})=
\langle\delta^{\alpha}, \eta^{\varepsilon}\rangle
-\alpha(w_{\alpha}, \eta^{\varepsilon}), \quad \eta\in\mathcal{W},
\]
where the approximation $\eta^{\varepsilon}$ is defined by
(\ref{eq1.6a}) with $t_1=t, t_2=T$. Letting
$\varepsilon\rightarrow 0^{+}$ and using Proposition \ref{stw1.1}
we get
\begin{align*}
&(w_{\alpha}(t), \eta(t))_{L^{2}}+ \int_{t}^{T}(w_{\alpha}(s),
\frac{\partial\eta}{\partial s}(s))_{L^{2}} +
\int_{t}^{T}B^{(s)}(w_{\alpha}(s), \eta(s))\,ds \\
&\qquad=\int_{t}^{T}\eta(z)\delta^{\alpha}(dz)-
\int_{t}^{T}\alpha(w_{\alpha}(s), \eta(s))_{L^{2}}\,ds
\end{align*}
for every $\eta\in\mathcal{W}(E_{0,T})$. The second term on the
left-hand side of the above equation is equal to
\[
-\alpha\int_{t}^{T}(u_{\alpha}(s), e^{\alpha(T-s)}\eta(s))\,ds
+\int_{t}^{T}(u_{\alpha}(s),e^{\alpha(T-s)}\frac{\partial\eta}{\partial s})\,ds.
\]
Putting $\tilde{u}(t)=e^{\alpha(T-t)}u_{\alpha}(t)$ we conclude that
\begin{align*}
&(\tilde{u}(s), \eta(t))_{L^{2}}+ \int_{t}^{T}(\tilde{u}(s),
\frac{\partial\eta}{\partial s}(s))_{L^{2}}\,ds
+\int_{t}^{T}B^{(s)}(\tilde{u}(s), \eta(s))\,ds \\
&\qquad =(\varphi, \eta(T))_{L^{2}}+\int_{t}^{T}\eta(z)\,d\mu(z)
\end{align*}
for every $\eta\in\mathcal{W}(E_{0, T})$. Let us put $w(t)
=e^{\alpha(T-t)}w_{\alpha}(t)$, $t\in[0, T]$. Then for $z=(s,x)$,
$s\in[0,T]$ we have
\begin{align*}
w(z)=e^{\alpha(T-s)}
E_{z}\int_{0}^{\infty}e^{-\alpha t}\,dA_{t}^{\delta_{\alpha}}
&=e^{\alpha(T-s)}E_{z}\int_{0}^{\infty}e^{-\alpha
t}e^{-\alpha(T-\tau(t))}\,dA_{t}^{\delta}\\
& =e^{\alpha(T-s)}E_{z}\int_{0}^{\infty}
e^{-\alpha t}e^{-\alpha(T-t-s)}\,dA_{t}^{\delta}
=E_{z}A_{\infty}^{\delta}.
\end{align*}
Since $u(z)=w(z)$ for $z\in[0, T)\times E$ and $u(T)=\varphi$,
$u(z)=\tilde{u}(z)$, $z\in E_{0,T}$.
\end{dow}

\subsubsection{Energy estimates: the case of bounded smooth measures}

Let $\WW_T=\{u\in\WW(E_{0,T});\,u(T)=0\}$,
$\WW_0=\{u\in\WW(E_{0,T});\,u(0)=0\}$ and let
\[
\EE^{0,T}(u,v)=\left\{
\begin{array}{ll}\int_0^T\left(-\frac{\partial u}{\partial t},v\right)\,dt+
 \int_0^TB^{(t)}(u(t),v(t))\,dt,\quad (u,v)\in\WW_T\times\FF_{0,T},\medskip \\
 \int_0^T\left(u,\frac{\partial v}{\partial t}\right)\,dt+
 \int_0^TB^{(t)}(u(t),v(t))\,dt,\quad (u,v)\in\FF_{0,T}\times\WW_0.
 \end{array}
\right.
\]
It is known (see \cite[Example I.4.9(iii)]{Stannat}) that
$\EE^{0,T}$ is a generalized semi-Dirichlet form and
\[
\LL=-\frac{\partial}{\partial t}-L_{t}, \quad D(\LL)=\{u\in \WW_T;
\,\LL u\in\HH_{0,T} \}
\]
is the operator associated with $\EE^{0,T}$. Note that the adjoint operator
$\hat{\LL}$ to $\LL$ is given by
\[\hat{\LL}=\frac{\partial}{\partial t}-\hat{L}_{t}, \quad
D(\hat{\LL})=\{u\in \WW_0;\,\hat{\LL} u\in\HH_{0, T}\}.
\] Let $\{T_t^{0,T},\,t>0\}$ (resp. $\{\hat{T}_t^{0,T},\,t\ge0\}$) be a
$C_0$-semigroup on $\HH_{0,T}$ associated with the operator $\LL$
(resp. $\hat{\LL}$). By (\ref{eqp.0}),
$\|T_t^{0,T}\|_{L^2\rightarrow L^2}\le e^{\alpha_0t}$,
$\|\hat{T}_t^{0,T}\|_{L^2\rightarrow L^2}\le e^{\alpha_0t}$, $t\ge
0$, and the corresponding resolvents are given by
\begin{equation}
\label{eqd.1}
G^{0, T}_{\alpha}f=\int_{0}^{\infty}e^{-\alpha t}T_t^{0,T}f\,dt, \quad
\hat{G}^{0, T}_{\alpha}f=\int_{0}^{\infty}e^{-\alpha t}\hat{T}_{t}^{0, T}f\,dt.
\end{equation}
The Hunt process $\mathbb{M}^{0,T}$ properly associated with the
form $\EE^{0,T}$ is the process $\mathbb{M}$ with life-time
$\zeta_\tau$. Therefore $T^{0,T}_t=0$, $\hat{T}^{0,T}_t=0$, $t\ge
T$. It follows that $G_\alpha^{0,T},\hat{G}_\alpha^{0,T}$ are well
defined as operators on $L^2(E_{0,T};m_1)$  for every
$\alpha\ge0$. It is standard that $T^{0,T}$ (resp.
$\hat{T}^{0,T}$) can be extended to $L^\infty(E_{0,T};m_1)\cup
L^2(E_{0,T};m_1)$ (resp. $L^1(E_{0,T};m_1)\cup L^2(E_{0,T};m_1)$)
and that the extension of $T^{0,T}_t$ (resp. $\hat{T}^{0,T}_t$) is
a contraction on $L^\infty(E_{0,T};m_1)$ (resp.
$L^1(E_{0,T};m_1)$). Therefore for every $\alpha\ge0$ we can
extend $G^{0,T}_\alpha$ (resp. $\hat{G}^{0,T}_\alpha$) defined by
(\ref{eqd.1}) to an operator on $L^\infty(E_{0,T};m_1)$ (resp.
$L^1(E_{0,T};m_1)$). For $\mu\in S$ and $\alpha\ge 0$ we define
\[
R^{0,T}_{\alpha}\mu(z)=E_{z}\int_{0}^{\zeta_\tau}e^{-\alpha
t}dA_{t}^{\mu}, \quad \hat{R}^{0,
T}_{\alpha}\mu(z)=\hat{E}_{z}\int_{0}^{\hat{\zeta_\tau}}e^{-\alpha
t}d\hat{A}_{t}^{\mu},\quad z\in E^1,
\]
where $\hat{\zeta_{\tau}}=\zeta\wedge \tau(0)$,  $\hat{A}^{\mu}$ is the dual additive functional associated
with $\mu$ (see \cite{Oshima}). It is clear that for every $f\in
L^\infty(E_{0,T};m_1) \cup L^2(E_{0,T};m_1)$ and  $g\in
L^1(E_{0,T};m_1) \cup L^2(E_{0,T};m_1)$,
\begin{equation}
\label{eq.eq111}
R_\alpha^{0,T}f=G_\alpha^{0,T}f,\quad
\hat{R}_\alpha^{0,T}g=\hat{G}_\alpha^{0,T}g
\end{equation}
$m_1$-a.e. for every $\alpha\ge0$, and for every non-negative
$\mu,\nu\in S$,
\[
\langle R_\alpha^{0, T}\mu, \nu\rangle=\langle\mu,
\hat{R}_\alpha^{0, T}\nu\rangle.
\]
for $\alpha\ge0$.

By (\ref{eq.eq111}) and Proposition \ref{stw1.1}, for every $f\in
L^\infty(E_{0,T};m_1) \cup L^2(E_{0,T};m_1)$ and  $g\in
L^1(E_{0,T};m_1)\cup L^2(E_{0,T};m_1)$ the resolvents
$G^{0,T}_{\alpha}f,\, \hat{G^{0,T}_{\alpha}}g$ have
quasi-continuous $m_1$-versions. In what follows we adopt the
convention that they are already quasi-continuous. If $\alpha=0$
then we write $R^{0,T},R^{0,T},G^{0,T},\hat{G}^{0,T}$ instead of
$R^{0,T}_0,R^{0,T}_0, G^{0,T}_0$, $\hat{G}^{0,T}_0$.

\begin{stw}
\label{stw1.5} Assume that $\EE$ satisfies the dual condition
$(\Delta)$.  Then
\begin{equation}
\label{eq3.20} \MM_{0, b}(E_{0, T})\subset \mathcal{R}(E_{0, T}).
\end{equation}
\end{stw}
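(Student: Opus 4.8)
The goal is to prove that every $\mu\in\MM_{0,b}(E_{0,T})$ satisfies $E_zA^{|\mu|}_\infty<\infty$ for q.e. $z\in E_{0,T}$, which (since $\mu$ is concentrated on $E_{0,T}$, so that $A^{|\mu|}_\infty=A^{|\mu|}_{\zeta_\tau}$) means exactly that the potential $R^{0,T}|\mu|=E_{\cdot}A^{|\mu|}_{\zeta_\tau}$ is finite q.e. Replacing $\mu$ by $|\mu|$, we may assume $\mu\ge0$; being smooth and concentrated on $E_{0,T}$, such $\mu$ belongs to $S$ and $\mu(E_{0,T})=\|\mu\|_{TV}<\infty$. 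Fix $\alpha\ge0$, the nest $\{F_n\}$ on $E_{0,T}$ and the non-negative functions $\eta_n\in L^2(E_{0,T};m_1)$ supplied by condition $(\Delta)$, so that $\eta_n>0$ $m_1$-a.e. on $F_n$ and $\hat{G}^{0,T}_\alpha\eta_n$ is bounded. I would first record two routine observations: $\eta_n\cdot m_1\in S_0\subset S$, because $\int_{E_{0,T}}|\xi|\eta_n\,dm_1\le\|\xi\|_{L^2}\|\eta_n\|_{L^2}\le\|\xi\|_{\WW}\|\eta_n\|_{L^2}$ for $\xi\in\WW$; and, by (\ref{eq.eq111}), the potential $\hat{R}^{0,T}_\alpha(\eta_n\cdot m_1)$ coincides, as quasi-continuous function, with $\hat{G}^{0,T}_\alpha\eta_n$, hence the two also agree $\mu$-a.e.

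The main point is the duality relation $\langle R^{0,T}_\alpha\nu_1,\nu_2\rangle=\langle\nu_1,\hat{R}^{0,T}_\alpha\nu_2\rangle$, valid in $[0,\infty]$ for non-negative $\nu_1,\nu_2\in S$. Applying it with $\nu_1=\mu$ and $\nu_2=\eta_n\cdot m_1$ and using the observations above gives
\[
\int_{E_{0,T}}R^{0,T}_\alpha\mu\cdot\eta_n\,dm_1
=\int_{E_{0,T}}\hat{G}^{0,T}_\alpha\eta_n\,d\mu
\le\|\hat{G}^{0,T}_\alpha\eta_n\|_\infty\,\|\mu\|_{TV}<\infty.
\]
Since $\eta_n>0$ $m_1$-a.e. on $F_n$, this forces $R^{0,T}_\alpha\mu<\infty$ $m_1$-a.e. on $F_n$; as $\{F_n\}$ is a nest, $E_{0,T}\setminus\bigcup_nF_n$ is $\EE$-exceptional and hence $m_1$-null, so $R^{0,T}_\alpha\mu<\infty$ $m_1$-a.e. on $E_{0,T}$. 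Finally, the Hunt process $\mathbb{M}^{0,T}$ associated with $\EE^{0,T}$ has life-time $\zeta_\tau=\zeta\wedge(T-\tau(0))\le T$ under $P_z$ for every $z\in E_{0,T}$, so $e^{-\alpha t}\ge e^{-\alpha T}$ on $[0,\zeta_\tau]$ and therefore
\[
R^{0,T}\mu(z)=E_zA^\mu_{\zeta_\tau}\le e^{\alpha T}E_z\int_0^{\zeta_\tau}e^{-\alpha t}\,dA^\mu_t=e^{\alpha T}R^{0,T}_\alpha\mu(z),\quad z\in E_{0,T},
\]
whence $R^{0,T}\mu<\infty$ $m_1$-a.e. on $E_{0,T}$.

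It then remains to upgrade the $m_1$-a.e. finiteness of the potential $R^{0,T}\mu=E_{\cdot}A^\mu_{\zeta_\tau}$ to finiteness q.e. This is the one genuinely delicate step, and I would carry it out exactly as in the proof of Proposition \ref{stw1.1}, where — following the argument of \cite[Lemma 4.2]{KR:JFA} — the a.e. finiteness of a potential of the form $E_zA^\delta_\infty$ is improved to q.e. finiteness: one shows that the set $N=\{z\in E_{0,T}:R^{0,T}\mu(z)=\infty\}$, which is $m_1$-null and not charged by $\mu$, is in fact $\EE$-exceptional, using the strong Markov property together with the absolute continuity, for q.e. starting point, of the occupation measure of $\mathbb{M}^{0,T}$ with respect to $m_1$. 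This yields $E_zA^{|\mu|}_\infty<\infty$ for q.e. $z\in E_{0,T}$, i.e. $\mu\in\mathcal{R}(E_{0,T})$; since $\mu\in\MM_{0,b}(E_{0,T})$ was arbitrary, (\ref{eq3.20}) follows. The main obstacle is precisely this last upgrade — everything preceding it is immediate from the duality relation and from the boundedness of the lifetime $\zeta_\tau$ on $E_{0,T}$.
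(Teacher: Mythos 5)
Your argument is correct and is essentially the paper's own proof: the whole content is the duality pairing $\langle G^{0,T}_\alpha\mu,\eta_n\rangle=\langle\mu,\hat G^{0,T}_\alpha\eta_n\rangle\le\|\mu\|_{TV}\|\hat G^{0,T}_\alpha\eta_n\|_\infty$ against the functions $\eta_n$ supplied by $(\Delta)$, which forces $R^{0,T}\mu<\infty$ $m_1$-a.e. on $E_{0,T}$. The paper's proof is in fact terser than yours — it stops at the $m_1$-a.e. statement, leaving implicit both the comparison between $R^{0,T}_\alpha\mu$ and $R^{0,T}\mu$ via the bounded lifetime and the upgrade to q.e. finiteness via the argument of \cite[Lemma 4.2]{KR:JFA} already invoked in Proposition \ref{stw1.1} — so your version simply fills in details the author takes for granted.
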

\begin{dow}
Let $\{\eta_n\}$ be the sequence of the definition of condition
$(\Delta)$. Since
\begin{equation}
\label{eq1.13b} (G^{0,T}_{\alpha}\mu, \eta_{n})_{L^2}=\langle\mu,
\hat{G}^{0,T}_{\alpha}\eta_{n}\rangle \le\|\mu\|\cdot
\|\hat{G}^{0,T}_{\alpha}\eta_{n}\|_{\infty},
\end{equation}
it follows that $R^{0,T}\mu<\infty$ $m_{1}$-a.e. on $E_{0, T}$.
\end{dow}

\begin{uw}
\label{uw3.7} If for some $\gamma\ge 0$ the form $\EE_{\gamma}$
has the dual Markov property then the duality condition $(\Delta)$
is satisfied. Indeed, by \cite[Theorem 1.1.5]{Oshima}, $\alpha\hat
G^{0,T}_{\gamma+\alpha}$  is Markovian for every $\alpha>0$, so
$(\Delta)$ is satisfied with $\eta\equiv1$, $\alpha=1$ and
$F_{n}=E$, $ n\ge 1$.
\end{uw}

The following example shows that the condition that $\EE_{\gamma}$
has the dual Markov property for some $\lambda\ge0$ is not
necessary for $(\Delta)$ to hold.

\begin{prz}
\label{prz3.8} Let $D\subset\BRD$, $d\ge3$, be an open bounded set
with smooth boundary, and let
\[
\LL=-\frac{\partial}{\partial t}-L_t=-\frac{\partial}{\partial t}-
\sum^d_{i,j=1}\frac{\partial}{\partial x_{j}}
(a_{ij}\frac{\partial}{\partial x_{i}})
+\sum^d_{i=1}b_{i}\frac{\partial}{\partial x_{i}}\,,
\]
where $a_{ij}, b_i: [0,T]\times D\rightarrow\BR$ are measurable
functions such that $b_i$ is bounded, $a_{ij}=a_{ji}$ and
\[
\lambda^{-1} |\xi|^{2} \le \sum^d_{i,j=1}a_{ij}\xi_{i}\xi_{j}\le
\lambda|\xi|^{2}, \quad \xi=(\xi_1,\dots,\xi_d)\in\BRD
\]
for some $\lambda\ge1$. Then
\begin{align*}
G^{0, T}f(z)&=E_{z}\int_{0}^{\infty}f(\bX_{t})\,dt =
E_{s,x}\int_{0}^{(T-\tau(0))\wedge\zeta}f(s+t, X_{s+t})\,dt\\
&=E_{s,x}\int_{0}^{T-s}f(s+t, X^{D}_{s+t})\,dt
=\int_{0}^{T-s}\!\!\int_{D}p_{D}(s+t, x,t, y)f(s+t, y)\,dt\,dy\\
& =\int_{0}^{T}\!\!\int_{D}p_{D}(s, x,t, y)f(t, y)\,dt\,dy,
\end{align*}
where $X^{D}$ denotes the process $X$ killed upon leaving $D$  and
$p_D$ is the transition density of $X^D$.  We also have
\[
\hat{G}^{0,T}f(s,x)=\int_{0}^{s}\!\!\int_{D}p_{D}(t,y,s,x)f(t,y)\,dt\,dy.
\]
Let $f\equiv 1$. Then by Aronson's estimates,
\[
(\hat{G}^{0, T}1)(s,x)\le c_{1}
\int_{0}^{s}\!\!\int_{D}(s-t)^{-d/2}
\exp\left(\frac{-c_{2}|y-x|^{2}}{2(s-t)}\right)\,dt\,dy \le c'T.
\]
Thus condition $(\Delta)$ is satisfied. On the other hand it follows from
the formula preceding \cite[Corollary 1.5.4]{Oshima} (page 33) that if we take $b(x)=\sqrt{|x|}$
and $D=B(0,1)$ then there is no  $\gamma\in\mathbb{R}$ such that  $\EE_{\gamma}$ has the dual Markov property.
\end{prz}

\begin{wn}
\label{wn3.9} If for some $\gamma\ge 0$ the form $\EE_{\gamma}$
has the dual Markov property then \mbox{\rm(\ref{eq3.20})} is
satisfied.
\end{wn}
\begin{proof}
Follows from Proposition \ref{stw1.5} and Remark \ref{uw3.7}.
\end{proof}

\begin{tw}
\label{tw3.8} Assume that $\varphi\in L^1(E;m)$,
$\mu\in\mathcal{M}_{0,b}(E_{0,T})$ and there exists
$\gamma\ge\alpha_{0}$ such that $\EE_{\gamma}$ has the dual Markov
property. Let $u$ be defined by \mbox{\rm(\ref{eq1.0})}. Then
$u\in L^{1}(E_{0, T};m_{1})$, $T_{k}(u)\in\FF_{0,T}$ and
\begin{equation}
\label{eq3.15}
\int_{0}^{T}B^{(t)}_{\gamma}(T_{k}(u)(t),T_{k}(u)(t))\,dt \le
k(\|\mu\|+\|\varphi\|_{L^{1}}+\gamma\|u\|_{L^{1}})
\end{equation}
for every $k\ge0$, and moreover, for every $\alpha>0$,
\begin{equation}
\label{eq3.16} \|u\|_{L^1}\le
\alpha^{-1}e^{T(\alpha+\gamma)}(\|\varphi\|_{L^1}+\|\mu\|).
\end{equation}
\end{tw}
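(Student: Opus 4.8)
The plan is to reduce to non-negative data, then establish \eqref{eq3.16} (which also yields $u\in L^1$), and finally prove \eqref{eq3.15} by approximation. Since $\EE_\gamma$ has the dual Markov property, condition $(\Delta)$ holds by Remark \ref{uw3.7}; hence $\delta_{\{T\}}\otimes\varphi\cdot m\in\RR(E_{0,T})$ by \eqref{eq1.05}, and $\mu\in\RR(E_{0,T})$ by Corollary \ref{wn3.9}, so (as in the proof of Proposition \ref{stw1.1}) $u$ is finite q.e.; writing $\delta:=\delta_{\{T\}}\otimes\varphi\cdot m+\mu$ we have $u=R^{0,T}\delta$ on $(0,T)\times E$ and $u(T)=\varphi$. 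By decomposing $\delta=\delta^{+}-\delta^{-}$ and using linearity of $R^{0,T}$, \eqref{eq3.16} and $u\in L^1$ follow once proved for $\varphi,\mu\ge0$; for \eqref{eq3.15} one runs the argument below directly with $T_k(u_n)$ in place of $u_n\wedge k$, the only new point being that the truncation error splits into $(u_n-k)^+$ and $(u_n+k)^-$, whose interaction through $B^{(t)}$ is controlled by the dual Markov property of $\EE_\gamma$ together with the disjointness of their supports and the Beurling--Deny decomposition of the regular Dirichlet forms $B^{(t)}$. For brevity I describe the argument assuming $\varphi,\mu\ge0$, so $u\ge0$ and $T_k(u)=u\wedge k$.

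For \eqref{eq3.16}: on the stochastic interval $[0,\zeta_\tau]$ one has $0\le T-\tau(r)\le T$, hence $dA^\delta_r\le e^{(\gamma+\alpha)(T-\tau(r))}\,dA^\delta_r=e^{(\gamma+\alpha)(T-\tau(0))}e^{-(\gamma+\alpha)r}\,dA^\delta_r$, so that $u\le e^{(\gamma+\alpha)T}R^{0,T}_{\gamma+\alpha}\delta$ $m_1$-a.e. Since $\mathbf{1}_{E_{0,T}}\cdot m_1\in S$, the duality identity gives $\|R^{0,T}_{\gamma+\alpha}\delta\|_{L^1(E_{0,T};m_1)}=\langle\delta,\hat R^{0,T}_{\gamma+\alpha}(\mathbf{1}_{E_{0,T}}\cdot m_1)\rangle$, and since the dual additive functional of the reference measure $m_1$ is $t\mapsto t$ we get $\hat R^{0,T}_{\gamma+\alpha}(\mathbf{1}_{E_{0,T}}\cdot m_1)(z)=\hat E_z\int_0^{\hat\zeta_\tau}e^{-(\gamma+\alpha)t}\,dt\le(\gamma+\alpha)^{-1}\le\alpha^{-1}$. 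Hence $\|u\|_{L^1}\le e^{(\gamma+\alpha)T}\alpha^{-1}\|\delta\|\le\alpha^{-1}e^{T(\alpha+\gamma)}(\|\varphi\|_{L^1}+\|\mu\|)$, which is \eqref{eq3.16}; in particular $u\in L^1(E_{0,T};m_1)$.

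For \eqref{eq3.15}: choose a nest $\{F_n\}$ with $\mu_n:=\mathbf{1}_{F_n}\cdot\mu\in S_0(E_{0,T})$, $\mu_n\nearrow\mu$, and $0\le\varphi_n\nearrow\varphi$ with $\varphi_n\in L^1(E;m)\cap L^2(E;m)$. By Theorem \ref{tw1.1} the functional $u_n$ given by \eqref{eq1.0} with data $(\varphi_n,\mu_n)$ is a weak solution of \eqref{eq1.10}, $u_n\in\FF_{0,T}\cap D(0,T;H)$, $u_n\ge0$, and $u_n\nearrow u$ with $\|u_n\|_{L^1}\le\|u\|_{L^1}$. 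Testing the weak formulation for $u_n$ with the forward Steklov time-averages $(u_n\wedge k)^h\in\WW(E_{0,T})$ (whose time derivative lies in $\HH_{0,T}$) and letting $h\to0^+$ — using $u_n\in D(0,T;H)$ and that the jump contributions that arise are non-negative by convexity of $\Theta_k(r):=\int_0^r(\sigma\wedge k)\,d\sigma$ and may therefore be dropped — one obtains, for each $t\in(0,T]$,
\[
\int_E\Theta_k(u_n(t))\,dm+\int_t^TB^{(s)}(u_n(s),(u_n\wedge k)(s))\,ds\le\int_E\Theta_k(\varphi_n)\,dm+\int_t^T(u_n\wedge k)\,d\mu_n.
\]
Now $\int_E\Theta_k(u_n(t))\,dm\ge\tfrac12\|(u_n\wedge k)(t)\|_{L^2}^2\ge0$ and $\int_E\Theta_k(\varphi_n)\,dm\le k\|\varphi_n\|_{L^1}$, while the dual Markov property of $\EE_\gamma$ applied to $u_n\mathbf{1}_{[t,T]}\in\FF_{0,T}$ at level $a=k$ gives $\int_t^TB^{(s)}(u_n-u_n\wedge k,u_n\wedge k)\,ds\ge-\gamma\int_t^T(u_n-u_n\wedge k,u_n\wedge k)_{L^2}\,ds$; rearranging and adding $\gamma\int_t^T\|(u_n\wedge k)(s)\|_{L^2}^2\,ds$ to both sides yields $\int_t^TB^{(s)}_\gamma((u_n\wedge k)(s),(u_n\wedge k)(s))\,ds\le k\|\varphi_n\|_{L^1}+\int_t^T(u_n\wedge k)\,d\mu_n+\gamma\int_t^T(u_n,u_n\wedge k)_{L^2}\,ds\le k(\|\varphi_n\|_{L^1}+\|\mu_n\|+\gamma\|u_n\|_{L^1})$, using $0\le u_n\wedge k\le k$ and $\mu_n,u_n\ge0$. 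Letting $t\to0^+$ (the integrand is non-negative since $\gamma\ge\alpha_0$) and then $n\to\infty$, using monotone convergence ($\varphi_n,\mu_n,u_n$ converging to $\varphi,\mu,u$ in $L^1$, resp.\ in total variation), the boundedness in $\FF_{0,T}$ of $\{u_n\wedge k\}_n$ afforded by \eqref{eqp.0} and the left-hand side above, the identification of a weak-$\FF_{0,T}$ limit point of $\{u_n\wedge k\}_n$ with $T_k(u)=u\wedge k$ via a.e.\ convergence, and the weak lower semicontinuity of the non-negative continuous quadratic form $w\mapsto\int_0^TB^{(s)}_\gamma(w(s),w(s))\,ds$ on $\FF_{0,T}$, we conclude that $T_k(u)\in\FF_{0,T}$ and that \eqref{eq3.15} holds.

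The step I expect to be the main obstacle is the passage to the limit $h\to0^+$ in the Steklov argument, i.e.\ the rigorous justification of the chain-rule / integration-by-parts identity displayed above for the $D(0,T;H)$-valued weak solution $u_n$ tested against the truncation $u_n\wedge k$: the c\`adl\`ag (rather than continuous) time-dependence of $u_n$, stemming from atoms of $\mu_n$ in the time variable, must be handled, with the jump terms appearing with the correct (non-negative) sign by convexity of $\Theta_k$. An alternative that bypasses this analytic point is to work directly from the decomposition of Proposition \ref{stw1.1}(ii) and apply an It\^o--Meyer--Tanaka formula to $(u_n(\bX_t)\wedge k)^2$, at the cost of invoking the Fukushima decomposition and the Revuz measures of the relevant quadratic variations.
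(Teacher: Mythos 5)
Your argument follows essentially the same route as the paper's proof. For \eqref{eq3.16}, the paper pairs $u=R^{0,T}\nu$ with the constant function $1$ and uses $\alpha\hat R^{0,T}_{\alpha+\gamma}1\le1$ together with $\hat R^{0,T}1\le e^{T(\alpha+\gamma)}\hat R^{0,T}_{\alpha+\gamma}1$; your computation of $\hat R^{0,T}_{\gamma+\alpha}(\mathbf{1}_{E_{0,T}}\cdot m_1)$ through the dual additive functional $t\mapsto t$ is the same estimate in different clothing, and is correct. For \eqref{eq3.15}, the paper likewise approximates by a nest with $\varphi_n\in L^2(E;m)$, $\mu_n\in S_0$, invokes Theorem \ref{tw1.1}, tests the weak formulation with a time-mollification of the truncation of $u_n$ (the backward exponential average $[w]_m(t)=\int_{-T}^{t}me^{-m(t-s)}w(s)\,ds$ rather than your Steklov average), shows the cross term $I_2(m)$ is non-negative by exactly your convexity-of-$\Theta_k$ sign analysis, and then uses the dual Markov property of $\EE_\gamma$ to pass from $\int_0^TB^{(s)}(u_n,T_k(u_n))\,ds$ to $\int_0^TB^{(s)}_\gamma(T_k(u_n),T_k(u_n))\,ds$. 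The technical point you flag as the main obstacle (c\`adl\`ag rather than continuous time-dependence of $u_n$) is dealt with in the paper by the observation that $[w]_m(t)\rightarrow w(t)$ in $H$ already for $w\in D(0,T;H)$, not only for $w\in C(0,T;H)$.

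The one genuine soft spot is your reduction to non-negative data. For \eqref{eq3.16} this is harmless by linearity, but for \eqref{eq3.15} the truncation is nonlinear, and your sketch of the signed case --- splitting the truncation error into $(u_n-k)^{+}$ and $(u_n+k)^{-}$ and controlling their interaction via a Beurling--Deny decomposition of $B^{(t)}$ --- is both unnecessary and not available at this level of generality (the $B^{(t)}$ are non-symmetric semi-Dirichlet forms). The paper avoids the reduction altogether: as recorded in Section \ref{sec2}, for a closed form the dual Markov property $(\hat{\mbox{B}}4)$ is equivalent to $(\hat{\mbox{B}}4\mbox{a})$, i.e. $B(T_k(u),T_k(u))\le B(u,T_k(u))$, and the approximation argument yielding $\int_0^TB^{(t)}_\gamma(u(t)-(u\wedge a)(t),(u\wedge a)(t))\,dt\ge0$ for $u\in\FF_{0,T}$ applies verbatim to the two-sided truncation. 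So you should run your Steklov argument directly with signed $u_n$ and $T_k(u_n)$ and invoke $(\hat{\mbox{B}}4\mbox{a})$ for $\EE_\gamma$ in place of the one-sided cut at level $k$; the rest of your argument, including the passage to the limit in $n$ by weak lower semicontinuity of $w\mapsto\int_0^TB^{(s)}_\gamma(w(s),w(s))\,ds$, then goes through unchanged.
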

\begin{dow}
Since $\EE_{\gamma}$ has the dual Markov property,
$\alpha\hat{R}^{0,T}_{\alpha+\gamma}1\le1$ for $\alpha>0$, which
implies that $\hat{R}^{0,T}1\le\alpha^{-1}e^{T(\alpha+\gamma)}$.
Since $u=R^{0,T}\nu$ $m_{1}$-a.e. on $E_{0,T}$, where
$\nu=\delta_{T}\otimes\varphi\cdot m+\mu$, we therefore have
\[
\|u\|_{L^1}=(R^{0,T}\nu,1)_{L^2} =\langle
\nu,\hat{R}^{0,T}1\rangle \le
\alpha^{-1}e^{T(\alpha+\gamma)}(\|\varphi\|_{L^1}+\|\mu\|),
\]
which proves (\ref{eq3.16}). To prove (\ref{eq3.15}), let us
consider a nest $\{F_{n}\}$ such that
$\varphi_{n}=\mathbf{1}_{F_{n}}(T, \cdot)\varphi\in L^{2}(E;m)$,
$\mu_{n}=\mathbf{1}_{F_{n}}\cdot\mu\in S_{0}$. Write
\[
u_{n}(z)=E_{z}\mathbf{1}_{\{\zeta>T-\tau(0)\}}
\varphi_{n}(\bX_{T-\tau(0)})+
E_{z}\int_{0}^{\zeta_{\tau}}\,dA_{r}^{\mu_{n}}.
\]
By Theorem \ref{tw1.1}, $u_{n}\in\FF\cap D(0, T; H)$ and for every
$\eta\in\WW(E_{0,T})$,
\begin{align}
\label{eq1.12a} \nonumber &(u_{n}(0), \eta(0))_{L^{2}}
+\int_{0}^{T}(u_{n}(s), \frac{\partial\eta}{\partial s}(s))_{L^{2}}\,ds
+ \int_{0}^{T}B^{(s)}(u_{n}(s), \eta(s))\,ds\\ &\qquad=(\varphi_{n},
\eta(T))_{L^{2}}+\int_{E_{0, T}}
\eta\,d\mu_{n}.
\end{align}
Given $w\in\FF_{0,T}\cap D(0, T; H)$ set
\[
[w]_{m}(t)=\int_{-T}^{t}me^{-m(t-s)}w(s)\,ds.
\]
It is well known that $[w]_{m}\rightarrow w$ in $\FF_{0, T}$ and
$[w]_{m}(t)\rightarrow w(t)$ in $H$ for every $t\in[0, T]$ (in the
proof of the last property it is usually assumed that $w\in C(0,
T; H)$ but in fact it is enough to know that $w\in D(0, T; H)$).
Moreover, $[w]_{m}\in\mathcal{W}(E_{0, T})$,
\begin{equation}
\label{eq1.12c}
([w]_{m})_{t}=m(w-[w]_{m})
\end{equation}
and $[w]_{m}\le w$, $[w]_{m}\le [w]_{m+1}$, $m\ge 1$. Let us fix
$n\in\mathbb{N}$ for a moment  and put $v=u_{n}$. Taking
$\eta=[T_{k}(v)]_{m}$ as a test function in (\ref{eq1.12a}) we
obtain
\begin{align}
 \label{eq1.12b}
\nonumber &(v(0),[T_{k}(v)]_{m}(0))_{L^{2}}+ \int_{0}^{T}(v(s),
([T_{k}(v)]_{m})_{s}(s))_{L^{2}}\,ds\\ &\qquad\quad +\int_{0}^{T}B^{(s)}(v(s),
([T_{k}(v)]_{m})_{s}(s))\,ds \nonumber\\ &\qquad= (\varphi_{n},
[T_{k}(v)]_{m}(T))_{L^{2}}+ \int_{E_{0, T}}[T_{k}(v)]_{m}(z)\mu_{n}(dz)
\end{align}
and, by (\ref{eq1.12c}),
\begin{align}
\label{eq1.12f} \nonumber &\int_{0}^{T}(v(s),
([T_{k}(v)]_{m})_{s}(s))_{L^{2}}\,ds=
\int_{0}^{T}([T_{k}(v)]_{m}(s),
([T_{k}(v)]_{m})_{s}(s))_{L^{2}}\,ds \\ & \nonumber
\qquad\quad +\int_{0}^{T}(v(s)-[T_{k}(v)]_{m}(s),
([T_{k}(v)]_{m})_{s}(s))_{L^{2}}\,ds  \\ &
\nonumber \qquad=I_{1}(m)+ \int_{0}^{T}(v(s)-[T_{k}(v)]_{m}(s),
T_{k}(v)(s)-[T_{k}(v)]_{m}(s))_{L^{2}}\,ds \\ & \qquad=I_{1}(m)+I_{2}(m).
\end{align}
Let us denote by $F$ the integrand in the integral $I_{2}(m)$.
Observe that
\begin{equation}
\label{eq1.12d}
-k\le [T_{k}(v)]_{m}(z)\le k, \quad z\in E_{0, T}.
\end{equation}
If $-k\le v(z)\le k$ then  $F(z)\ge 0$. If $v(z)>k$ then by (\ref{eq1.12d}),
\[
F(z)=(v(z)-[T_{k}(v)]_{m}(z), k-[T_{k}(v)]_{m}(z))\ge 0.
\]
Similarly, $F(z)\ge 0$ if $v(z)<-k$. Therefore $I_{2}(m)\ge 0$ for
$m\in\mathbb{N}$. Moreover,
\[
I_{1}(m)=\frac 1 2 \|[T_{k}(v)]_{m}(T)\|^{2}_{L^{2}}-
\frac 1 2 \|[T_{k}(v)]_{m}(0)\|^{2}_{L^{2}}.
\]
By the above equality, (\ref{eq1.12b})--(\ref{eq1.12d}) and the
convergence properties of the sequence $\{[T_{k}(v)]_{m}\}$ we get
\[
\int_{0}^{T}B^{(s)}(v(s), T_{k}(v(s)))_{L^{2}}\,ds \le
k(\|\varphi\|_{L^{1}}+\|\mu\|).
\]
By the above inequality and the assumptions,
\[
\int_{0}^{T}B_{\gamma}^{(s)}(T_{k}(u_{n}(s)),
T_{k}(u_{n}(s)))\,ds \le
k(\|\varphi\|_{L^{1}}+\|\mu\|)+k\gamma\|u\|_{L^{1}}\,.
\]
Letting $n\rightarrow\infty$  we get (\ref{eq3.15}).
\end{dow}

\begin{stw}
\label{stw1.4} Assume that $\mu,\nu$ are non-negative smooth
measures on $E_{0,T}$ and
\[
E_{z}\int_{0}^{\zeta_{\tau}}dA_{t}^{\mu}
\le E_{z}\int_{0}^{\zeta_{\tau}}dA_{t}^{\nu}
\]
for q.e. $z\in E_{0,T}$. If $\EE$ has the dual Markov  property
then $\|\mu\|_{TV}\le \|\nu\|_{TV}$.
\end{stw}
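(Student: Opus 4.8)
The hypothesis is exactly the assertion that $R^{0,T}\mu\le R^{0,T}\nu$ q.e.\ on $E_{0,T}$, where $R^{0,T}=R^{0,T}_0$ is the $0$-potential operator of $\mathbb{M}^{0,T}$. The plan is to recover the total masses $\|\mu\|_{TV}=\mu(E_{0,T})$ and $\|\nu\|_{TV}=\nu(E_{0,T})$ from these potentials by pairing them with $h_\alpha:=\alpha\hat R^{0,T}_\alpha(m_1|_{E_{0,T}})$, $\alpha>0$, and then letting $\alpha\to\infty$. We may assume $\|\nu\|_{TV}<\infty$, for otherwise there is nothing to prove; then $\nu\in\MM_{0,b}(E_{0,T})$, so by Corollary \ref{wn3.9} applied with $\gamma=0$ (recall that $\EE=\EE_0$ has the dual Markov property) we get $\nu\in\mathcal{R}(E_{0,T})$, hence $R^{0,T}\nu<\infty$ q.e.\ and therefore $R^{0,T}\mu<\infty$ q.e.\ as well. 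As for $h_\alpha$: since $\EE$ has the dual Markov property, $\alpha\hat R^{0,T}_\alpha$ is Markovian for every $\alpha>0$ (see the reasoning in the proof of \cite[Theorem 1.1.5]{Oshima}), so $0\le h_\alpha\le1$; moreover $h_\alpha(z)=\hat E_z(1-e^{-\alpha\hat\zeta_\tau})$, whence $\alpha\mapsto h_\alpha(z)$ is nondecreasing and $h_\alpha\uparrow\mathbf{1}$ q.e.\ on $E_{0,T}$ (for q.e.\ $z\in E_{0,T}$ the initial time is positive and the co-process does not die instantaneously, so $\hat\zeta_\tau>0$ $\hat P_z$-a.s.). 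Since $\mu,\nu$ do not charge $\EE$-exceptional sets, $0\le h_\alpha\uparrow\mathbf{1}$ both $\mu$-a.e.\ and $\nu$-a.e.

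The core of the argument is the identity
\[
\langle\sigma,h_\alpha\rangle=\alpha\,\langle R^{0,T}\sigma,\mathbf{1}-h_\alpha\rangle,\qquad\alpha>0,
\]
valid for every non-negative $\sigma\in S$ with $R^{0,T}\sigma<\infty$ q.e. To obtain it I would combine the $m_1$-duality between $R^{0,T}_\alpha$ and $\hat R^{0,T}_\alpha$ recorded in Section \ref{sec3}, which gives $\langle R^{0,T}_\alpha\sigma,m_1|_{E_{0,T}}\rangle=\langle\sigma,\hat R^{0,T}_\alpha(m_1|_{E_{0,T}})\rangle=\alpha^{-1}\langle\sigma,h_\alpha\rangle$ (one may instead approximate $\mathbf{1}$ from below by functions in $L^1\cap L^2(E_{0,T};m_1)$ and use $\hat R^{0,T}_\alpha=\hat G^{0,T}_\alpha$ there, to avoid discussing whether $m_1|_{E_{0,T}}\in S$), together with the resolvent equation for potentials
\[
R^{0,T}\sigma=R^{0,T}_\alpha\sigma+\alpha\,G^{0,T}_\alpha(R^{0,T}\sigma),
\]
which follows by inserting $e^{-\alpha t}=1-\alpha\int_0^t e^{-\alpha s}\,ds$ inside $\int_0^{\zeta_\tau}e^{-\alpha t}\,dA^\sigma_t$, using Fubini and the Markov property and additivity of $A^\sigma$. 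Integrating the resolvent equation against $m_1|_{E_{0,T}}$ and using the $m_1$-duality once more to evaluate $\alpha\langle G^{0,T}_\alpha(R^{0,T}\sigma),m_1|_{E_{0,T}}\rangle=\langle R^{0,T}\sigma,h_\alpha\rangle$, one arrives at $\alpha^{-1}\langle\sigma,h_\alpha\rangle=\langle R^{0,T}\sigma,\mathbf{1}\rangle-\alpha^{-1}\langle R^{0,T}\sigma,h_\alpha\rangle$, which is the claimed identity after multiplying by $\alpha$ (the step is harmless even when $\langle R^{0,T}\sigma,\mathbf{1}\rangle=+\infty$, since $\langle R^{0,T}_\alpha\sigma,m_1|_{E_{0,T}}\rangle=\langle R^{0,T}\sigma,\mathbf{1}-h_\alpha\rangle$ holds already pointwise from the resolvent equation).

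Granting the identity, apply it to $\sigma=\mu$ and to $\sigma=\nu$: since $0\le R^{0,T}\mu\le R^{0,T}\nu$ q.e.\ (hence $m_1$-a.e.) and $\mathbf{1}-h_\alpha\ge0$, we obtain $\langle\mu,h_\alpha\rangle\le\langle\nu,h_\alpha\rangle$ for every $\alpha>0$. Letting $\alpha\to\infty$ and using monotone convergence (with $h_\alpha\uparrow\mathbf{1}$ $\mu$- and $\nu$-a.e.) gives
\[
\|\mu\|_{TV}=\mu(E_{0,T})=\lim_{\alpha\to\infty}\langle\mu,h_\alpha\rangle\le\lim_{\alpha\to\infty}\langle\nu,h_\alpha\rangle=\nu(E_{0,T})=\|\nu\|_{TV}.
\]
I expect the main obstacle to be the derivation of the key identity: justifying the $m_1$-duality for the constant test function $\mathbf{1}$ (equivalently, handling $m_1|_{E_{0,T}}$ as a smooth measure, or carrying through the approximation argument), and checking that the several q.e.\ statements transfer to $\mu$-a.e.\ and $\nu$-a.e.\ statements. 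The concluding passage to the limit is routine.
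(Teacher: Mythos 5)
Your argument is correct and shares the paper's overall strategy: dualize the hypothesis $R^{0,T}\mu\le R^{0,T}\nu$ against an increasing family of co-potentials converging to $\mathbf{1}$ q.e., using $\langle\mu,\hat R^{0,T}\delta\rangle=\langle R^{0,T}\mu,\delta\rangle$ and the dual Markov property to keep the approximants below $\mathbf{1}$. The difference lies in how that family is built. The paper takes $u_n=\hat R^{0,T}\delta_n$ to be the minimal solutions of obstacle problems with barriers $\mathbf{1}_{[0,T]\times E_n}$ (so $u_n=1$ q.e.\ on $[0,T]\times E_n$ and $u_n\nearrow\mathbf{1}$), which invokes the existence theory for parabolic variational inequalities together with a Riesz representation step to identify $\delta_n\in S_0$; you instead use the explicit co-potentials $h_\alpha=\alpha\hat R^{0,T}_\alpha(m_1|_{E_{0,T}})=\hat E_\cdot(1-e^{-\alpha\hat\zeta_\tau})$ and compensate via the resolvent equation. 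Your route is more elementary and self-contained, at the price of the bookkeeping around the identity $\langle\sigma,h_\alpha\rangle=\alpha\langle R^{0,T}\sigma,\mathbf{1}-h_\alpha\rangle$. Two points to tighten there. First, that identity is \emph{not} pointwise, contrary to your parenthetical remark: what holds pointwise is $R^{0,T}_\alpha\sigma=R^{0,T}\sigma-\alpha G^{0,T}_\alpha(R^{0,T}\sigma)$, and converting $\alpha\langle G^{0,T}_\alpha(R^{0,T}\sigma),\mathbf{1}\rangle$ into $\langle R^{0,T}\sigma,h_\alpha\rangle$ and then cancelling requires finiteness; this is harmless here because your reduction to $\|\nu\|_{TV}<\infty$ gives $\langle R^{0,T}\nu,\mathbf{1}\rangle=\langle\nu,\hat R^{0,T}\mathbf{1}\rangle\le T\|\nu\|_{TV}<\infty$ (as $\hat\zeta_\tau\le T$), hence also $\langle R^{0,T}\mu,\mathbf{1}\rangle<\infty$, but you should argue via this finiteness rather than a pointwise claim. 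Second, $h_\alpha\uparrow\mathbf{1}$ q.e.\ rests on $\hat P_z(\hat\zeta_\tau>0)=1$ for q.e.\ $z\in E_{0,T}$, i.e.\ normality of the co-process at q.e.\ points in the semi-Dirichlet setting; this is available in the framework of \cite{Oshima} that the paper works in, but it deserves an explicit justification. With these two repairs your proof stands as a valid alternative that avoids the obstacle-problem machinery entirely.
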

\begin{dow}
It is well known that for every measurable $h$ on $E_{0, T}$  such
that $\eta\ge h$ $m_{1}$-a.e. for some $\eta\in\WW(E_{0, T})$
there exists (a unique) minimal solution $u\in \FF_{0, T}$ of the
obstacle problem
\begin{equation}
\label{eq1.13a}
\hat{\LL}u=0\mbox{ on }\{u>h\}, \quad \hat{\LL}u\ge 0,
\quad u(0)=0, \quad u\ge h
\end{equation}
(see \cite{MP,Pierre}). By Riesz's theorem there exists a Radon
measure  $\delta$ on $E_{0, T}$ such that $\hat{\LL}u=\delta$ (in
$C_{0}(E_{0, T})$). From this one can easily deduce that
$\delta\in S_{0}(E_{0, T})$. Therefore $u=\hat{R}^{0, T}\delta$.
Since $\EE$ is regular, there exists a sequence $\{E_{n}\}$ of
compact subsets of $E$ with the property that for each
$n\in\mathbb{N}$ there exists $\eta\in\WW(E_{0, T})$ such that
$\eta_{n}\ge h_{n}\equiv\mathbf{1}_{[0,T]\times E_{n}}$. Therefore
for every $n\in\mathbb{N}$ there exists a solution $u_{n}$ of the
obstacle problem (\ref{eq1.13a}) with the barrier $h_{n}$. Let
$\delta_{n}\in S_{0}(E_{0, T})$ be such that $u_{n}=\hat{R}^{0,
T}\delta_{n}$. Since $u_{n}$ is the smallest potential majorizing
$h_{n}$ such that $u_{n}(0)=0$, $u_{n}\le u_{n}\wedge 1$.
Therefore $u_{n}=1$ q.e. on $[0, T]\times E_{n}$, which implies
that $u_{n}\nearrow 1$ q.e. on $(0,T]\times E$. By the
assumptions, $R^{0, T}\mu\le R^{0, T}\nu$. Hence
\begin{align*}
\|\mu\|_{TV}=\lim_{n\rightarrow \infty}\langle\mu, u_{n}\rangle
&=\lim_{n\rightarrow\infty}\langle \mu, \hat{R}^{0,T}
\delta_{n}\rangle =\lim_{n\rightarrow\infty}\langle R^{0, T}\mu,
\delta_{n}\rangle\\
& \le \lim_{n\rightarrow\infty}\langle R^{0, T}\nu,
\delta_{n}\rangle =\lim_{n\rightarrow\infty}\langle \nu,
\hat{R}^{0, T}\delta_{n}\rangle=\|\nu\|_{TV},
\end{align*}
which proves the proposition.
\end{dow}

\nsubsection{Linear equations with measure data}
\label{sec4}

In this section we consider linear problems of the form
(\ref{eqi.2}) under the assumption that $\EE$ satisfies the
duality condition. The case of general forms will be considered in
a more general setting of semilinear equations in the next
section.

\begin{df}
Let $\varphi\in L^{1}(E;m)$, $\mu\in\mathcal{M}_{0, b}(E_{0,T})$
and assume that $\EE$ satisfies the duality condition $(\Delta)$.
We say that a measurable function $u:E_{0,T} \rightarrow\BR$ is a
solution of (\ref{eq1.10}) in the sense of duality if $u\in
L^{1}(E_{0,T};\eta\cdot m_1)$ and
\begin{equation}
\label{eq3.22} (u,\eta)_{L^2}=(\varphi,
\hat{G}^{0,T}\eta(T))_{L^{2}} +\int_{E_{0,T}}\hat{G}^{0,T}\eta\,d\mu
\end{equation}
for every non-negative $\eta \in L^2(E_{0,T};m_1)$ such that
$\hat{G}^{0,T}\eta$ is bounded.
\end{df}

\begin{stw}
\label{stw1.6} Let $\mu,\varphi,\EE$ be as in the above definition
and  let $u:E_{0,T}\rightarrow\BR$ be defined by
\mbox{\rm{(\ref{eq1.0})}}. Then $u$ is a unique solution of
$\mbox{\rm{(\ref{eq1.10})}}$ in the sense of duality.
\end{stw}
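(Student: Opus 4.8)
The strategy splits into two parts: first showing that the $u$ defined by \mbox{\rm(\ref{eq1.0})} satisfies the duality equation \mbox{\rm(\ref{eq3.22})}, and then establishing uniqueness. For the first part, the plan is to reduce to the finite-energy case already handled by Theorem \ref{tw1.1}. Because $\EE_\gamma$ has the dual Markov property for some $\gamma$, condition $(\Delta)$ holds by Remark \ref{uw3.7}, so by Proposition \ref{stw1.5} we have $\mu\in\RR(E_{0,T})$ and $\delta_{\{T\}}\otimes\varphi\cdot m\in\RR(E_{0,T})$; hence Proposition \ref{stw1.1} applies and, writing $\nu=\delta_{\{T\}}\otimes\varphi\cdot m+\mu$, we have $u=R^{0,T}\nu$ $m_1$-a.e. on $E_{0,T}$. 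The membership $u\in L^1(E_{0,T};\eta\cdot m_1)$ should follow from the fact that $\hat G^{0,T}\eta$ is bounded and $\nu$ is bounded, via $(u,\eta)_{L^2}=\langle\nu,\hat R^{0,T}\eta\rangle=\langle\nu,\hat G^{0,T}\eta\rangle<\infty$ (using the identity $\langle R^{0,T}_\alpha\mu,\nu\rangle=\langle\mu,\hat R^{0,T}_\alpha\nu\rangle$ from the preliminaries, with $\alpha=0$, together with \mbox{\rm(\ref{eq.eq111})}). That same computation, splitting $\nu$ into its $\varphi$-part and $\mu$-part and using that $\hat R^{0,T}\eta=\hat G^{0,T}\eta$ evaluates on $\delta_{\{T\}}\otimes\varphi\cdot m$ as $(\varphi,(\hat G^{0,T}\eta)(T))_{L^2(E;m)}$, is essentially the content of \mbox{\rm(\ref{eq3.22})}.

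To make the pairing computation rigorous one should first verify it for nonnegative $\eta\in L^2(E_{0,T};m_1)$ with $\hat G^{0,T}\eta$ bounded, approximating $\nu$ from below by finite-energy measures $\nu_n=\mathbf 1_{F_n}\cdot\nu$ along a nest $\{F_n\}$ (as in the proof of Theorem \ref{tw3.8}). For each $\nu_n$, the function $R^{0,T}\nu_n=\hat U$-type potential lies in $\FF_{0,T}$, the duality $\langle R^{0,T}\nu_n,\eta\rangle=\langle\nu_n,\hat G^{0,T}\eta\rangle$ holds by the defining property of $U_\alpha,\hat U_\alpha$ (at $\alpha=0$, justified because the lifetime is $\zeta_\tau\le T-\tau(0)$ so the $\alpha=0$ resolvents are finite), and then one passes to the limit by monotone convergence on both sides, using $R^{0,T}\nu_n\nearrow R^{0,T}\nu=u$ q.e. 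Splitting $\nu_n=\delta_{\{T\}}\otimes\varphi_n\cdot m+\mu_n$ and taking limits gives \mbox{\rm(\ref{eq3.22})} for $u$.

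For uniqueness, suppose $u_1,u_2$ both satisfy \mbox{\rm(\ref{eq3.22})} and set $u=u_1-u_2$; then $(u,\eta)_{L^2}=0$ for every nonnegative $\eta\in L^2(E_{0,T};m_1)$ with $\hat G^{0,T}\eta$ bounded. Here is where condition $(\Delta)$ does the real work: the functions $\eta_n$ furnished by $(\Delta)$ (with $\hat G^{0,T}_\alpha\eta_n$ bounded, $\eta_n>0$ $m_1$-a.e.\ on $F_n$, $\{F_n\}$ a nest) are, up to the harmless shift from $\alpha$ to $0$ — recall $\hat G^{0,T}_0$ is well defined because $T^{0,T}_t=\hat T^{0,T}_t=0$ for $t\ge T$ — admissible test functions whose positive parts exhaust $E_{0,T}$; testing against $\eta_n\wedge 1$ or against $\mathbf 1_{\{u>0\}}\eta_n$ and $\mathbf 1_{\{u<0\}}\eta_n$ (which still have bounded $\hat G^{0,T}$-image since $0\le\mathbf 1_A\eta_n\le\eta_n$) forces $u=0$ $m_1$-a.e.\ on each $F_n$, hence $u=0$ $m_1$-a.e.; since $u_1,u_2$ are moreover quasi-l.s.c.\ and quasi-c\`adl\`ag by Proposition \ref{stw1.1}, two $m_1$-versions that agree a.e.\ agree q.e. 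The main obstacle I anticipate is the careful handling of the $\alpha=0$ resolvents and the interchange of limits in the approximation step: one must be sure that $\hat G^{0,T}\eta_n$ really is bounded for the $\alpha=0$ operator (not just $\hat G^{0,T}_\alpha$) and that the monotone passage $\nu_n\uparrow\nu$ is legitimate on the $\varphi$-component, for which the identity $\mathrm{Cap}_\psi(\{T\}\times B)=0\iff m(B)=0$ noted in the proof of Proposition \ref{stw1.1} is the key point.
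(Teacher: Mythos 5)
Your proposal is correct and follows essentially the same route as the paper: approximate $\varphi$ and $\mu$ along a nest by finite-energy data, apply Theorem \ref{tw1.1} and test against $\hat G^{0,T}\eta$ to get the duality identity for the approximations, control $\int|u|\eta\,dm_1$ by $\|\nu\|_{TV}\|\hat G^{0,T}\eta\|_\infty$, and pass to the limit (the paper uses domination by $G^{0,T}(\delta_{\{T\}}\otimes|\varphi|\cdot m+|\mu|)$ rather than monotone convergence, a cosmetic difference once one splits into positive and negative parts). Your uniqueness argument via the $\eta_n$ of condition $(\Delta)$, including the shift from $\hat G^{0,T}_\alpha$ to $\hat G^{0,T}_0$, is exactly the content of the paper's one-line remark that uniqueness "easily follows from $(\Delta)$".
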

\begin{dow}
Uniqueness easily follows from condition $(\Delta)$. Let
$\{F_{n}\}$ be a generalized nest such that
$\mu_{n}=\mathbf{1}_{F_{n}}\cdot\mu\in S_{0}$ and
$\varphi_{n}=\mathbf{1}_{F_{n}}\varphi\in L^{2}(E;m)$. Set
\[
u_{n}(z)=E_{z}\mathbf{1}_{\{\zeta>T-\tau(0)\}}
\varphi_{n}(\bX_{T-\tau(0)})+
E_{z}\int_{0}^{\zeta_{\tau}}dA_{t}^{\mu_{n}}.
\]
By Theorem \ref{tw1.1}, $u_{n}\in\FF_{0,T}\cap D(0,T; H)$ and for
every $\psi\in\WW(E_{0,T})$,
\begin{align*}
&(u_{n}(0), \psi(0))_{L^{2}}+ \int_{0}^{T}(u_n(t), \frac{\partial
\psi}{\partial t}(t))_{L^{2}}\,dt + \int_{0}^{T}B^{(t)}(u_{n}(t),
\psi(t))\,dt\\
&\qquad= (\varphi_{n}, \psi(T)) +\int_{E_{0,T}}\psi(z)\,d\mu_{n}(z).
\end{align*}
Taking $\psi=\hat{G}^{0, T}\eta$ with non-negative $\eta\in
L^2(E_{0,T};m_1)$ such that $\hat{G}^{0,T}\eta$ is bounded as a
test function we obtain
\begin{equation}
\label{eq1.14} (u_{n}, \eta)_{L^2}= (\varphi_{n},
\hat{G}^{0, T}\eta(T))_{L^{2}}+ \int_{E_{0, T}}\hat{G}^{0,
T}\eta(z)\,d\mu_{n}(z).
\end{equation}
Observe that $|u_{n}|\le v$, $u_{n}\rightarrow u$ $m_1$-a.e, where
$v(z)=G^{0, T}\nu$ and $\nu=\delta_{T}\otimes|\varphi|\cdot
m+|\mu|$. Moreover, for every $\eta$ as above,
\begin{align*}
\int_{E_{0,T}}|u|\eta\,dm_1=(\hat{G}^{0,T}\nu,\eta)_{L^2} =
\langle \nu, \hat{G}^{0, T}\eta\rangle & \le
\|\nu\|_{TV}\|\hat{G}^{0,T}\eta\|_{\infty}\\
& \le
(\|\varphi\|_{L^1}+\|\mu\|_{TV})\|\hat{G}^{0,T}\eta\|_{\infty},
\end{align*}
so $u\in L^{1}(E_{0,T};\eta\cdot m_1)$. Letting
$n\rightarrow\infty$  in (\ref{eq1.14}) we get (\ref{eq3.22}).
\end{dow}

\begin{wn}
\label{wn.dfk} Let assumptions of Proposition \ref{stw1.6} hold
and let $u$  be a solution of \mbox{\rm{(\ref{eq1.10})}} in the
sense of duality. Then there exists an $m_1$-version of $u$
satisfying \mbox{\rm{(\ref{eq1.0})}}.
\end{wn}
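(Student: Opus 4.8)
The plan is to deduce the corollary immediately from Proposition \ref{stw1.6}, which already contains both the existence and the uniqueness of the solution in the sense of duality. Let $\bar u$ denote the function on $E_{0,T}$ given by the right-hand side of (\ref{eq1.0}). By Proposition \ref{stw1.6}, $\bar u$ is \emph{a} solution of (\ref{eq1.10}) in the sense of duality; moreover, the proof of that proposition shows that $\bar u\in L^1(E_{0,T};\eta\cdot m_1)$ for every admissible test function $\eta$. Hence it will suffice to show that any solution $u$ in the sense of duality satisfies $u=\bar u$ $m_1$-a.e., for then $\bar u$ is precisely the required version of $u$.

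To prove this equality I would subtract the identities (\ref{eq3.22}) written for $u$ and for $\bar u$ — the data $\varphi$ and $\mu$ being the same in both — obtaining $(u-\bar u,\eta)_{L^2}=0$ for every non-negative $\eta\in L^2(E_{0,T};m_1)$ with $\hat G^{0,T}\eta$ bounded (the difference $u-\bar u$ being $\eta\cdot m_1$-integrable for each such $\eta$ by the previous paragraph and the definition of a solution by duality). I would then invoke condition $(\Delta)$: let $\{F_n\}$ and $\{\eta_n\}$ be as in its statement, for the exponent $\alpha\ge 0$ appearing there. Since $\hat T_t^{0,T}=0$ for $t\ge T$, one has $\hat G^{0,T}\eta_n=\int_0^T\hat T_t^{0,T}\eta_n\,dt\le e^{\alpha T}\hat G_\alpha^{0,T}\eta_n$, so $\hat G^{0,T}\eta_n$ is bounded, i.e. $\eta_n$ is an admissible test function; and the same bound holds with $\eta_n$ replaced by $\mathbf 1_A\eta_n$ for any Borel set $A$, since $0\le\mathbf 1_A\eta_n\le\eta_n$. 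Testing the identity against $\mathbf 1_A\eta_n$ gives $\int_A(u-\bar u)\eta_n\,dm_1=0$ for all $A$, hence $(u-\bar u)\eta_n=0$ $m_1$-a.e., and as $\eta_n>0$ $m_1$-a.e. on $F_n$ we conclude $u=\bar u$ $m_1$-a.e. on $F_n$. Since $\{F_n\}$ is a nest, $E_{0,T}\setminus\bigcup_n F_n$ is $\EE$-exceptional, hence $m_1$-null, and therefore $u=\bar u$ $m_1$-a.e. on $E_{0,T}$.

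I do not expect any genuine obstacle here, since the argument is in essence the uniqueness already asserted in Proposition \ref{stw1.6}. The only point needing a word of justification is that the functions $\eta_n$ furnished by $(\Delta)$, for which merely $\hat G_\alpha^{0,T}\eta_n$ is known to be bounded, are legitimate test functions in the definition of a solution by duality, which requires $\hat G_0^{0,T}\eta_n=\hat G^{0,T}\eta_n$ to be bounded; as indicated above this is immediate from the finite time horizon ($\hat T^{0,T}_t$ vanishes for $t\ge T$).
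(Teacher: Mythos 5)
Your proposal is correct and is exactly the route the paper intends: the corollary is an immediate consequence of the uniqueness assertion in Proposition \ref{stw1.6} (the paper leaves the corollary without proof, and says the uniqueness ``easily follows from $(\Delta)$''). Your write-up simply supplies the details of that uniqueness argument, including the correct observation that $\hat G^{0,T}\eta_n\le e^{\alpha T}\hat G^{0,T}_{\alpha}\eta_n$ because $\hat T^{0,T}_t=0$ for $t\ge T$, so no new idea is needed.
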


\begin{uw}
Let $\{B^{(t)};t\in\BR\}$ be a family of non-negative
quasi-regular Dirichlet  forms  satisfying (\ref{eqp.0}). A
careful inspection of the proof of \cite[Theorem VI.1.2]{MR}
reveals that there exist a $B^{(0)}$-nest $\{E_k\}_{k\ge 1}$
consisting of compact metrizable sets in $E$ and a locally compact
separable metric space $Y^{\#}$ such that $Y^{\#}$ is a local
compactification of $Y=\bigcup_{k\ge 1}E_k$. Moreover, the trace
topologies of $E_k$ induced by $E$ and $Y^{\#}$ coincide and
$(B^{(s),\#},D(B^{(s),\#}))$, which is the image of
$(B^{(s)},D(B^{(s)}))$ under the inclusion map $i:Y\rightarrow
Y^{\#}$, is a regular Dirichlet form on $L^2(Y^{\#};m^{\#})$,
where $m^{\#}=m\circ i^{-1}$. By \cite[Theorem VI.1.6]{MR} the
Hunt process $\mathbb{M}^{(s),\#}=(\{P_{s,x}^{\#},\,x\in
E^{\#}\},\{X^{\#}_{s+t},\,t\ge 0\},\zeta^{\#})$ associated with
the regular form  $B^{(s),\#}$ is the trivial extension of the
special standard process $\mathbb{M}^{(s)}=(\{P_{s,x},\,x\in
E\},\{X_{s+t},\,t\ge 0\},\zeta)$ associated  with the form
$B^{(s)}$ Let  $\EE^{\#}$ be the time-dependent Dirichlet form on
$L^2(E^{1\#};m^{\#}_1)$, where $E^{1\#}=\BR\times E^{\#}$,
constructed from the family $\{B^{(s),\#};\, s\in\BR\}$ as in
Section 2 (see (\ref{eq2.02})). Then the process
$\mathbb{M}^{\#}=(\{\bX_{t}^{\#}, t\ge 0\},\{P_{z}^{\#},
z\in\BR\times E^{\#}\},\zeta^{\#})$ on
$\Omega'=\Omega\cup(E^{1\#}\setminus E^{1})$ associated with the
form $\EE^{\#}$ is given by
\[
\bX_{t}^{\#}(\omega)=\bX_{t}(\omega),\,\,t\ge 0,\,
\omega\in\Omega, \quad \bX_{t}^{\#}(\omega)=\omega, \,\, t\ge 0,\,
\omega\in E^{1\#}\setminus E^{1}
\]
and $P_{z}^{\#}=P_{z}$ for $z\in E^{1}$,
$P_{z}^{\#}=\delta_{\{z\}}$  for $z\in E^{1\#}\setminus E^{1}$. It
is clear that the trace topologies on $\BR\times E_{k}$ induced by
$\BR\times E$ and by $\BR\times Y^{\#}$ coincide. It follows that
\cite[Corollary VI.1.4]{MR} holds true for the form $\EE^{\#}$ and
capacity $\mbox{Cap}_{h,g}$ considered in \cite{MR} replaced by
$\mbox{Cap}_{\psi}$.
\end{uw}

\begin{uw}
\label{uw1.2} The above remark shows that one can apply the
so-called ``transfer method" (see \cite[Section VI]{MR},
\cite{KR:SM})   to the form $\EE$ defined by (\ref{eq2.02}).
Therefore the results of the present paper hold true for $\EE$
with $B^{(t)}$ being quasi-regular Dirichlet forms.
\end{uw}

\nsubsection{Semilinear equations with measure data}
\label{sec5}

In this section we assume that $\mu\in\mathcal{R}(E_{0, T})$,
$\delta_{\{T\}}\otimes\varphi\cdot m\in\mathcal{R}(E_{0, T})$ and
$f\in \BB(E_{0,T})$. In what follows given $u\in \BB(E_{0,T})$ we
set
\[
f_{u}(t,x)=f(t,x,u(t,x)), \quad (t,x)\in E_{0,T}.
\]

\subsubsection{General semi-Dirichlet forms}

\begin{df}
We say that $u$ is a solution of the Cauchy problem
\begin{equation}
\label{eq2.1} -\frac{\partial u}{\partial t}-L_{t}u =f(t, x, u)+
\mu, \quad u(T)=\varphi
\end{equation}
if $f_{u}\in\mathcal{R}(E_{0, T})$ and for q.e. $z\in E_{0,T}$\,,
\begin{equation}
\label{eq2.9}
u(z)=E_{z}\Big(\mathbf{1}_{\{\zeta>T-\tau(0)\}}\varphi(\bX_{T-\tau(0)})
+\int_{0}^{\zeta_{\tau}}f(\bX_{t}, u(\bX_{t}))\,dt
+\int_{0}^{\zeta_{\tau}}dA_{t}^{\mu}\Big).
\end{equation}
\end{df}

\begin{df}
We say that $f:E_{0,T}\rightarrow\BR$ is quasi-integrable  ($f\in
qL^{1}(E_{0, T};m_1)$ in notation) if $f\in \BB(E_{0,T})$ and
$P_{z}(\int_{0}^{\zeta_{\tau}}|f|(\bX_{r})\,dr<\infty)=1$ for q.e.
$z\in E_{0,T}$\,.
\end{df}
Let us consider the following hypotheses.
\begin{enumerate}
\item[(H1)]$u\mapsto f(t,x,u)$ is
continuous for every $(t, x)\in E_{0, T}$,
\item[(H2)]There is $\alpha\in\BR$ such that
\[
(f(t,x,y)-f(t,x,y'))(y-y')\le\alpha|y-y'|^{2}
\]
for every $(t,x)\in E_{0,T}$ and $y, y'\in\BR$.
\item[(H3)]$f(\cdot, 0)\in\mathcal{R}(E_{0, T})$.
\item[(H4)]$f(\cdot, y)\in qL^{1}(E_{0, T};m_1)$ for every $y\in\BR$.
\end{enumerate}

\begin{uw}
It is clear that $\mathcal{B}(E_{0,T})\cap\mathcal{R}(E_{0,T})
\subset qL^1(E_{0,T};m_1)$. By Proposition \ref{stw1.5}, under the
dual condition $(\Delta)$,
$L^1(E_{0,T};m_1)\subset\mathcal{R}(E_{0,T})$. It follows that
\begin{equation}
\label{eq4.3} L^1(E_{0,T};m_1)\subset qL^1(E_{0,T};m_1)
\end{equation}
under $(\Delta)$. Let us consider the following condition
\begin{equation}
\label{eq.qq} \forall\, \varepsilon>0\,\, \exists\,
F_{\varepsilon}\subset E_{0,T}, \,\,
F_{\varepsilon}\mbox{-closed},\,\,
\mbox{Cap}_{\psi}(E_{0,T}\setminus
F_{\varepsilon})<\varepsilon,\,\, \mathbf{1}_{F_{\varepsilon}}f\in
L^1(E_{0,T};m_1).
\end{equation}
Assume that $f\in\mathcal{B}(E_{0,T})$ satisfies (\ref{eq.qq}) and
the dual condition $(\Delta)$ holds. Let $\{F_{n}\}$ be an
increasing sequence of closed subsets of $E_{0,T}$ such that
Cap$_{\psi}(G_{n})\rightarrow 0$, where $G_{n}=E_{0,T}\setminus
F_{n}$. By (\ref{eq4.3}),
\begin{align*}
P_{m_1}(\int_{0}^{\zeta_{\tau}}|f|(\mathbf{X}_{r})\,dr=\infty) &
\le P_{m_1}(\int_{0}^{\zeta_{\tau}}
|f|\mathbf{1}_{F_{n}}(\mathbf{X}_{r})\,dr=\infty)\\
&\quad+P_{m_1}(\int_{0}^{\zeta_{\tau}}
|f|\mathbf{1}_{G_{n}}(\mathbf{X}_{r})\,dr=\infty).
\end{align*}
The first term on the right-hand side of the above inequality
equals zero. From the above and \cite[Remark IV.3.6]{Stannat} it
follows that
\begin{align*}
P_{m_1}(\int_{0}^{\zeta_{\tau}}|f|(\mathbf{X}_{r})\,dr=\infty)
=P_{m_1}(\int_{\sigma_{G_{n}}}^{\zeta_{\tau}}
|f|\mathbf{1}_{G_{n}}(\mathbf{X}_{r})\,dr=\infty)& \le
P_{m_1}(\lim_{n\rightarrow\infty}\sigma_{G_{n}}<\zeta_{\tau})\\
&=0.
\end{align*}
Consequently,
$P_{z}(\int_{0}^{\zeta_{\tau}}|f|(\mathbf{X}_{r})\,dr=\infty)=0$
for $m_1$-a.e., and hence for q.e. $z\in E_{0,T}$ by standard
argument. Thus $f\in qL^1(E_{0,T};m_1)$.
\end{uw}

From Proposition \ref{stw1.5} we know that if $\EE$ satisfies
$(\Delta)$ then $\MM_{0,b}(E_{0,T})\subset \mathcal{R}(E_{0,T})$.
The following example shows that the inclusion may be is strict.

\begin{prz}
Let $f$ be a non-negative measurable function on $E_{0,T}$. Then
\[
(G^{0,T}{f},1)_{L^2}=(f,\hat{G}^{0,T}1)_{L^2}.
\]
Let $D$ and $L_t$ be as in Example \ref{prz3.8}. Then for $x\in
D$,
\[
\hat{R}^{0,T}(s,x)\le c_1\int_{0}^{s}\!\int_{D}
t^{-d/2}\exp(\frac{-c_2|y-x|^2}{2t})\,dt\,dy \le
c_3\int_{D}G^1_{D}(x,y)\,dy\le c_{4}\delta(x),
\]
where $\delta(x)=\mbox{dist}(x,\partial D)$ and
$G^1_{D}(\cdot,\cdot)$ is the Green function for the operator
$\Delta$ on $D$. If $L_{t}=\Delta^{\alpha/2}$, where $\alpha\in
(0,2)$, then for $x\in D$ we have
\[
\hat{R}^{0,T}1(s,x)\le \int_{D}G^2_{D}(x,y)\,dy \le
c\delta^{\alpha/2}(x),
\]
where $G^2_{D}(\cdot,\cdot)$ is the Green function for the
operator $\Delta^{\alpha/2}$ on $D$ (For the last inequality see
\cite[Proposition 4.9]{Kulczycki}). We see that
$L^1(E_{0,T};\delta\cdot m_1)\subset\mathcal{R}(E_{0,T})$ if $L_t$
is the operator of Example \ref{prz3.8} and
$L^1(E_{0,T},\delta^{\alpha/2}\cdot m_1)\subset
\mathcal{R}(E_{0,T})$ if $L_{t}=\Delta^{\alpha/2}$, so in both
cases $\MM_{0,b}(E_{0,T})\subsetneq \mathcal{R}(E_{0,T})$.
\end{prz}

The next example shows that in general quasi-integrable functions
need not be locally integrable.

\begin{prz}
Let $L_t$ be as in Remark \ref{prz3.8} and let $D=\{x\in\BRD;\,
|x|<1\}$, $d\ge 2$.  Set $f(t,x)=|x|^{-d},\, (t,x)\in E_{0,T}$.
Direct calculation shows that
$\int_{0}^{T}\int_{B(0,\varepsilon)}f(t,x)\,dt\,dx=\infty$ for
every $\varepsilon\in (0,1)$, i.e. $f$ is not locally integrable.
It is, however, quasi-integrable, because if  $F_{n}=\{(t,x)\in
E_{0,T}; t\in [0,T],|x|\ge\frac1n\}$ then $\mathbf{1}_{F_{n}}f\in
L^{1}(E_{0,T};m_1)$, $n\ge1$. Moreover, if we set
$G_{n}=E_{0,T}\setminus F_{n}$ then $\bar{G}_{n+1}\subset G_{n}$
and $\bigcap_{n}G_{n}=\bigcap_{n}\bar{G}_{n}$. Since Cap$_{\psi}$
is a Choquet capacity (see \cite[Proposition III.2.8]{Stannat}),
it follows that
\[
\lim_{n\rightarrow \infty}\mbox{Cap}_{\psi}(G_{n})
=\mbox{Cap}_{\psi}(\bigcap_{n}G_{n})=\mbox{Cap}_{\psi}((0,T]\times\{0\})=0.
\]
\end{prz}

From the above example it follows in particular that in general
$\mathcal{R}(E_{0,T})\subsetneq qL^1(E_{0,T})$. For instance, the
inclusion is strict if $L$ is the Laplace operator on smooth
bounded domain, because in this case each function from
$\mathcal{R}(E_{0,T})$ is locally integrable thanks to the
positivity and continuity of the corresponding Green function.

Let $(\Omega,\FF=\{\FF_{t},t\in[0,T]\}, P)$ be a fixed stochastic
basis.   Suppose we are given an $\FF_T$ measurable random
variable $\xi$, an $\FF$ progressively measurable function
$F:\Omega\times [0, T]\times\BR\rightarrow\BR$ and an $\FF$
adapted c\`adl\`ag process $A$ of finite variation.

\begin{df}
We say that a pair $(Y,M)$ of processes on $[0,T]$ is a solution
of the backward stochastic differential equation
\begin{equation}
\label{eq2.2} Y_{t}=\xi+\int_{t}^{T}F(r, Y_{r})\,dr+
\int_{t}^{T}dA_{r}-\int_{t}^{T}dM_{r}, \quad t\in[0,T]
\end{equation}
(BSDE$(\xi,F+dA)$ in notation) if $Y$ is a progressively
measurable  process of class $\mbox{(D)}$, $t\rightarrow
F(t,Y_{t})\in L^{1}(0, T)$, $M$ is a $\FF$-martingale such that
$M_0=0$ and (\ref{eq2.2}) holds $P$-a.s.
\end{df}
We will need the following assumptions.
\begin{enumerate}
\item[(A1)]$y\rightarrow F(t, y)$ is continuous
for a.e. $t\in[0, T]$,
\item[(A2)]there is $\alpha\in\BR$ such that
\[
(F(t,y)-F(t,y'))(y-y')\le\alpha|y-y'|^{2}
\]
for a.e. $t\in[0,T]$ and every $y,y'\in\BR$,
\item[(A3)] $E\int_{0}^{T}|F(t,0)|\,dt<\infty$,
$E|\xi|+E|A|_{T}<\infty$ ($|A|_T$ denotes the variation of $A$ on
$[0,T]$),
\item[(A4)]$[0,T]\ni t\rightarrow F(t, y)\in L^{1}(0,T)$
for every $y \in\BR$.
\end{enumerate}

\begin{tw}
\label{tw2.1} Assume $\mbox{\rm{(A1)--(A4)}}$. Then there exists a
unique solution $(Y, M)$ of $\mbox{\rm{BSDE}}$$(\xi, f+dA)$.
Moreover, $Y, M\in S^{q}$, $q\in(0,1)$ and
\[
E\int_{0}^{T}|F(t, Y_{t})|\,dt \le C(\alpha,
T)\Big(E\int_{0}^{T}|F(t,0)|\,dt+ E\int_{0}^{T}d|A|_{t}\Big).
\]
\end{tw}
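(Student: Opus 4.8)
The plan is to obtain existence, uniqueness and the a priori estimate in three stages, following the standard $L^1$-theory for BSDEs with monotone generators (as developed by Briand–Delyon–Hu–Pardoux–Stoica and subsequent refinements), but adapted to the present setting where the generator need not satisfy a global integrability condition, only the pointwise $L^1$ condition (A4) together with (A1)–(A3). First I would dispose of \emph{uniqueness}: given two solutions $(Y,M),(Y',M')$, set $\delta Y=Y-Y'$, $\delta M=M-M'$; then $\delta Y_t=\int_t^T(F(r,Y_r)-F(r,Y'_r))\,dr-\int_t^T d\delta M_r$ with $\delta Y$ of class (D). Applying the Tanaka/Meyer–Itô formula to $|\delta Y_t|$ (or, more robustly in the $L^1$-framework, to $(\delta Y_t)^+$ and $(\delta Y_t)^-$ separately, or to $\varphi_\varepsilon(\delta Y)$ for a smooth approximation of $|\cdot|$), the martingale part integrates to zero in expectation after a localization argument using class (D), and the monotonicity assumption (A2) gives $E|\delta Y_t|\le\alpha\int_t^T E|\delta Y_r|\,dr$, whence $\delta Y\equiv0$ by Gronwall and then $\delta M\equiv0$.

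For \emph{existence} I would first reduce to $\alpha<0$ (or $\alpha=0$) by the exponential change $\tilde Y_t=e^{\lambda t}Y_t$ with $\lambda$ large, which transforms the generator into one satisfying (A2) with a negative constant while preserving (A1), (A3), (A4); then build the solution by a double approximation. Truncate the terminal value and the finite–variation process, $\xi^n=T_n(\xi)$, $A^n$ with $|A^n|_T\le n$, so that the data are bounded, and truncate the generator in the space variable, $F^{n}(t,y)=F(t,T_n(y))$ if needed, to get a generator which is continuous, monotone, and has $t\mapsto\sup_{|y|\le r}|F^n(t,y)|$ integrable — i.e. satisfying the stronger hypothesis under which existence of an $L^2$ (or at least $S^q$) solution $(Y^n,M^n)$ is classical (via a further Lipschitz regularization in $y$, monotone convergence à la Lepeltier–San Martín, and the usual fixed point). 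The crucial a priori bound is obtained by applying Itô's formula to $|Y^n_t|$ (more precisely to $(\eta+|Y^n_t|)$ or to $u_\varepsilon(Y^n)$ with $u_\varepsilon$ smooth) and using $\mathrm{sgn}(y)F(t,y)\le \mathrm{sgn}(y)F(t,0)+\alpha|y|\le |F(t,0)|$ (after the reduction $\alpha\le0$); taking expectations and using class (D) to kill the martingale term yields
\[
E|Y^n_t|+E\!\int_t^T\!\mathbf 1_{\{Y^n_r\neq0\}}\,d(\text{loc. time part})\le E|\xi^n|+E\!\int_t^T\!|F(r,0)|\,dr+E\!\int_t^T\!d|A^n|_r,
\]
and in particular the bound $E\int_0^T|F^n(t,Y^n_t)|\,dt\le C(\alpha,T)(E\int_0^T|F(t,0)|\,dt+E|A|_T)$ stated in the theorem, obtained by testing against $\mathrm{sgn}(Y^n)$ and rearranging. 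Standard $S^q$-estimates ($q\in(0,1)$) for $Y^n$ and $M^n$ then follow from this $L^1$ control plus the Burkholder–Davis–Gundy inequality in its $L^q$, $q<1$, form (the Garcia–type inequality for class (D) processes), giving uniform bounds $\|Y^n\|_{S^q}+\|M^n\|_{S^q}\le C$.

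Passing to the limit is the \emph{main obstacle}: one must show $(Y^n,M^n)$ is Cauchy, or at least convergent, and that the limit solves the untruncated equation with the limiting generator term $\int_t^T F(r,Y_r)\,dr$. Here I would prove that $\{Y^n\}$ is Cauchy in $S^q$ by writing the equation for $Y^n-Y^k$, applying the Tanaka formula to $|Y^n-Y^k|$, and combining the monotonicity of $F$ with a uniform-integrability argument for the generators (this is where (A1) continuity enters, to identify the limit of $F(r,Y^n_r)$ with $F(r,Y_r)$ via, e.g., a subsequence converging $dt\otimes dP$-a.e. together with the de la Vallée–Poussin / Vitali argument based on the uniform $L^1$ bound just proved) and a careful handling of the stochastic integral differences by BDG in $L^q$. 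Once $Y^n\to Y$ in $S^q$ and $\int_\cdot^T F(r,Y^n_r)\,dr\to\int_\cdot^T F(r,Y_r)\,dr$ in $L^1$-norm uniformly (so in particular $t\mapsto F(t,Y_t)\in L^1(0,T)$ a.s.), the martingale parts $M^n=Y^n_0-Y^n_\cdot+\int_0^\cdot F+\int_0^\cdot dA$ converge, the limit $M$ is a martingale with $M_0=0$, and $(Y,M)$ solves BSDE$(\xi,F+dA)$; the a priori estimate passes to the limit by Fatou. I expect the delicate point to be verifying that no extra singular term appears in the limit — i.e. that the local-time/bracket contributions controlled above indeed force the approximating local times to vanish so that the Tanaka formula for the limit closes exactly — and the uniform integrability needed to commute the limit with $F$; both are handled by the $E\int_0^T|F(t,Y^n_t)|\,dt$ bound, which is why that estimate is proved first and occupies the center of the argument.
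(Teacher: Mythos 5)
The paper's own proof is two lines: it makes the exponential change of variables $\tilde Y_t=e^{\lambda t}Y_t$ to reduce to the case $\alpha=0$ in (A2), and then invokes \cite[Theorem 2.7]{KR:JFA}, which is exactly this existence/uniqueness/estimate statement for monotone generators under the weak integrability hypothesis (A4). Your first step therefore coincides with the paper's; everything after that is your attempt to reprove the cited theorem from scratch. Your uniqueness argument (Tanaka/Meyer--It\^o applied to $|\delta Y|$, monotonicity, Gronwall) and your derivation of the a priori bound $E\int_0^T|F(t,Y_t)|\,dt\le C(\|F(\cdot,0)\|_{L^1}+E|A|_T)$ from $\mathrm{sgn}(y)F(t,y)\le|F(t,0)|$ after the reduction to $\alpha\le0$ are sound and standard, as are the $S^q$, $q<1$, bounds.

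There is, however, a genuine gap in your existence construction. You propose to truncate the generator as $F^n(t,y)=F(t,T_n(y))$ ``to get a generator which is continuous, monotone, and has $t\mapsto\sup_{|y|\le r}|F^n(t,y)|$ integrable.'' But $\sup_{|y|\le r}|F^n(t,y)|=\sup_{|y|\le r\wedge n}|F(t,y)|$, and under (A4) alone this supremum need not be integrable: (A4) only gives $F(\cdot,y)\in L^1(0,T)$ for each \emph{fixed} $y$. The whole point of the hypothesis, stressed in the paper's introduction (condition (\ref{eqi.4}) versus the stronger (\ref{eqi.5}) assumed in \cite{BBGGPV,BG}), is precisely that the locally uniform integrability in $y$ is \emph{not} assumed; monotonicity gives you only the one-sided bound $\mathrm{sgn}(y)F(t,y)\le|F(t,0)|$, which does not control $|F(t,y)|$ pointwise. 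So the truncation in the space variable does not land you in the ``classical'' regime, and the subsequent construction of the approximating solutions $(Y^n,M^n)$ is not justified as written. To close this you need a different approximation of the generator — e.g.\ the scheme used in \cite[Theorem 2.7]{KR:JFA}, which combines truncation of the data with a monotone (comparison-theorem based) passage to the limit in the generator rather than a reduction to the $\sup_{|y|\le r}$-integrable case — and this is exactly the content the paper outsources to the citation.
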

\begin{proof}
By using the standard change of variable one can reduce the proof
to the case where $\alpha=0$ in (H2). But then the desired result
follows from \cite[Theorem 2.7]{KR:JFA}.
\end{proof}
\begin{df}
Let $z\in E$. We say that a pair $(Y, M)$ is a solution of
BSDE$_{z}(\varphi,f+d\mu)$ if $(Y, M)$ is a solution of the BSDE
\[
Y_{t}=\mathbf{1}_{\{\zeta>T-\tau(0)\}}\varphi(\bX_{\zeta_{\tau}})
+\int_{t}^{\zeta_{\tau}}f(\bX_r, Y_{r})\,dr+
\int_{t}^{\zeta_{\tau}}dA^{\mu}_{r}-\int_{t}^{\zeta_{\tau}}dM_{r},
\quad t\in[0,\zeta_{\tau}]
\]
on the probability space $(\Omega,\FF,P_{z})$.
\end{df}

\begin{stw}
\label{stw2.1} Assume $\mbox{\rm{(H1)--(H4)}}$. Then for q.e.
$z\in E_{0, T}$ there exists a unique solution $(Y^{z},M^{z})$  of
$\mbox{\rm{BSDE}}_{z}(\varphi, f+d\mu)$. Moreover, there exists a
pair of processes $(Y, M)$ such that for q.e. $z\in E_{0,T}$,
\[
(Y_{t}, M_{t})=(Y_{t}^{z}, M^{z}_{t}), \quad t\in[0, T-\tau(0)],
\quad P_{z}\mbox{\rm-a.s.}
\]
\end{stw}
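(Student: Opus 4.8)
The plan is to produce $(Y^z,M^z)$ by applying Theorem \ref{tw2.1} on each probability space $(\Omega,\FF,P_z)$, and then to build a single pair $(Y,M)$ out of a measurable fixed-point representation of the associated value function together with the (strong) Markov property of $\mathbb{M}$.

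First I would fix $z$ and verify that the data $\xi=\mathbf{1}_{\{\zeta>T-\tau(0)\}}\varphi(\bX_{\zeta_\tau})$, $F(r,y)=f(\bX_r,y)$ and $A=A^\mu$, all considered on $[0,\zeta_\tau]$, satisfy (A1)--(A4) under $P_z$ for q.e.\ $z$. Conditions (A1) and (A2) are immediate from (H1) and (H2). For (A3) one uses that $\delta_{\{T\}}\otimes|\varphi|\cdot m$, $|\mu|$ and $|f(\cdot,0)|\cdot m_1$ all lie in $\mathcal{R}(E_{0,T})$ --- the first two by the standing assumptions of Section \ref{sec5}, the third by (H3) --- so that $E_z|\xi|$, $E_zA^{|\mu|}_{\zeta_\tau}$ and $E_z\int_0^{\zeta_\tau}|f(\bX_r,0)|\,dr$ are finite q.e. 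The delicate point is (A4): (H4) gives, for each fixed $y$, an exceptional set off which $\int_0^{\zeta_\tau}|f(\bX_r,y)|\,dr<\infty$ $P_z$-a.s.; enlarging the union over $y\in\mathbb{Q}$ to a properly exceptional set $N$, for $z\notin N$ and arbitrary $y$ with $q_1<y<q_2$, $q_i\in\mathbb{Q}$, I would use that by (H2) the map $y\mapsto f(t,x,y)-\alpha y$ is nonincreasing, whence $|f(t,x,y)|\le|f(t,x,q_1)|+|f(t,x,q_2)|+|\alpha|(|y-q_1|+|y-q_2|)$ for every $(t,x)$; since $\zeta_\tau\le T$ the constant term integrates to a finite quantity along the path, so (A4) holds for all $y$ simultaneously off $N$. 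Theorem \ref{tw2.1} then yields, for q.e.\ $z$, a unique $(Y^z,M^z)$ and the a priori bound on $E_z\int_0^{\zeta_\tau}|f(\bX_r,Y^z_r)|\,dr$.

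For the $z$-free version I would first reduce, by the exponential change of variable used in the proof of Theorem \ref{tw2.1}, to the case $\alpha\le0$ in (H2), and treat first $f$ that is in addition globally Lipschitz in $y$. Then $u$ arises as the q.e.\ limit of the Picard iterates $u_{k+1}(z)=E_z\big(\mathbf{1}_{\{\zeta>T-\tau(0)\}}\varphi(\bX_{\zeta_\tau})+\int_0^{\zeta_\tau}f(\bX_r,u_k(\bX_r))\,dr+\int_0^{\zeta_\tau}dA^\mu_r\big)$, each $u_k$ being measurable and finite q.e.\ (by the estimates above and Proposition \ref{stw1.1}); by the Markov property $Y^k_t:=u_k(\bX_t)$ satisfies $Y^k_t=E_z(\cdot\,|\,\FF_t)$ $P_z$-a.s., i.e.\ $Y^k$ is the $k$-th Picard iterate of $\mathrm{BSDE}_z(\varphi,f+d\mu)$, and letting $k\to\infty$ shows that $Y_t:=u(\bX_t)$ is the first component of $(Y^z,M^z)$ for q.e.\ $z$. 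For a general $f$ satisfying (H1)--(H4) I would repeat this with the usual Yosida approximations $f^{(k)}$, whose solutions converge monotonically, obtaining again a measurable $u$, finite q.e., with $Y^z_t=u(\bX_t)$ $P_z$-a.s. Finally, writing $H:=\mathbf{1}_{\{\zeta>T-\tau(0)\}}\varphi(\bX_{\zeta_\tau})+\int_0^{\zeta_\tau}f(\bX_r,u(\bX_r))\,dr+A^\mu_{\zeta_\tau}$, a fixed $\FF_{\zeta_\tau}$-measurable random variable, I would invoke \cite[Lemma A.3.5]{Fukushima} to choose a c\`adl\`ag modification $N$ of $E_\cdot(H\,|\,\FF_t)$ independent of $z$ and put $M_t:=N_t-N_0$; then $(u(\bX_\cdot),M)$ solves $\mathrm{BSDE}_z(\varphi,f+d\mu)$ for q.e.\ $z$, hence coincides $P_z$-a.s.\ with $(Y^z,M^z)$ by the uniqueness already proved.

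I expect the main obstacle to be the second step: exhibiting the value function $u$ as a genuinely measurable, finite-q.e.\ function independent of $z$ and propagating the identity $Y^z_t=u(\bX_t)$. This requires careful bookkeeping of the (properly) exceptional sets through the approximation, the q.e.\ convergence of the approximate value functions $u_k$ (for which the a priori estimate of Theorem \ref{tw2.1} and the integrability coming from (H3) and the hypotheses on $\mu$ and $\varphi$ are essential), and a correct application of the strong Markov property to pass from ``the value at time $0$ is a function of $z$'' to ``the value at time $t$ is $u(\bX_t)$''.
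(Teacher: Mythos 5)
Your argument is correct, and its first half is exactly the paper's: one checks that $\xi=\mathbf{1}_{\{\zeta>T-\tau(0)\}}\varphi(\bX_{\zeta_\tau})$, $F=f(\cdot,\bX,\cdot)$, $A=A^\mu$ satisfy (A1)--(A4) under $P_z$ for q.e.\ $z$ and invokes Theorem \ref{tw2.1}. The paper states this verification in one sentence; your treatment of (A4) --- taking the union of the exceptional sets over rational $y$ and using the monotonicity of $y\mapsto f(t,x,y)-\alpha y$ from (H2) to control all real $y$ at once --- is a detail the paper suppresses but which is genuinely needed if one wants (A4) to hold for all $y$ off a single properly exceptional set, and your bound is correct. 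The divergence is in the second half: for the existence of a $z$-independent pair $(Y,M)$ the paper simply cites \cite[Remark 3.6]{KR:JFA}, whereas you reconstruct the argument from scratch via the value function $u(z)=E_zY_0^z$, Picard iteration in the Lipschitz case, monotone approximation in the general monotone case, the Markov property to propagate $Y^z_t=u(\bX_t)$, and \cite[Lemma A.3.5]{Fukushima} to select a c\`adl\`ag martingale version independent of $z$. This is more work than the statement strictly requires, and it partially anticipates material the paper defers to Lemma \ref{lm2.1} and Theorem \ref{tw2.2} (where the identity $Y_t=u(\bX_t)$ and the representation of $M$ are actually established); but it has the virtue of being self-contained, and it is essentially the mechanism behind the cited remark. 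The bookkeeping you flag at the end --- q.e.\ convergence of the approximate value functions and the correct use of the strong Markov property --- is indeed where the real effort lies, and your a priori estimate from Theorem \ref{tw2.1} together with (H3) and the standing assumptions on $\mu$, $\varphi$ is the right tool to control it.
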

\begin{proof}
If $\varphi,f,\mu$ satisfy (H1)--(H4) then
$\xi=\mathbf{1}_{\{\zeta>T-\tau(0)\}}\varphi(\bX_{\zeta_{\tau}})$,
$F=f(\cdot,\bX,\cdot)$, $A=A^{\mu}$ satisfy (A1)--(A4) under the
measure $P_{z}$ for q.e. $z\in E_{0,T}$. Therefore the first part
of the proposition follows from Theorem \ref{tw2.1}. The second
part follows from \cite[Remark 3.6]{KR:JFA}.
\end{proof}

\begin{lm}
\label{lm2.1} Assume \mbox{\rm(H1)--(H4)} and let $(Y, M)$ be the
pair of Proposition \ref{stw2.1}. Then for q.e. $z\in E_{0, T}$
and every $h\in[0,T-\tau(0)]$,
\[
Y_{t}\circ \theta_{h}=Y_{t+h}, \quad t\in[0, T-\tau(0)-h],
\quad P_{z}\mbox{\rm{-a.s.}}
\]
\end{lm}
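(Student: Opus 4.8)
The plan is to deduce the shift identity from uniqueness of solutions of the relevant BSDE (Theorem \ref{tw2.1}) together with the Markov property of $\mathbb M$, along the lines of the proof of Proposition \ref{stw2.1}. Fix $h\in[0,T-\tau(0)]$. The elementary ingredients are the shift relations, valid $P_z$-a.s. on $\{h<\zeta_\tau\}$: since $\tau(h)=\tau(0)+h$ one has $\zeta_\tau\circ\theta_h=\zeta_\tau-h$; since $\bX$ is the process $\mathbb M$ one has $\bX_r\circ\theta_h=\bX_{r+h}$; by the additivity property (c) of the additive functional $A^\mu$ one has $A^\mu_r\circ\theta_h=A^\mu_{r+h}-A^\mu_h$; and combining these, $\xi\circ\theta_h=\xi$ for $\xi=\mathbf 1_{\{\zeta>T-\tau(0)\}}\varphi(\bX_{\zeta_\tau})$. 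On the complementary set $\{h\ge\zeta_\tau\}$ one necessarily has $\zeta\le T-\tau(0)$, hence $\xi=0$, and both $Y_{t+h}$ and $Y_t\circ\theta_h$ reduce to $0$ for $t\in[0,T-\tau(0)-h]$; so it suffices to argue on $\{h<\zeta_\tau\}$.

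First I would record that $\bar Y_t:=Y_{t+h}$, $\bar M_t:=M_{t+h}-M_h$, $t\in[0,\zeta_\tau-h]$, is a solution, on the basis $(\Omega,\{\FF_{t+h}\}_{t\ge0},P_z)$, of the backward equation with terminal time $\zeta_\tau-h$, terminal value $\xi$, coefficient $(r,y)\mapsto f(\bX_{r+h},y)$ and drift $r\mapsto A^\mu_{r+h}-A^\mu_h$. This is immediate upon evaluating the equation defining $(Y,M)$ (Proposition \ref{stw2.1}) at time $t+h$; the class $(\mathrm D)$ property of $\bar Y$ and the integrability $r\mapsto f(\bX_{r+h},\bar Y_r)\in L^1(0,\zeta_\tau-h)$ are inherited from $(Y,M)$, and $\bar M$ is an $\{\FF_{t+h}\}$-martingale under $P_z$ because $M$ is an $\{\FF_t\}$-martingale under $P_z$.

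The core of the argument is to show that $\hat Y_t:=Y_t\circ\theta_h$, $\hat M_t:=M_t\circ\theta_h$ solves the very same equation. Here I would use that by Proposition \ref{stw2.1} there is a properly exceptional set $N$ such that for every $z'\in E_{0,T}\setminus N$ the pair $(Y,M)$ satisfies $P_{z'}$-a.s. the equation of $\mathrm{BSDE}_{z'}(\varphi,f+d\mu)$. Since $N$ is properly exceptional, $P_z(\bX_h\in N)=0$ for q.e. $z$, and the Markov property gives $E_z[\Psi\circ\theta_h\mid\FF_h]=E_{\bX_h}[\Psi]$ for bounded $\sigma(\bX_s:s\ge0)$-measurable $\Psi$; applying this with $\Psi$ equal to the indicator of the event that $(Y,M)$ satisfies the $\mathrm{BSDE}_{\bX_h}$-equation on $[0,\zeta_\tau]$ shows that, $P_z$-a.s., $(\hat Y,\hat M)$ satisfies $\omega$-wise the $\mathrm{BSDE}_{\bX_h(\omega)}$-equation. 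Substituting the shift relations of the first paragraph transforms this, term by term, into the equation of the second paragraph, while the Markov property also shows that $\hat M$ is an $\{\FF_{t+h}\}$-martingale under $P_z$. Uniqueness of solutions of this shifted BSDE — which follows from Theorem \ref{tw2.1} in the same way as uniqueness in Proposition \ref{stw2.1} does — then yields $\hat Y=\bar Y$, that is, $Y_t\circ\theta_h=Y_{t+h}$ for all $t\in[0,\zeta_\tau-h]$, $P_z$-a.s., and by the first paragraph also for $t\in(\zeta_\tau-h,T-\tau(0)-h]$. Since $h$ is fixed in the statement nothing more is needed; were the null set required to be uniform in $h$, one would intersect over a countable dense set of values of $h$ and use right-continuity of $t\mapsto Y_t$.

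I expect the main difficulty to lie in that core step: carefully reconciling the two quantifiers (for q.e. $z$, and $P_z$-almost surely) when pushing the pathwise BSDE identity through $\theta_h$ — that is, using proper exceptionality of $N$ to guarantee $\bX_h\notin N$, and then verifying that after conditioning via the Markov property the resulting $\omega$-wise equation agrees, in terminal value, coefficient, drift, and martingale part, with the equation satisfied by $\bar Y$. Once this identification is in place, the conclusion follows purely by invoking the uniqueness already established.
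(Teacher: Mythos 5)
Your proposal is correct and follows essentially the same route as the paper: the paper's proof consists precisely of invoking the uniqueness of solutions of $\mathrm{BSDE}_{z}(\varphi,f+d\mu)$ and then referring to the argument of \cite[Proposition 3.5]{KR:JFA}, which is exactly the shift-and-uniqueness scheme you carry out (showing that $Y_{\cdot+h}$ and $Y_\cdot\circ\theta_h$ solve the same shifted BSDE via the additivity of $A^\mu$, the shift relations for $\bX$ and $\zeta_\tau$, and the Markov property combined with proper exceptionality of the bad set). You have simply written out in detail what the paper delegates to the cited reference.
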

\begin{proof}
Since for q.e. $z\in E_{0,T}$ the solution of
BSDE$_{z}(\varphi,f+d\mu)$ is unique, to prove the proposition it
suffices to repeat the proof of \cite[Proposition 3.5]{KR:JFA}
(see also the proof of \cite[Proposition 3.24]{KR:JEE}).
\end{proof}

\begin{tw}
\label{tw2.2} Assume \mbox{\rm(H1)--(H4)}. Then there exists a
unique solution $u$ of $\mbox{\rm{(\ref{eq2.1})}}$. Moreover,  for
q.e. $z\in E_{0, T}$ there exists a unique solution $(Y^{z},
M^{z})$ of $\mbox{\rm{BSDE}}_{z}(\varphi,f+d\mu)$. In fact,
\[
Y_{t}^{z}=u(\bX_{t}), \quad t\in[0, \zeta_{\tau}],
\]
\[
M_{t}^{z}=E_{z}\Big(\mathbf{1}_{\{\zeta>T-\tau(0)\}}\varphi(\bX_{T-\tau(0)})
+\int_{0}^{\zeta_{\tau}}f_{u}(\bX_{r})\,dr
+\int_{0}^{\zeta_{\tau}}dA_{r}^{\mu}|\FF_{t}\Big)-u(\bX_{0}).
\]
\end{tw}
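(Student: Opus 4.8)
The plan is to build $u$ from the family of backward equations furnished by Proposition \ref{stw2.1} and then reverse the usual Markovian identification to check that $u$ solves (\ref{eq2.9}). Fix a properly exceptional set $N$ outside of which the pair $(Y,M)$ of Proposition \ref{stw2.1} is defined, agrees with $(Y^z,M^z)$ on $[0,T-\tau(0)]$ $P_z$-a.s., and obeys the flow identity of Lemma \ref{lm2.1}, and define $u(z)=Y^z_0$ for $z\in E_{0,T}\setminus N$. Since $Y^z_0$ is $\FF_0$-measurable, it is $P_z$-a.s.\ a constant, so $u$ is a Borel function, and---as is shown next---$Y^z_t=u(\bX_t)$ for $t\in[0,\zeta_\tau]$, $P_z$-a.s.\ for q.e.\ $z$. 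Granting this, $f(\bX_r,Y^z_r)=f(\bX_r,u(\bX_r))=f_u(\bX_r)$, and the a priori estimate of Theorem \ref{tw2.1} applied under $P_z$ gives $E_z\int_0^{\zeta_\tau}|f_u|(\bX_r)\,dr<\infty$ for q.e.\ $z$; since $f_u\in\BB(E_{0,T})$ and $f_u\cdot m_1\ll m_1$, one checks, exactly as in the discussion of smooth measures in Section \ref{sec2}, that $f_u\cdot m_1$ is smooth, whence $f_u\in\RR(E_{0,T})$.

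For the identification, fix $z\notin N$ and $h\in[0,T-\tau(0)]$. Lemma \ref{lm2.1} with $t=0$ gives $Y_h=Y_0\circ\theta_h$ $P_z$-a.s. Since $Y_0$ equals, up to a $P_w$-null set, the constant $u(w)$ on paths issued from $w$, and since $\bX_h\notin N$ $P_z$-a.s.\ (as $N$ is properly exceptional), the Markov property yields $Y_0\circ\theta_h=u(\bX_h)$ $P_z$-a.s.; hence $Y^z_h=Y_h=u(\bX_h)$ for all $h\in[0,\zeta_\tau]$ $P_z$-a.s. Substituting $Y^z_r=u(\bX_r)$ into $\mathrm{BSDE}_z(\varphi,f+d\mu)$ and taking $E_z$ at $t=0$---the martingale term dropping out because $M^z_0=0$ and $M^z$ is a genuine martingale with integrable terminal value, which is part of what Theorem \ref{tw2.1} provides together with $u(\bX)$ being of class (D)---yields exactly (\ref{eq2.9}); the stated formula for $M^z$ then follows by taking $E_z(\,\cdot\,|\,\FF_t)$ in the same equation, while that for $Y^z$ is what has just been proved.

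It remains to prove uniqueness. Let $u'$ be another solution of (\ref{eq2.1}); then $f_{u'}\in\RR(E_{0,T})$ by definition and $u'$ satisfies (\ref{eq2.9}) with $f$ replaced by $f_{u'}$, so $u'$ is a Feynman-Kac functional of the type treated in Proposition \ref{stw1.1}, with data $\varphi$ and $f_{u'}\cdot m_1+\mu$ (after splitting into positive and negative parts and using the remark following Proposition \ref{stw1.1}, which transfers its assertions to $E_{0,T}$). Arguing as in that proof, $u'(\bX)$ is quasi-c\`adl\`ag, of class (D), satisfies $t\mapsto f(\bX_t,u'(\bX_t))\in L^1$ $P_z$-a.s., and admits a MAF $M'$ for which $(u'(\bX),M')$ solves $\mathrm{BSDE}_z(\varphi,f+d\mu)$ for q.e.\ $z$. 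Extracting the class (D) property and the martingale decomposition of $u'(\bX)$ purely from $\mu,f_{u'}\in\RR(E_{0,T})$ and the strong Markov property is the main technical obstacle here. Once it is in place, the uniqueness statement in Proposition \ref{stw2.1} forces $u'(\bX_t)=Y^z_t=u(\bX_t)$ $P_z$-a.s.\ for q.e.\ $z$, and evaluating at $t=0$ gives $u'=u$ q.e.
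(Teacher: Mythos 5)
Your proposal is correct and follows essentially the same route as the paper: define $u$ from $Y_0$, use Lemma \ref{lm2.1} together with the Markov property to identify $Y^z_t=u(\bX_t)$, read off (\ref{eq2.9}) and the formula for $M^z$ from the BSDE, and obtain uniqueness by turning any other solution into a solution of $\mathrm{BSDE}_z(\varphi,f+d\mu)$ (via the strong Markov property, i.e.\ Proposition \ref{stw1.1}(ii)) and invoking the uniqueness part of Proposition \ref{stw2.1}. The step you flag as the "main technical obstacle" is exactly what the paper handles by the strong Markov property and \cite[Lemma A.3.5]{Fukushima}, so there is no real gap.
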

\begin{dow}
By Proposition \ref{stw2.1} for q.e. $z\in E_{0, T}$ there exists
a unique solution $(Y^{z}, M^{z})$ of BSDE$_{z}(\varphi,f+d\mu)$.
Let $(Y,M)$ be the pair of Proposition \ref{stw2.1} and let
$u(z)=E_{z}Y_{0}$. Then by Lemma \ref{lm2.1} and the strong Markov
property,
\[
u(\bX_{t})=E_{\bX_{t}}Y_{0}= E_{z}(Y_{0}\circ\theta_{t}|\FF_{t})
=E_{z}(Y_{t}|\FF_{t})=Y_{t}
\]
for every $t\in[0,T-\tau(0)]$. From this, (H3), Theorem
\ref{tw2.2} and the definition of a solution of
BSDE$_{z}(\varphi,f+d\mu)$ we deduce that (\ref{eq2.9}) is
satisfied for q.e. $z\in E_{0,T}$, i.e. $u$ is a solution of
(\ref{eq2.1}). By Proposition \ref{stw1.1}, $u$ is
quasi-c\`adl\`ag. Therefore $Y_{t}=u(\bX_{t})$, $t\in[0,
\zeta_{\tau}]$. From this and the definition of a solution of
BSDE$_{z}(\varphi,f+d\mu)$ the representation formula for $M^{z}$
immediately follows. Suppose now that $v$ is another solution of
(\ref{eq2.1}). Then by the strong Markov property,
\[
v(\bX_{t})=\mathbf{1}_{\{\zeta>T-\tau(0)\}}\varphi(\bX_{T-\tau(0)})
+\int_{t}^{\zeta_{\tau}}f_{v}(\bX_{r})\,dr
+\int_{t}^{\zeta_{\tau}}dA_{r}^{\mu}-\int_{t}^{\zeta_{\tau}}d\bar{M}_{r},
\]
where $\bar{M}$ is a c\`adl\`ag and independent of $z$ version of
the martingale $N^{z}$ given by
\[
N_{t}^{z}=E_{z}\Big(\mathbf{1}_{\{\zeta>T-\tau(0)\}}\varphi(\bX_{T-\tau(0)})
+\int_{0}^{\zeta_{\tau}}f_{v}(\bX_{r})\,dr+
\int_{0}^{\zeta_{\tau}}dA_{r}^{\mu}|\FF_{t}\Big)- v(\bX_{0})
\]
(Existence of such version follows from \cite[Lemma
A.3.5]{Fukushima}). We see that the pair $(v(\bX),\bar{M})$ is a
solution of BSDE$_{z}(\varphi, f+d\mu)$ for q.e. $z\in E_{0, T}$.
Consequently,  $u=v$ q.e. by Proposition \ref{stw2.1}.
\end{dow}

\begin{wn}
\label{wn4.444} Let assumptions of Theorem \ref{tw2.2} hold and
let  $u_{i}$ be  a solution of \mbox{\rm(\ref{eq2.1})} with
terminal condition $\varphi_{i}$, and right-hand side
$f_{i}+d\mu_{i}$, $i=1,2$. If $\varphi_{1}\le\varphi_{2}$
$m_{1}$-a.e., $\mu_{1}\le\mu_{2}$ and either $f_{1}$ satisfies
$\mbox{\rm{(H2)}}$ and $f_{1,u_{2}}\le f_{2,u_{2}}$ $m_{1}$-a.e.
or $f_{2}$ satisfies $\mbox{\rm{(H2)}}$ and $f_{1, u_{1}}\le f_{2,
u_{1}}$ $m_{1}$-a.e., then then $ u_{1}(z)\le u_{2}(z)$ for q.e.
$z\in E_{0, T}$.
\end{wn}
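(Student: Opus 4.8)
The plan is to use the probabilistic representation of both solutions furnished by Theorem \ref{tw2.2} and then a comparison principle for backward equations with monotone generator. By symmetry it is enough to treat the second alternative, i.e.\ the case when $f_2$ satisfies (H2) with some constant $\alpha$ and $f_{1,u_1}\le f_{2,u_1}$ $m_1$-a.e.; the first alternative follows after interchanging the roles of the two equations. Fix a properly exceptional set outside which all the conclusions of Theorem \ref{tw2.2} hold for both $u_1$ and $u_2$, and for $z$ in its complement set $Y^i_t=u_i(\bX_t)$, $t\in[0,\zeta_{\tau}]$, with $M^i$ the martingale of Theorem \ref{tw2.2}, so that $(Y^i,M^i)$ is the solution of $\mathrm{BSDE}_z(\varphi_i,f_i+d\mu_i)$ on $(\Omega,\FF,P_z)$.

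First I would convert the three $m_1$-a.e.\ hypotheses into pathwise inequalities holding $P_z$-a.s.\ for q.e.\ $z$. Since $\mu_2-\mu_1$ is a non-negative smooth measure and the Revuz correspondence is linear, $A^{\mu_2}-A^{\mu_1}=A^{\mu_2-\mu_1}$ is a positive additive functional, so $dA^{\mu_1}_r\le dA^{\mu_2}_r$. Because for q.e.\ $z$ one has $\int_0^{\zeta_{\tau}}\mathbf{1}_N(\bX_r)\,dr=0$ $P_z$-a.s.\ whenever $m_1(N)=0$, the inequality $f_1(\cdot,u_1(\cdot))\le f_2(\cdot,u_1(\cdot))$ $m_1$-a.e.\ gives $f_1(\bX_r,Y^1_r)\le f_2(\bX_r,Y^1_r)$ for Lebesgue-a.e.\ $r\in[0,\zeta_{\tau}]$, $P_z$-a.s.; and the same absolute-continuity fact that makes the terminal term in (\ref{eq2.9}) meaningful turns $\varphi_1\le\varphi_2$ $m_1$-a.e.\ into $\xi^1\le\xi^2$ $P_z$-a.s., where $\xi^i=\mathbf{1}_{\{\zeta>T-\tau(0)\}}\varphi_i(\bX_{T-\tau(0)})$. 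These are precisely the ingredients already used to make $\mathrm{BSDE}_z(\varphi,f+d\mu)$ well posed in Proposition \ref{stw2.1}, so they are available in the present setting.

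With these inequalities in hand I would carry out the comparison argument on $\delta Y=Y^1-Y^2$. Splitting the generator difference as
\[
f_1(\bX_r,Y^1_r)-f_2(\bX_r,Y^2_r)=\big(f_1(\bX_r,Y^1_r)-f_2(\bX_r,Y^1_r)\big)+\big(f_2(\bX_r,Y^1_r)-f_2(\bX_r,Y^2_r)\big),
\]
the first term is non-positive by the previous step, while by (H2) for $f_2$ the second term contributes only a one-sided Lipschitz term in $\delta Y_r$. Together with $\xi^1\le\xi^2$ and $dA^{\mu_1}_r\le dA^{\mu_2}_r$, this is exactly the situation of the comparison theorem for $L^1$-BSDEs with monotone generator (the monotone counterpart of \cite[Theorem 2.7]{KR:JFA} used to prove Theorem \ref{tw2.1}); equivalently, after the change of variable reducing to $\alpha=0$ one applies the Meyer--It\^o formula to $(\delta Y_r)^+$, discards the non-positive generator and finite-variation contributions, takes $P_z$-expectation to remove the martingale part, and uses that $\delta Y$ is of class (D) to get $E_z(\delta Y_t)^+=0$, hence $\delta Y_t\le 0$ for every $t\in[0,\zeta_{\tau}]$, $P_z$-a.s. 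Evaluating at $t=0$ and using $Y^i_0=u_i(z)$ $P_z$-a.s.\ yields $u_1(z)\le u_2(z)$ for q.e.\ $z\in E_{0,T}$. The only genuinely delicate step, as opposed to the routine BSDE estimates, is the transfer of $m_1$-a.e.\ information to pathwise statements in the second paragraph; everything after that is the classical monotone-BSDE comparison.
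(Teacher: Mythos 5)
Your proposal is correct and takes essentially the same route as the paper: the paper's proof simply combines the representation $u_i(\bX_t)=Y^{i,z}_t$ from Theorem \ref{tw2.2} with the BSDE comparison theorem \cite[Proposition 2.1]{KR:JFA}, whose content is exactly the monotone-generator comparison argument (pathwise transfer of the a.e.\ inequalities, splitting of the generator difference, Tanaka/Meyer--It\^o on $(\delta Y)^+$) that you spell out.
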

\begin{dow}
Follows immediately from Theorem \ref{tw2.2} and
\cite[Proposition 2.1]{KR:JFA}.
\end{dow}

\begin{stw}
\label{stw4.6} Let assumptions of Theorem \ref{tw2.2} hold,
$\varphi\in L^{1}(E;m)$, $\mu\in\MM_{0,b}(E_{0,T})$ and for some
$\gamma\ge0$ the form $\EE_{\gamma}$ has the dual Markov property.
Then if $u$ is a solution of \mbox{\rm(\ref{eq2.1})} then
$f_{u}\in L^{1}(E_{0,T};m_{1})$ and
\[
\|f_{u}\|_{L^{1}}\le C(\alpha, T, \gamma)
(\|\mu\|+\|\varphi\|_{L^{1}}+\|f(\cdot, 0)\|_{L^{1}}).
\]
\end{stw}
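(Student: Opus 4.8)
The strategy is to feed the probabilistic representation of $u$ from Theorem \ref{tw2.2} into the a priori $L^1$-estimate for BSDEs of Theorem \ref{tw2.1}, and then to convert the resulting pointwise inequality between $0$-order potentials into a bound on the total mass of $f_u\cdot m_1$ by imitating the proof of Proposition \ref{stw1.4}. By Theorem \ref{tw2.2}, for q.e.\ $z\in E_{0,T}$ we have $u(\bX_t)=Y^z_t$, where $(Y^z,M^z)$ solves $\mathrm{BSDE}_z(\varphi,f+d\mu)$, and, as in the proof of Proposition \ref{stw2.1}, the corresponding coefficients satisfy (A1)--(A4) under $P_z$ for q.e.\ $z$. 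Exactly as in the proof of Proposition \ref{stw1.1}, $\mathrm{BSDE}_z(\varphi,f+d\mu)$ may be rewritten as $\mathrm{BSDE}_z(0,f+d(A^\mu+A^\nu))$ with $\nu:=\delta_{\{T\}}\otimes\varphi\cdot m$ (using $A^\nu_t=\mathbf{1}_{\{\tau(0)<T\le\tau(0)+t\}}\varphi(\bX_{T-\tau(0)})$), so the a priori estimate of Theorem \ref{tw2.1}, applied on $[0,\zeta_\tau]$ under $P_z$ and combined with $u(\bX_t)=Y^z_t$, gives, for q.e.\ $z\in E_{0,T}$,
\[
E_z\!\int_0^{\zeta_\tau}\!|f_u(\bX_t)|\,dt\le C(\alpha,T)\Big(E_z\!\int_0^{\zeta_\tau}\!|f(\bX_t,0)|\,dt+E_z\!\int_0^{\zeta_\tau}d|A^{\mu+\nu}|_t\Big).
\]
Since $|A^{\mu+\nu}|_t\le A^{|\mu|}_t+A^{|\nu|}_t$ with $|\nu|=\delta_{\{T\}}\otimes|\varphi|\cdot m$, and since $f_u\in\mathcal R(E_{0,T})$ by definition of a solution (so that $|f_u|\cdot m_1$ is smooth), the Revuz correspondence turns this into
\[
R^{0,T}(|f_u|\cdot m_1)\le C(\alpha,T)\,R^{0,T}\lambda\quad\text{q.e.},\qquad \lambda:=|f(\cdot,0)|\cdot m_1+|\mu|+\delta_{\{T\}}\otimes|\varphi|\cdot m,
\]
and $\lambda$ is a finite non-negative smooth measure with $\|\lambda\|_{TV}=\|f(\cdot,0)\|_{L^1}+\|\mu\|+\|\varphi\|_{L^1}$ (using (H3), $\mu\in\MM_{0,b}(E_{0,T})$, $\varphi\in L^1(E;m)$, Remark \ref{uw3.7} and Proposition \ref{stw1.5}).

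To conclude I would apply Proposition \ref{stw1.4}, which converts such a pointwise domination of $0$-order potentials of non-negative smooth measures into an inequality of their total masses; since it is stated for a form with the dual Markov property while here only $\EE_\gamma$ has it, I first pass to $\EE_\gamma$. For $z\in E_{0,T}$ the integration defining $R^{0,T}$ runs over $t\in(0,\zeta_\tau]$ with $\zeta_\tau\le T-\tau(0)\le T$, so $e^{-\gamma T}R^{0,T}\rho\le R^{0,T}_\gamma\rho\le R^{0,T}\rho$ on $E_{0,T}$ for every non-negative smooth $\rho$; hence
\[
R^{0,T}_\gamma(|f_u|\cdot m_1)\le R^{0,T}(|f_u|\cdot m_1)\le C(\alpha,T)\,R^{0,T}\lambda\le R^{0,T}_\gamma\big(C(\alpha,T)e^{\gamma T}\lambda\big)\quad\text{q.e.}
\]
Now $R^{0,T}_\gamma$ is the $0$-order Revuz potential operator of the Hunt process associated with $\EE_\gamma$ (that is, $\mathbb{M}$ killed at rate $\gamma$), $\EE_\gamma$ is built from regular forms and has the dual Markov property, and $|f_u|\cdot m_1$ and $C(\alpha,T)e^{\gamma T}\lambda$ are non-negative smooth measures, so the proof of Proposition \ref{stw1.4} applies with $\EE$, $R^{0,T}$ replaced throughout by $\EE_\gamma$, $R^{0,T}_\gamma$ (the exhausting obstacle-problem potentials $u_n=\hat R^{0,T}_\gamma\delta_n\nearrow 1$ being $\le 1$ precisely because $\EE_\gamma$ is dual-Markovian). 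This yields $\|f_u\|_{L^1}=\|\,|f_u|\cdot m_1\|_{TV}\le C(\alpha,T)e^{\gamma T}\|\lambda\|_{TV}$, which is the asserted estimate with $C(\alpha,T,\gamma):=C(\alpha,T)e^{\gamma T}$, and in particular $f_u\in L^1(E_{0,T};m_1)$.

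The main obstacle is this last step, i.e.\ converting a pointwise domination of potentials into a comparison of total masses: for $\gamma=0$ it is exactly Proposition \ref{stw1.4}, while for general $\gamma$ one must check both the discounting comparison above and that the obstacle-problem argument of Proposition \ref{stw1.4} survives for $\EE_\gamma$ — which it does, since $\EE_\gamma$ inherits regularity from $\EE$ and $1$ is $\hat\LL_\gamma$-excessive under the dual Markov property. A secondary, purely bookkeeping, point is to apply the a priori BSDE estimate on the stochastic interval $[0,\zeta_\tau]$ and to recognize that the terminal datum $\varphi$ contributes exactly the $0$-order potential of $\delta_{\{T\}}\otimes\varphi\cdot m$.
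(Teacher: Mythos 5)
Your argument is correct, and it reaches the estimate by a genuinely different route at the decisive step. The paper also starts from the identification $Y^z_t=u(\bX_t)$ and the a priori bound of Theorem \ref{tw2.1}, but it first performs the exponential change of variables $(\tilde Y_t,\tilde M_t)=(e^{\gamma t}Y_t,e^{\gamma t}M_t)$ via It\^o's formula, so that the transformed generator picks up the term $-\gamma y$; the resulting bound therefore carries an extra summand $\gamma\|u\|_{L^1}$, which the paper then controls by invoking the $L^1$-estimate (\ref{eq3.16}) of Theorem \ref{tw3.8}, and the passage from the pointwise inequality between expectations to an inequality between total masses is again Proposition \ref{stw1.4} (applied, tersely, for the form $\EE_\gamma$, with the factors $e^{\gamma t}$ absorbed into the constant exactly as in your discounting comparison). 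You instead apply the BSDE estimate without the transformation, absorb the terminal datum into the additive functional $A^\nu$ with $\nu=\delta_{\{T\}}\otimes\varphi\cdot m$ (precisely the device of Proposition \ref{stw1.1} and Theorem \ref{tw1.1}), and handle the mismatch between $\EE$ and $\EE_\gamma$ by the two-sided bound $e^{-\gamma T}R^{0,T}\le R^{0,T}_\gamma\le R^{0,T}$ on the bounded horizon before applying Proposition \ref{stw1.4} for $\EE_\gamma$. What your route buys is the complete absence of the $\gamma\|u\|_{L^1}$ term, hence no appeal to Theorem \ref{tw3.8} and no need to disentangle the resulting pair of coupled inequalities (in the paper's version, Theorem \ref{tw3.8} must be applied to the linear problem with datum $\mu+f_u\cdot m$, whose mass is the very quantity being estimated); the price is that you must verify explicitly that the obstacle-problem argument of Proposition \ref{stw1.4} survives the replacement of $\EE$ by $\EE_\gamma$, which you do and which is needed for the paper's proof as well. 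Your identification of the constant as $C(\alpha,T)e^{\gamma T}$ and the observation that $f(\cdot,\cdot,0)\in L^1$ is implicitly required for the conclusion are both consistent with the paper's statement.
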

\begin{dow}
Let $(Y,M)$ be as in Proposition \ref{stw2.1} and let
$(\tilde{Y}_{t},\tilde{M}_{t})=(e^{\gamma t}Y_{t},e^{\gamma
t}M_{t})$, $t\in [0,\zeta_{\tau}]$. Applying It\^o's formula shows
that $(\tilde{Y},\tilde{M})$ is a solution of
BSDE$_{z}(\tilde{\varphi},\tilde{f}+d\tilde{\mu})$ with
$\tilde{\varphi}(x)=e^{\gamma \zeta_{\tau}}$,
$\tilde{f}(t,x,y)=e^{\gamma t}f(t,x,y)-\gamma y$ and
$d\tilde{\mu}(t,x)=e^{\gamma t}\,d\mu(t,x)$. By  Theorem
\ref{tw2.1} applied to the pair $(\tilde{Y},\tilde{M})$,
\begin{align*}
E_{z}\int_{0}^{\zeta_{\tau}}e^{\gamma
t}|f(\mathbf{X}_{t},Y_{t})|\,dt &\le
C(\alpha,T)\Big(E_{z}\mathbf{1}_{\{\zeta>T-\tau(0)\}}
e^{\gamma\zeta_{\tau}}\varphi(\mathbf{X}_{T-\tau(0)})\\
&\quad+E_{z}\int_{0}^{\zeta_{\tau}}e^{\gamma
t}|f(\mathbf{X}_{t},0)|\,dt +\gamma
E_{z}\int_{0}^{\zeta_{\tau}}e^{\gamma t}|Y_{t}|\,dt\Big)
\end{align*}
for q.e. $z\in E_{0,T}$. Since $Y_{t}=u(\mathbf{X}_{t})$, $t\in
[0,\zeta_{\tau}]$, $P_{z}$-a.s. for q.e. $z\in E_{0,T}$ by Theorem
\ref{tw2.2} and $\EE_{\gamma}$ has the dual Markov property, it
follows from the above inequality and Proposition \ref{stw1.4}
that
\[
\|f_{u}\|_{L^1}\le C(\alpha,T)(\|\varphi\|_{L^1}
+\|f(\cdot,0)\|_{L^1}+\gamma\|u\|_{L^1}).
\]
From this and  Theorem \ref{tw3.8} we get the desired inequality.
\end{dow}

\begin{wn}
\label{wn4.7} Let assumptions of Proposition \ref{stw4.6} hold. If
$u$ is a solution  of \mbox{\rm(\ref{eq2.1})} then $u\in
L^{1}(E_{0,T},m_1)$ and $T_{k}(u)\in\FF_{0,T}$ for $k\ge0$.
Moreover, \mbox{\rm(\ref{eq3.15})} and \mbox{\rm(\ref{eq3.16})}
hold true with $\mu$ replaced by $\mu+f(\cdot,0)\cdot m$.
\end{wn}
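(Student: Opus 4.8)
The plan is to obtain the result by a direct combination of Proposition \ref{stw4.6} with Theorem \ref{tw3.8}. Recall that, by the definition of a solution of \mbox{\rm(\ref{eq2.1})}, the function $u$ satisfies \mbox{\rm(\ref{eq2.9})}, i.e. it is precisely the function given by \mbox{\rm(\ref{eq1.0})} with $\mu$ replaced by $\tilde\mu:=f_u\cdot m+\mu$, and $f_u\in\mathcal{R}(E_{0,T})$. So the strategy is: first show $\tilde\mu\in\MM_{0,b}(E_{0,T})$ with total variation controlled by the data, then apply Theorem \ref{tw3.8} to the pair $(\varphi,\tilde\mu)$, and finally rewrite the resulting bounds in the form claimed.

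For the first step I would invoke Proposition \ref{stw4.6}, whose hypotheses are exactly the standing assumptions here ($\varphi\in L^1(E;m)$, $\mu\in\MM_{0,b}(E_{0,T})$, and $\EE_\gamma$ has the dual Markov property for some $\gamma\ge0$, together with (H1)--(H4)). It gives $f_u\in L^1(E_{0,T};m_1)$ with
\[
\|f_u\|_{L^1}\le C(\alpha,T,\gamma)\big(\|\mu\|+\|\varphi\|_{L^1}+\|f(\cdot,0)\|_{L^1}\big),
\]
hence $\tilde\mu\in\MM_{0,b}(E_{0,T})$ and $\|\tilde\mu\|\le\|\mu\|+\|f_u\|_{L^1}$. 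I would also note that we may assume $\gamma\ge\alpha_0$: the dual Markov property of $\EE_\gamma$ is inherited by $\EE_{\gamma'}$ for every $\gamma'\ge\gamma$, since $e^{-\gamma't}\hat T_t^{0,T}f=e^{-(\gamma'-\gamma)t}\,(e^{-\gamma t}\hat T_t^{0,T}f)$ shows that $e^{-\gamma' t}\hat T_t^{0,T}$ is again a contraction on $L^1(E_{0,T};m_1)$, so we may replace $\gamma$ by $\gamma\vee\alpha_0$. Thus all the hypotheses of Theorem \ref{tw3.8} are met for the data $(\varphi,\tilde\mu)$.

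Applying Theorem \ref{tw3.8} to $(\varphi,\tilde\mu)$ then yields $u\in L^1(E_{0,T};m_1)$, $T_k(u)\in\FF_{0,T}$ for every $k\ge0$, and inequalities \mbox{\rm(\ref{eq3.15})}, \mbox{\rm(\ref{eq3.16})} with $\mu$ replaced by $\tilde\mu$. To put the right-hand sides into the stated form, I would insert the bound on $\|\tilde\mu\|$ above: since
\[
\|\tilde\mu\|\le\|\mu\|+\|f_u\|_{L^1}\le C'(\alpha,T,\gamma)\big(\|\mu\|+\|\varphi\|_{L^1}+\|f(\cdot,0)\|_{L^1}\big),
\]
and $\|\mu\|+\|f(\cdot,0)\|_{L^1}$ is, up to the obvious triangle inequalities, the total variation of $\mu+f(\cdot,0)\cdot m$, the estimates \mbox{\rm(\ref{eq3.15})} and \mbox{\rm(\ref{eq3.16})} hold with $\mu$ replaced by $\mu+f(\cdot,0)\cdot m$ and the constants adjusted accordingly. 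I do not expect a genuine obstacle in this argument; the only point deserving care is that $\|f_u\|_{L^1}$, which a priori is expressed through the unknown $u$ itself, must be controlled purely by the data — and this is exactly what Proposition \ref{stw4.6} provides — so that the final inequalities are genuine a priori bounds depending only on $\varphi$, $\mu$ and $f(\cdot,0)$; verifying that $\gamma$ may be taken $\ge\alpha_0$ and tracking the harmless change of constants are the remaining routine details.
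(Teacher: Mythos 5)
Your proposal is correct and follows essentially the same route as the paper, which proves the corollary simply by combining Proposition \ref{stw4.6} (to get $f_u\in L^1$ with an a priori bound by the data) with Theorem \ref{tw3.8} applied to the measure $f_u\cdot m+\mu$. The extra details you supply (that the dual Markov property of $\EE_\gamma$ passes to $\EE_{\gamma'}$ for $\gamma'\ge\gamma$, so one may take $\gamma\ge\alpha_0$, and the bookkeeping of constants) are exactly the routine points the paper leaves implicit.
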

\begin{dow}
Follows from Theorem \ref{tw3.8} and Proposition \ref{stw4.6}.
\end{dow}

\subsubsection{Semi-Dirichlet forms satisfying the duality condition}

Let us recall that in Section \ref{sec4} we have defined a
solution in the sense of duality of linear equations. In the
semilinear case we adopt the following natural definition.

\begin{df}
Let $\varphi\in L^{1}(E; m)$, $\mu\in\mathcal{M}_{0, b}(E_{0, T})$
and assume that $\EE$ satisfies the dual condition $(\Delta)$. We
say that a measurable function $u:E_{0,T}\rightarrow\BR$ is a
solution of (\ref{eq2.1}) in the sense of duality if $f_{u}\in
L^{1}(E_{0,T};m_{1})$ and (\ref{eq3.22}) is satisfied with $\mu$
replaced by $f_{u}\cdot m_1+\mu$.
\end{df}

\begin{tw}
Assume \mbox{\rm{(H1)--(H4)}} and that there is $\gamma\ge 0$ such
that $\EE_{\gamma}$ has the dual Markov property. Then there
exists a unique solution of \mbox{\rm(\ref{eq2.1})} in the sense
of duality.
\end{tw}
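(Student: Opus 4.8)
The plan is to derive the theorem from Theorem \ref{tw2.2} and the linear theory of Section \ref{sec4}, using that under the present hypotheses the probabilistic solution concept (\ref{eq1.0}) is equivalent to the duality concept (\ref{eq3.22}) (Proposition \ref{stw1.6}, Corollary \ref{wn.dfk}). First note that $\EE$ satisfies the dual condition $(\Delta)$: this is immediate from Remark \ref{uw3.7}, since $\EE_\gamma$ has the dual Markov property. For existence, (H1)--(H4) and Theorem \ref{tw2.2} give a solution $u$ of (\ref{eq2.1}); in particular $f_u\in\mathcal{R}(E_{0,T})$ and $u$ satisfies (\ref{eq2.9}) for q.e.\ $z$. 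Since $\varphi\in L^1(E;m)$, $\mu\in\MM_{0,b}(E_{0,T})$ and $\EE_\gamma$ has the dual Markov property, Proposition \ref{stw4.6} yields $f_u\in L^1(E_{0,T};m_1)$, so $\nu:=f_u\cdot m_1+\mu\in\MM_{0,b}(E_{0,T})$. Equation (\ref{eq2.9}) then says precisely that $u$ equals the function defined by (\ref{eq1.0}) for the data $(\varphi,\nu)$, so Proposition \ref{stw1.6} (applicable because $(\Delta)$ holds) shows $u$ satisfies (\ref{eq3.22}) with $\mu$ replaced by $\nu$. Together with $f_u\in L^1(E_{0,T};m_1)$, this is exactly the assertion that $u$ solves (\ref{eq2.1}) in the sense of duality.

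For uniqueness, let $v$ be any solution of (\ref{eq2.1}) in the sense of duality, so $f_v\in L^1(E_{0,T};m_1)$ and $v$ satisfies (\ref{eq3.22}) with $\mu$ replaced by $\nu':=f_v\cdot m_1+\mu\in\MM_{0,b}(E_{0,T})$; thus $v$ solves the linear problem with data $(\varphi,\nu')$ in the sense of duality. By Corollary \ref{wn.dfk} there is an $m_1$-version $\tilde v$ of $v$ satisfying (\ref{eq1.0}) for the data $(\varphi,\nu')$. Since $\tilde v=v$ $m_1$-a.e.\ and $f$ is Borel, $f_{\tilde v}=f_v$ $m_1$-a.e., whence $f_{\tilde v}\cdot m_1=f_v\cdot m_1$; in particular the additive functionals $t\mapsto\int_0^t f_{\tilde v}(\bX_r)\,dr$ and $t\mapsto\int_0^t f_v(\bX_r)\,dr$ have the same Revuz measure and so coincide for q.e.\ starting point, which shows that $\tilde v$ satisfies (\ref{eq2.9}). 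Moreover $f_{\tilde v}\cdot m_1=f_v\cdot m_1\in\MM_{0,b}(E_{0,T})\subset\mathcal{R}(E_{0,T})$ by Proposition \ref{stw1.5}, so $f_{\tilde v}\in\mathcal{R}(E_{0,T})$ and $\tilde v$ is a solution of (\ref{eq2.1}) in the original sense. By the uniqueness assertion of Theorem \ref{tw2.2}, $\tilde v=u$ q.e., hence $v=u$ $m_1$-a.e., and uniqueness in the sense of duality follows.

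All the substantive analytic and probabilistic inputs are already in place, so the argument is mostly bookkeeping with the several definitions. The one step that I expect to require genuine care is in the uniqueness direction: passing from a merely measurable duality solution $v$ to a representative $\tilde v$ that actually solves the probabilistic equation (\ref{eq2.9}), and verifying that this change of representative does not affect the nonlinearity in the only relevant sense (equality of the Revuz measures $f_v\cdot m_1=f_{\tilde v}\cdot m_1$), so that Theorem \ref{tw2.2} may be applied. This is precisely where Corollary \ref{wn.dfk} and the Revuz correspondence enter, and is the point at which I would be most careful.
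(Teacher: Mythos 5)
Your argument is correct and follows essentially the same route as the paper, which derives existence from Theorem \ref{tw2.2} together with Corollaries \ref{wn.dfk} and \ref{wn4.7} (the latter resting on Proposition \ref{stw4.6}, which you invoke directly), and uniqueness from Corollary \ref{wn.dfk} combined with uniqueness of probabilistic solutions (the paper cites the comparison Corollary \ref{wn4.444} where you cite the uniqueness clause of Theorem \ref{tw2.2}; either suffices). Your explicit verification that replacing $v$ by its $m_1$-version $\tilde v$ does not change the additive functional $\int_0^{\cdot}f_v(\bX_r)\,dr$ is a point the paper leaves implicit, and you handle it correctly via the Revuz correspondence.
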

\begin{dow}
The existence part follows from Theorem \ref{tw2.2} and
Corollaries \ref{wn.dfk} and \ref{wn4.7}. The uniqueness follows
from Corollaries \ref{wn.dfk} and \ref{wn4.444}.
\end{dow}

\begin{prz}
Let $\alpha$ be a measurable function on $\BRD$ such that
$\alpha_1\le\alpha(x)\le\alpha_2$, $x\in\BRD$, for some constants
$0<\alpha_{1}\le\alpha_{2}<2$.  Let $L_{t}=L=\Delta^{\alpha(x)}$,
i.e. $L$ is a pseudodifferential operator such that
\[
Lu(x)=\int_{\BR^d}
e^{ix\xi}|\xi|^{\alpha(x)}\hat{u}(\xi)\,d\xi,\quad u\in
C^{\infty}_c(\BR^d).
\]
For $r>0$ set $\beta(r)=\sup_{|x-y|\le r}|\alpha(x)-\alpha(y)|$.
By \cite[Proposition 3.1]{SW}, if
\[
\int_0^1\frac{(\beta(r)|\log r|)^2}{r^{1+\alpha_2}}\,dr<\infty
\]
then the form  $B^{(t)}=B$ associated with $L$ is a regular
semi-Dirichlet form. It is known (see \cite{FU,SW}) that for
$u,v\in C_{c}^{\infty}(\BRD)$ the form $B$ is given by
\[
B(u,v)=-\int_{\BRD}\int_{\{z\neq 0\}}w(x)v(x)(u(x+z)-u(x)-\nabla
u(x)\cdot z\mathbf{1}_{\{|z|\le 1\}}(z))
|z|^{-d-\alpha(x)}\,dx\,dz,
\]
where
\[
w(x)=\alpha(x)2^{\alpha(x)-1}\frac{\Gamma(\frac12\alpha(x)
+\frac12 d)}{\pi^{d/2}\Gamma(1-\frac12\alpha(x))}.
\]
By \cite[Theorem 2.1]{SW},
\begin{equation}
\label{eq5.6} L^{*}u(x)=\Lambda u(x)+\kappa(x)u(x),\quad u\in
C_{c}^{\infty}(\BRD)
\end{equation}
for some measurable function $\kappa$ and some operator $\Lambda$
associated with a semi-Dirichlet form. By \cite[Remark 3.2]{SW},
under the additional condition that $\alpha\in C_{b}^2(\BRD)$ the
function $\kappa$ is bounded on $\BRD$. Therefore from
(\ref{eq5.6}) it follows that there exists $\gamma\ge 0$ such that
$B_{\gamma}$ has the dual Markov property, which implies that
$\EE_{\gamma}$ has the dual Markov property.
\end{prz}
{\bf Acknowledgements}
\medskip\\
Research supported by Polish NCN grant no. 2012/07/D/ST1/02107.

\end{document}